\theoremstyle{plain}
\newtheorem{theorem}{Theorem}[section]
\newtheorem*{theorem*}{Theorem}
\newtheorem*{maintheorem*}{Main Theorem}
\newtheorem{prop}[theorem]{Proposition}
\newtheorem{corollary}[theorem]{Corollary}
\newtheorem{lemma}[theorem]{Lemma}
\newtheorem*{conjecture*}{Conjecture}
\theoremstyle{definition}
\newtheorem{definition}[theorem]{Definition}
\newtheorem*{definition*}{Definition}
\newtheorem{example}[theorem]{Example}
\newtheorem*{example*}{Example}
\newtheorem*{notation*}{Notation}
\newtheorem*{notation-conv*}{Notation and convention}
\newtheorem*{convention*}{Convention}
\newtheorem{remark}[theorem]{Remark}
\newcommand{\Z}{{\mathbb Z}}
\newcommand{\C}{{\mathbb C}}
\newcommand{\Q}{{\mathbb Q}}
\newcommand{\R}{{\mathbb R}}
\begin{document}
\title{Decompositions of Abelian surface and quadratic forms} 
\author{Shouhei Ma \\
{\small Graduate School of Mathematical Sciences, University of Tokyo   } \\
{\small E-mail: sma@ms.u-tokyo.ac.jp } }
\date{}
\maketitle

\begin{abstract}
When a complex Abelian surface can be decomposed into a product of two elliptic curves,  
how many decompositions does the Abelian surface admit ? 
We provide arithmetic formulae for the number of decompositions of a complex Abelian surface. 
\footnote[0]{Keywords : Abelian surface, elliptic curve, binary quadratic form, Atkin-Lehner involution.} 
\footnote[0]{MSC 2000 : 14K02, 14H52, 11E16.}
\end{abstract}

\section{Introduction and main results}

Throughout this paper, 
an {\it Abelian surface\/} means a complex Abelian surface.  

Let $A$ be an Abelian surface which can be decomposed into a product of two elliptic curves.  
In general,  
the choice of a decomposition of $A$ is not unique even up to isomorphism.   
In the present paper we study the number of decompositions of $A$. 
For this problem, 
there are pioneering works of Hayashida \cite{Ha} and Shioda-Mitani \cite{S-M} :  
Let $\rho (A)$ be the Picard number of $A$   
and let $T_{A}$ be the transcendental lattice of $A$,  
which is the orthogonal complement of the  N\'eron-Severi lattice in $H^{2}(A, {\Z})$. 
When $\rho (A)=4$ and $T_{A}$ is primitive, 
Shioda and Mitani, with  the cooperation of Hirzebruch,   
expressed the number of decompositions of $A$    
in terms of the class number of a certain imaginary quadratic order determined by $A$.  
On the other hand, Hayashida calculated the number of decompositions when $\rho (A)=3$,  
in connection with the number of principal polarizations.

It is natural to expect counting formulae for the decompositions for all decomposable Abelian surface, 
which complete the works of Hayashida and Shioda-Mitani.    
The purpose of this paper is to give such counting formulae uniformly by a lattice-theoretic method.  
Firstly 
we give precise definitions.

\begin{definition}\label{def: basic}
Let $A$ be an Abelian surface.

$(1)$ 
A {\it decomposition\/} of $A$ 
is an ordered pair $(E_{1}, E_{2})$ of elliptic curves in $A$   
such that   
the natural homomorphism $E_{1}\times E_{2} \to A$ is an isomorphism. 
The Abelian surface $A$ is {\it decomposable\/} 
if there exists a decomposition of $A$.

$(2)$ 
Two decompositions $(E_{1}, E_{2})$ and $(F_{1}, F_{2})$ of $A$ are {\it strictly isomorphic\/}  
if $E_{1}\simeq F_{1}$ and $E_{2}\simeq F_{2}$, 
or equivalently  
if there exists an automorphism $f$ of $A$ such that $f(E_{i}) = F_{i}$. 
Two decompositions 
$(E_{1}, E_{2})$ and  $(F_{1}, F_{2})$ 
of $A$ 
are {\it isomorphic\/}  
if $(E_{1}, E_{2})$ is strictly isomorphic to $(F_{1}, F_{2})$ or to $(F_{2}, F_{1})$. 
\end{definition}

There are known several criterions 
for an Abelian surface to be decomposable.  
For example, 
Abelian surfaces with Picard number $4$ are always decomposable \cite{S-M}.  
Let ${\rm Dec\/} (A)$ (resp. $\widetilde{{\rm Dec\/}} (A)$) 
be the set of isomorphism (resp. strictly isomorphism) classes of decompositions of $A$,    
and put   
\begin{equation}\label{decomp numbers}
\delta (A) := | \: {\rm Dec\/} (A) \: |  
\; \; \;  \; \; \text{and} \; \; \;  \; \; 
\widetilde{\delta }(A) := | \: \widetilde{{\rm Dec\/}} (A) \: | . 
\end{equation}
The number $\delta (A)$ is regarded as the number of decompositions of $A$,  
while $\widetilde{\delta }(A)$ is  considered as the number of decompositions counted with multiplicity. 
If we define    
\begin{equation}\label{self decomp numbers}
\delta _{0}(A) := \Bigl| \:  \{ E : {\rm elliptic\/} \: {\rm curve\/}, \: \: E \times E \simeq A \} / \simeq \; \Bigr| , 
\end{equation}
an obvious relation 
\begin{equation}\label{relation of two decomp numbers}
\widetilde{\delta }(A) = 2\delta (A) - \delta _{0}(A)
\end{equation}
holds. 
Hence the knowledge of $\widetilde{\delta }(A)$   
in addition to that of $\delta (A)$  
would enable us to study the decompositions of $A$ more closely.  

We shall express the numbers 
$\delta (A)$  and  $\widetilde{\delta }(A)$  
in terms of the arithmetic of the transcendental lattice $T_{A}$. 
Let $\mathcal{G}(T_{A})$ be the genus of $T_{A}$, 
i.e., the set of isometry classes of lattices isogenus to $T_{A}$. 
For an even lattice $T$,   
let $D_{T}$ be the discriminant form of $T$, 
which is a finite quadratic form associated to $T$.  
We have a natural homomorphism 
$O(T) \to O(D_{T})$ between the isometry groups.  
For a natural number $n>1$,    
let $\tau (n)$ be the number of the prime divisors of $n$.     
We  put  $\tau (1) :=1$. 
Our formula for $\delta (A)$ is stated as follows.

\begin{theorem}\label{main1}
Let $A$ be a decomposable Abelian surface. 
Then $2\leq \rho (A) \leq 4$  and  
the decomposition number $\delta (A)$ is given as follows.

$(1)$ 
When $\rho (A) = 2$,  one has $\delta (A) = 1$. 

$(2)$ 
When $\rho (A) = 3$,  one has $\delta (A) = 2^{\tau (N)-1}$ where $2N = -{\rm det\/}(T_{A})$. 

$(3)$ 
When $\rho (A) = 4$ and 
$ T_{A} \not\simeq \begin{pmatrix}  2n & 0  \\
                                                                 0 & 2n \end{pmatrix}, 
                                                                 \begin{pmatrix}  2n & n  \\
                                                                                               n  &  2n   \end{pmatrix}, n>1$, 
one has  
\begin{equation*}\label{main formula 1}
\delta (A) =     \sum_{T\in \mathcal{G}(T_{A})}  | \: O(D_{T})/O(T) \: |.                     
\end{equation*}

$(4)$ 
When $\rho (A) = 4$ and 
$ T_{A} \simeq \begin{pmatrix}  2n & 0  \\
                                                                 0 & 2n \end{pmatrix}  \text{or} 
                                                                 \begin{pmatrix}  2n & n  \\
                                                                                               n  &  2n   \end{pmatrix}, n>1$, 
one has  
\begin{equation*}\label{main formula 2}
\delta (A) = 
                   \left\{ \begin{array}{cl} 
                             (2^{-4}+2^{-\tau (n)-3}) \cdot | O(D_{T_{A}}) |    
								                        &    \;        \text{if} \: \: T_{A} \simeq \begin{pmatrix} 2n & 0  \\
                                                                                                                                                                                           0 & 2n  \end{pmatrix}  ,                    \\
                             3^{-2} \cdot (2^{-2}+2^{-\tau(n)}) \cdot | O(D_{T_{A}}) |
								                        &    \;        \text{if} \: \: T_{A} \simeq \begin{pmatrix} 2n & n  \\
                                                                                                                                                                                          n   & 2n  \end{pmatrix},  n: \text{odd},    \\
                               3^{-2} \cdot (2^{-2}+2^{-\tau(2^{-1}n)}) \cdot | O(D_{T_{A}}) |
								                        &    \;        \text{if} \: \: T_{A} \simeq \begin{pmatrix} 2n & n  \\
                                                                                                                                                                                          n   & 2n  \end{pmatrix},  n: \text{even}.    \\
                               \end{array} \right. 
\end{equation*}
\end{theorem}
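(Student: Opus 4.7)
The plan is to translate counting decompositions into a lattice problem via the Hodge structure on $H^{2}(A,\Z)$, and then apply Nikulin's discriminant-form technology together with a genus enumeration. A decomposition $(E_{1},E_{2})$ is determined by the pair of classes $e_{i}=[E_{i}]\in NS(A)$, which satisfy $e_{i}^{2}=0$, $e_{1}\cdot e_{2}=1$, and are effective; conversely, any embedding $U\hookrightarrow NS(A)$ with effective generators produces a decomposition via the addition map $E_{1}\times E_{2}\to A$. Working modulo $\mathrm{Aut}(A)$, this identifies $\widetilde{\mathrm{Dec}}(A)$ with the set of orbits of such embeddings, and $\mathrm{Dec}(A)$ with the further quotient by swapping $e_{1}$ and $e_{2}$. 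The bound $\rho(A)\leq 4$ is Lefschetz and $\rho(A)\geq 2$ comes from the two independent elliptic classes. For $\rho(A)=2$ one has $NS(A)\simeq U$, so the embedding is unique up to isometry and $\delta(A)=1$. For $\rho(A)=3$ the two elliptic curves in any decomposition are isogenous non-CM, and enumerating hyperbolic-plane embeddings in the rank-$3$ lattice $NS(A)$ reduces, via a Gauss-type argument on binary forms, to the factorization count $2^{\tau(N)-1}$, in the spirit of Hayashida.

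The heart of the proof is the case $\rho(A)=4$, where $T_{A}$ has rank $2$ and both elliptic factors are CM. Because $H^{2}(A,\Z)$ is even unimodular and $NS(A)=T_{A}^{\perp}$, Nikulin's gluing theory applies: given an embedding $U\hookrightarrow NS(A)$, the orthogonal complement $T$ sits in the genus $\mathcal{G}(T_{A})$, and the identification $D_{T}\simeq -D_{T_{A}}$ coming from the unimodular overlattice yields a gluing class in $O(D_{T})/O(T)$. Conversely, each pair $(T,\psi)$ with $T\in\mathcal{G}(T_{A})$ and $\psi\in O(D_{T})/O(T)$ reconstructs an embedding, and one checks that two pairs give the same orbit iff they coincide. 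This bijection produces the formula $\delta(A)=\sum_{T\in\mathcal{G}(T_{A})}|O(D_{T})/O(T)|$, which is case (3).

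The main obstacle, and where the bulk of the work lies, is case (4). When $T_{A}$ is the diagonal form $\mathrm{diag}(2n,2n)$ or the hexagonal form $\bigl(\begin{smallmatrix} 2n & n \\ n & 2n \end{smallmatrix}\bigr)$ with $n>1$, the group $O(T_{A})$ carries extra symmetries (of orders $8$ and $12$, respectively), reflecting $A\simeq E\times E$ for a CM elliptic curve with enlarged endomorphism ring. These additional automorphisms cause the parameterization by $(T,\psi)$ to overcount, and the clean genus sum must be replaced by a Burnside-type count that weights each pair by the inverse of its stabilizer order. The technical core is computing these stabilizers explicitly: one must identify which elements of $O(T_{A})$ fix each hyperbolic-plane embedding, and extract the dependence on $\tau(n)$ or $\tau(n/2)$ from the prime factorization of $n$. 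Assembling these local counts yields the correction factors $2^{-4}+2^{-\tau(n)-3}$ and $3^{-2}(2^{-2}+2^{-\tau(n)})$, with the parity split for $n$ in the hexagonal case arising from the different $2$-adic structures of $D_{T_{A}}$. This careful orbit bookkeeping is the principal difficulty of the proof.
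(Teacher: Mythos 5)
Your overall strategy is the same as the paper's: identify decompositions with embeddings $U\hookrightarrow NS_{A}$ modulo a suitable group of isometries, count the orbits genus-by-genus using Nikulin's discriminant-form machinery, and treat the two exceptional forms separately. But as written there are genuine gaps. The identification of $\widetilde{{\rm Dec\/}}(A)$ and ${\rm Dec\/}(A)$ with orbit sets is asserted ``modulo ${\rm Aut\/}(A)$'' without the ingredient that makes this computable: Shioda's Torelli theorem, which is what tells you exactly which isometries of $NS_{A}$ are induced by automorphisms of $A$ (those extending to Hodge isometries of $H^{2}(A,\Z)$ of determinant $1$, the determinant $-1$ ones being absorbed by the swap $\iota_{0}$). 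Without this, the group that actually acts, namely $\Gamma_{A}=r_{NS}^{-1}\bigl(\lambda\circ r_{T}(O_{Hodge}(T_{A}))\bigr)$, never appears, and the orbit counts in (2)--(4) cannot be pinned down. Likewise the surjectivity direction (every $U$-embedding comes from a decomposition) needs the fact that a primitive isotropic class in the closure of the positive cone is the class of an elliptic curve, which you do not invoke.

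Moreover the cases are described rather than computed. For $\rho(A)=3$ you appeal to ``a Gauss-type argument in the spirit of Hayashida''; the actual derivation is $\delta(A)=\bigl|\{\pm{\rm id\/}\}\backslash O(D_{\langle-2N\rangle})/\{\pm{\rm id\/}\}\bigr|=2^{\tau(N)-1}$, which requires knowing $O_{Hodge}(T_{A})=\{\pm{\rm id\/}\}$ for rank-$3$ transcendental lattices and $|O(D_{\langle-2N\rangle})|=2^{\tau(N)}$. For case (4), which you yourself call the principal difficulty, you only announce a ``Burnside-type count'' and then quote the answer; the real content is the double-coset computation: the image of $SO(T_{A})$ in $O(D_{T_{A}})$ is the cyclic group generated by the rotation $\gamma_{1}$ (resp.\ $\sigma_{1}$), conjugation gives $\gamma^{-1}\gamma_{1}\gamma=\det(\gamma)\,\gamma_{1}$, so each double coset has size $4$ or $8$ (resp.\ $6$ or $18$) according to whether $\det(\gamma)=\pm1$, and the determinant homomorphism onto $(\Z/2\Z)^{\tau(n)}$ (resp.\ onto $(\Z/2\Z)^{\tau(n)}$ or $(\Z/2\Z)^{\tau(2^{-1}n)}$) counts how many elements fall into each case; this, together with $\delta(A)=\tfrac12\widetilde{\delta}(A)$ in these cases, is what produces the factors $2^{-4}+2^{-\tau(n)-3}$ and $3^{-2}(2^{-2}+2^{-\tau(n)})$. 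Quoting those factors as the outcome of unspecified ``local counts'' assumes the conclusion. Finally, in case (3) your bijection onto pairs $(T,\psi)$ with $\psi\in O(D_{T})/O(T)$ silently uses that the image of $SO(T_{A})=\{\pm{\rm id\/}\}$ on the discriminant is absorbed into the image of $O(T)$; that is true, but it is exactly the point that fails in case (4), so it needs to be stated and justified rather than left implicit.
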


On the other hand, 
the number $\widetilde{\delta }(A)$ is expressed in slightly different way  
(in the case of  $\rho (A) =4$). 
The set of  proper equivalence classes of {\it oriented\/} lattices isogenus to $T_{A}$
is denoted by $\widetilde{\mathcal{G}}(T_{A})$.

\begin{theorem}\label{main2}
Let $A$ be a decomposable Abelian surface.  
The strict decomposition number $\widetilde{\delta} (A)$ is given as follows.

$(1)$ 
If $\rho (A) = 2$,  then  $\widetilde{\delta }(A) = 2$. 
  
$(2)$ 
If $\rho (A) = 3$,  then   
\begin{equation*}\label{main formula for Picard number 3}
\widetilde{\delta }(A) = 
                                      \left\{ \begin{array}{cc} 
                                                 1,                         &    \;  N=1, \\ 
                                                 2^{\tau (N)},   &    \; N>1, 
                                                \end{array} \right. 
\end{equation*}
where $2N = -{\rm det\/}(T_{A})$. 

$(3)$ 
If $\rho (A) = 4$ and 
$ T_{A} \not\simeq \begin{pmatrix}  2n & 0  \\
                                                                 0 & 2n \end{pmatrix}, 
                                                                 \begin{pmatrix}  2n & n  \\
                                                                                               n  &  2n   \end{pmatrix}, n>1$, 
then   
\begin{equation*}\label{main formula for Picard number 4, wide}
\widetilde{\delta }(A) =  2^{-1} \cdot |  \widetilde{\mathcal{G}}(T_{A})  | \cdot | O(D_{T_{A}}) | .  
\end{equation*}     

$(4)$ 
If $\rho (A) = 4$ and 
$ T_{A} \simeq \begin{pmatrix}  2n & 0  \\
                                                                 0 & 2n \end{pmatrix}  \text{or} 
                                                                 \begin{pmatrix}  2n & n  \\
                                                                                               n  &  2n   \end{pmatrix}, n>1$, 
then $\widetilde{\delta }(A)  =  2\delta (A)$.  
\end{theorem}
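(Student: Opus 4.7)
My plan is to exploit the elementary relation $\widetilde\delta(A)=2\delta(A)-\delta_0(A)$ from (1.3) together with Theorem 1.1. This reduces the proof to two tasks: determining $\delta_0(A)$ in each of the four cases, and, in case (3) alone, converting the sum over $\mathcal G(T_A)$ appearing in Theorem 1.1(3) into an expression in the oriented genus $\widetilde{\mathcal G}(T_A)$.

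For the computation of $\delta_0(A)$, the starting observation is that $E\times E$ contains the diagonal as a third divisor class independent of the two factors, so $\rho(E\times E)\ge 3$. This rules out $A\simeq E\times E$ in case (1), giving $\delta_0(A)=0$ and therefore $\widetilde\delta(A)=2$. In case (2), $\rho(A)=3$ forces any $E$ with $A\simeq E\times E$ to be non-CM; computing the intersection form on $NS(E\times E)=\Z[E\times 0]+\Z[0\times E]+\Z[\Delta]$ and comparing discriminants shows that this happens exactly when $-\det T_A=2$, i.e.\ $N=1$. Substituting the resulting values of $\delta_0(A)$ into (1.3) produces the two displayed formulas. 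In case (4), the equality $\widetilde\delta(A)=2\delta(A)$ is equivalent to $\delta_0(A)=0$, and I would establish this via a Shioda--Mitani-type classification of transcendental lattices of self-products of CM elliptic curves, verifying that neither of the two exceptional shapes with $n>1$ is realized as $T_{E\times E}$.

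Case (3) is the technical heart. Starting from Theorem 1.1(3) and using that $D_T\cong D_{T_A}$ throughout a genus (so that $|O(D_T)|=|O(D_{T_A})|$ is constant and factors out of the sum), what remains is the identity
\[
4\sum_{T\in\mathcal G(T_A)}\frac{1}{|\mathrm{im}(O(T)\to O(D_T))|}\;=\;|\widetilde{\mathcal G}(T_A)|.
\]
This reflects the interplay between the forgetful map $\widetilde{\mathcal G}(T_A)\to\mathcal G(T_A)$, whose fibre at $T$ has size $2/[O(T):SO(T)]$, and the behavior of orientation-reversing isometries under the projection to $O(D_T)$. Combined with $\delta_0(A)=0$ (automatic in case (3), since the excluded shapes are precisely those of case (4)), this delivers $\widetilde\delta(A)=\tfrac{1}{2}|\widetilde{\mathcal G}(T_A)|\cdot|O(D_{T_A})|$.

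The principal obstacle I expect lies in case (4): rigorously showing that the two exceptional families $\begin{pmatrix}2n&0\\0&2n\end{pmatrix}$ and $\begin{pmatrix}2n&n\\n&2n\end{pmatrix}$ with $n>1$ never arise as $T_{E\times E}$ requires a complete enumeration of such lattices over all CM orders via a Shioda--Mitani analysis. A secondary but nontrivial obstacle is the bookkeeping identity used in case (3), which demands a uniform understanding of the kernel and cokernel of $O(T)\to O(D_T)$ as $T$ ranges over the genus, and of how orientation reversers in $O(T)$ descend to $O(D_T)$.
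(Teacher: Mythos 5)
The decisive problem is in your case (3). Your parenthetical claim that $\delta_0(A)=0$ is ``automatic in case (3), since the excluded shapes are precisely those of case (4)'' is false: the shapes excluded in (3) are exactly those with $SO(T_A)\neq\{\pm\mathrm{id}\}$, but self-products $E\times E$ occur well outside them. For instance $A=E(\sqrt{-2})\times E(\sqrt{-2})$ has $T_A\simeq\begin{pmatrix}2&0\\0&4\end{pmatrix}$, which falls under case (3), and $\delta_0(A)=1$; more generally the paper shows (Proposition \ref{Pic number 4 generic}, Corollary \ref{self-product}) that $\delta_0(A)\neq 0$ exactly when $\mathcal{G}(T_A)$ contains $\begin{pmatrix}2&0\\0&2c\end{pmatrix}$ or $\begin{pmatrix}2&1\\1&2c\end{pmatrix}$. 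Correspondingly, the bookkeeping identity you plan to prove, $4\sum_{T\in\mathcal{G}(T_A)}|\mathrm{im}(O(T)\to O(D_T))|^{-1}=|\widetilde{\mathcal{G}}(T_A)|$, is false in exactly those genera: a class with $O(T)=SO(T)$ contributes $2$ to each side, an ambiguous class whose image in $O(D_T)$ has order $4$ contributes $1$ to each side, but an ambiguous class whose image is only $\{\pm\mathrm{id}\}$ contributes $2$ to the left and $1$ to the right. In the example above ($|\mathcal{G}(T_A)|=1$, $|O(D_{T_A})|=2$, so the image is forced to be $\{\pm\mathrm{id}\}$) the left side is $2$ and the right side is $1$. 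Your two errors cancel in the end—the defect of the identity equals $\delta_0(A)$ up to the factor $\tfrac12|O(D_{T_A})|$—so the final formula is correct, but the proof as structured cannot be completed, because the intermediate statement you intend to establish is not true.

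The repair along your route is to split the ambiguous classes according to whether $r_T(O(T))=\{\pm\mathrm{id}\}$ or is strictly larger (this is precisely the analysis the paper carries out to compute $\delta_0(A)$, via the classification of ambiguous binary forms); then $\widetilde\delta(A)=2\delta(A)-\delta_0(A)$ does yield case (3). The paper itself avoids $\delta_0$ entirely: it proves a second weak formula directly for $\widetilde\delta$, namely $\widetilde\delta(A)=\sum_{T\in\widetilde{\mathcal{G}}(T_A)}\bigl|\,SO(T_A)\backslash O(D_{T_A})/SO(T)\,\bigr|$ (Propositions \ref{bijective widetilde}, \ref{weak formula 2}), by redoing the Torelli argument with $S\Gamma_A=\Gamma_A\cap SO(NS_A)$ and using $O_{Hodge}(T_A)=SO(T_A)$ in rank $2$; case (3) is then immediate because $SO(T)=\{\pm\mathrm{id}\}$ is central in $O(D_{T_A})$, and case (4) comes from computing $\delta$ and $\widetilde\delta$ as double-coset counts (Lemmas \ref{normality 1}, \ref{normality 2}). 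Two smaller points: in case (2) your discriminant comparison only gives the direction $A\simeq E\times E\Rightarrow N=1$; for $N=1\Rightarrow\delta_0(A)=1$ you still need the minimal-isogeny-degree interpretation of $N$ (Proposition \ref{geometric meaning of N}) or an equivalent argument, which is how the paper gets $\widetilde\delta=\delta$ for $N=1$ and $2\delta$ for $N>1$. Your case (4) plan—showing via Shioda--Mitani that $T_{E\times E}$ is always primitive, hence never one of the imprimitive exceptional shapes with $n>1$—is sound once Theorem \ref{main1}(4) is granted, and is a more geometric alternative to the paper's double-coset computation.
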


The number $| O(D_{T_{A}}) |$ appearing in the case of $\rho (A)=4$ 
is calculated explicitly in Section $\ref{section 7}$. 
On the other hand, the numbers $| \widetilde{\mathcal{G}}(T_{A})  |$ and  $| \mathcal{G}(T_{A})  |$ 
are rather deep and classical quantities.  
The reader may consult to \cite{Co} 
for the calculations of these quantities.

When $\rho (A)=4$ and $T_{A}$ is primitive, 
we have two types of formulae for $\delta (A)$ (or for $\widetilde{\delta}(A)$) : 
Shioda and Mitani's ideal-theoretic formula (\cite{S-M}, see Theorem \ref{S-M}) and our lattice-theoretic formula. 
These two  formulae are unified by 
the classical relation between primitive binary forms and quadratic fields. 
In particular, 
the comparison of two formulae will lead to 
an expression of the number of genera in a class group 
in terms of the discriminant form (Corollary \ref{expression of class number}).


The counting formula in the case of $\rho (A)=3$ is known to Hayashida \cite{Ha}, 
who calculated $\delta (A)$ as the number of reducible principal polarizations.  
The number $N$ is defined in a different way in \cite{Ha}.  
Given a decomposition $(E_{1}, E_{2})$ of $A$ with $\rho (A)=3$,   
we will determine explicitly all other members of $\widetilde{{\rm Dec\/}} (A)$   
from $(E_{1}, E_{2})$.  
It turns out that the periods of the members of $\widetilde{{\rm Dec\/}} (A)$,  
defined as points of the modular curve $\Gamma _{0}(N) \backslash {\mathbb H}$,  
can be transformed to each other by the  action of the Atkin-Lehner involutions on $\Gamma _{0}(N) \backslash {\mathbb H}$ 
(Proposition \ref{geometric meaning of A-L involution}).

The rest of the paper is organized as follows. 
In Sect.$3$, 
we prove general  formulae for  $\delta (A)$ and $\widetilde{\delta }(A)$.   
Here 
Shioda's Torelli theorem for Abelian surfaces \cite{Sh} 
and the technique of discriminant form developed by Nikulin \cite{Ni} 
are applied.    
These weak formulae will be analyzed in more detail for each Picard number. 
The case of Picard number $2$ is well-known,   
and can also be derived immediately from the weak formula. 
The case of Picard number $3$ is treated in Sect.$4$,   
and  the case of Picard number $4$ is studied in Sect.$5$. 
In Sect.\ref{section 7}
we will calculate the order of  the isometry group $O(D_{L})$  for a rank $2$ even lattice $L$.  
This part is purely algebraic and may be read independently.

\section{Conventions}

Let $A$ be an Abelian surface. 
The N\'eron-Severi (resp. transcendental) lattice of $A$ 
is denoted by $NS_{A}$ (resp. $T_{A}$). 
The Picard number  of $A$ is denoted by $\rho (A)$.  
For a curve $C \subset A$ its class in $NS_{A}$ is written as $[ C]$. 
The {\it positive cone\/} $\mathcal{C}_{A}^{+}$ of $A$ is 
the connected component of 
the open set $\{ x \in NS_{A}\otimes {\R}, \; (x, x)>0 \} $ 
containing ample classes. 
The group of Hodge isometries of  
$T_{A}$ is denoted by 
$O_{Hodge}(T_{A})$.

Let $L$ be an  even lattice, 
i.e.,  a free ${\Z}$-module of finite rank 
equipped with a non-degenerate integral symmetric bilinear form $( , )$ 
satisfying $(l, l) \in 2{\Z}$ for all $l \in L$. 
The isometry group of $L$ 
is denoted by $O(L)$. 
Let $SO(L) := \{ \gamma \in O(L), \: {\rm det\/}(\gamma ) =1 \}$, 
which is of index at most $2$ in $O(L)$.  
For  an integer $n \in {\Z}$, 
$L(n)$ denotes the lattice $(L, n( , ))$. 
An even lattice $L$ is {\it primitive\/} 
if $L \not\simeq L'(n)$ for any even lattice $L'$ and $n>1$. 
On the other hand, 
a sublattice $M$ (resp. a vector $l$) of $L$ 
is said to be {\it primitive\/} if $L/M$ (resp. $L/{\Z}l$) is free.  
In the rest of the paper,  
the distinction between these two notions of primitivity 
will be clear from the context.

Let $L^{\vee} = {\rm Hom\/}(L, {\Z})$ be the dual lattice of $L$, 
which is canonically embedded in the quadratic space $L \otimes {\Q}$ and contains $L$. 
On the finite Abelian group $D_{L} = L^{\vee}/L$ 
a natural quadratic form $q_{L} : D_{L} \to {\Q}/2{\Z}$ is defined by 
$q_{L}(x+L, x+L) = (x, x)+2{\Z}$. 
This finite quadratic form 
$(D_{L}, q_{L})$,  often abbreviated as $D_{L}$,   
is called the {\it discriminant form\/} of $L$. 
A homomorphism 
$r_{L} : O(L) \to O(D_{L}, q_{L})$ is defined naturally.   
For a primitive sublattice $L$ of an even unimodular lattice $M$ 
with the orthogonal complement $L^{\perp}$, 
there exists a canonical isometry (cf. \cite{Ni})
\begin{equation}\label{disc form of orthogonal complement}
 (D_{L}, q_{L}) \stackrel{\simeq}{\longrightarrow} (D_{L^{\perp}}, -q_{L^{\perp}}). 
\end{equation}

Two even lattices $L$ and $M$ are {\it isogenus\/} 
if $L\otimes {\Z}_{p} \simeq M\otimes {\Z}_{p}$ 
for every prime number $p$ 
and ${\rm sign\/}(L) = {\rm sign\/}(M)$. 
By Nikulin's theorem \cite{Ni} 
$L$ and $M$ are isogenus if and only if 
$(D_{L}, q_{L}) \simeq (D_{M}, q_{M})$ and ${\rm sign\/}(L) = {\rm sign\/}(M)$. 
The {\it genus\/} $\mathcal{G}(L)$ of $L$ 
is the set of isometry classes of even lattices isogenus to $L$. 
On the other hand, 
the set of orientation-preserving isometry classes of 
{\it oriented\/} even lattices isogenus to $L$ 
is denoted by $\widetilde{\mathcal{G}}(L)$ 
and called the {\it proper genus\/} of $L$. 
Writing 
\begin{equation}\label{ambiguous class}
\mathcal{G}_{1}(L) := \{ M \in \mathcal{G}(L) \: | \: O(M) \not= SO(M) \} 
\end{equation} 
and 
$\mathcal{G}_{2}(L) := \mathcal{G}(L) - \mathcal{G}_{1}(L)$, 
we have 
\begin{equation}\label{genus and proper genus}
 |  \widetilde{\mathcal{G}}(L)  |  = 
|  \mathcal{G}_{1}(L)  |  + 2 |  \mathcal{G}_{2}(L)  |  . 
 \end{equation}

The {\it hyperbolic plane\/} 
is the rank $2$ even unimodular lattice 
\begin{equation}\label{hyperbolic plane}
U = {\Z}e + {\Z}f,   \; \; \;  
(e, e) = (f, f) = 0, \; (e, f) = 1.  
\end{equation} 
Throughout  the paper we fix this basis $\{ e, f \} $ for $U$.  
The orientation-reversing isometry 
\begin{equation}\label{mutation isometry of hyperbolic plane}
\iota _{0} : U \longrightarrow U, \: \: \: \: 
\iota _{0} (e) =f, \: \: \iota _{0} (f)=e 
\end{equation}
will be used several times.

\section{Weak formulae}

\subsection{A formula for $\delta (A)$} 
Let $A$ be an Abelian surface. 
For a decomposition $(E_{1}, E_{2})$ of $A$ 
we define an embedding 
$\varphi : U \hookrightarrow NS_{A}$ 
by 
\begin{equation}\label{embedding associated to decomposition}
\varphi (e) = [ E_{1} ], \: \: \: 
\varphi (f) = [ E_{2} ],  
\end{equation}
where $e,  f \in U$ are as defined in $(\ref{hyperbolic plane})$.  
Since   
$([ E_{i} ], [ E_{i} ])  = 0$ 
and 
$([ E_{1} ], [ E_{2} ]) = 1$, 
$\varphi $ is certainly an embedding of $U$.

\begin{definition}
Let 
\begin{equation}\label{the group}
\Gamma _{A} := 
r_{NS}^{-1} \Bigl( \lambda \circ r_{T}(  O_{Hodge}(T_{A}) ) \Bigr) \: 
\subset O(NS_{A}), 
\end{equation}
where 
$r_{NS} : O(NS_{A}) \to O(D_{NS_{A}})$ 
and 
$r_{T} : O(T_{A}) \to O(D_{T_{A}})$ 
are the natural homomorphisms, 
and 
$\lambda : O(D_{T_{A}}) \simeq O(D_{NS_{A}})$ 
is the isomorphism induced by the isometry 
$(D_{T_{A}}, q_{T_{A}}) \simeq (D_{NS_{A}}, -q_{NS_{A}})$
(see $(\ref{disc form of orthogonal complement})$).
\end{definition}

There is an obvious inclusion 
${\rm Ker\/}(r_{NS}) \cdot \{ \pm {\rm id\/} \} \subset \Gamma _{A}$.   
Let $O_{Hodge}(H^{2}(A, {\Z}))$ be the group of Hodge isometries of  $H^{2}(A, {\Z})$.  
By Nikulin's theorem (\cite{Ni} Corollary 1.5.2),  
the group $\Gamma _{A}$ can be written as 
\begin{equation}\label{image of Hodge isometry group}
\Gamma _{A} = 
{\rm Image\/} \: (\: O_{Hodge}(H^{2}(A, {\Z})) \longrightarrow O(NS_{A})\: ). 
\end{equation}

\begin{prop}\label{injective}
Let $(E_{1}, E_{2})$ and $(F_{1}, F_{2})$ be decompositions of an Abelian surface $A$ 
and let $\varphi $, $\psi $ be the corresponding embeddings of $U$. 
Then  $(E_{1}, E_{2})$ and $(F_{1}, F_{2})$ 
are isomorphic if and only if 
$\varphi \in \Gamma _{A} \cdot \psi $. 
\end{prop}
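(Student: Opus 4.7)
The plan is to combine the characterization $(\ref{image of Hodge isometry group})$ of $\Gamma_{A}$ as the image of $O_{Hodge}(H^{2}(A, \Z))$ in $O(NS_{A})$ with Shioda's Torelli theorem for Abelian surfaces. The former (a consequence of Nikulin's extension theorem) translates questions about $\Gamma_{A}$ into questions about Hodge self-isometries of $H^{2}(A, \Z)$, while the latter realizes such Hodge isometries by automorphisms of $A$. The proposition then follows by chasing how an automorphism acts on the classes $[E_{i}]$ and $[F_{i}]$.

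For the ``only if'' direction, I first handle the strictly isomorphic case: given $f \in {\rm Aut\/}(A)$ with $f(E_{i}) = F_{i}$, the identity $f^{*}[F_{i}] = [E_{i}]$ shows that $\gamma := f^{*}|_{NS_{A}} \in \Gamma_{A}$ satisfies $\gamma \circ \psi = \varphi$. In the swap case where $f(E_{1}) = F_{2}$ and $f(E_{2}) = F_{1}$, I also need an element of $\Gamma_{A}$ exchanging $[F_{1}]$ and $[F_{2}]$. Since $U$ is unimodular, $\psi(U)$ is a direct summand of $NS_{A}$, and the involution $\tau \in O(NS_{A})$ acting as $\iota_{0}$ on $\psi(U)$ and as the identity on $\psi(U)^{\perp}$ is well-defined. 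Under the canonical identification $D_{NS_{A}} \simeq D_{\psi(U)^{\perp}}$, $\tau$ acts trivially on $D_{NS_{A}}$, so $\tau \in {\rm Ker\/}(r_{NS}) \subset \Gamma_{A}$; then $\gamma := f^{*} \tau$ satisfies $\gamma \circ \psi = \varphi$.

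For the converse, I lift $\gamma \in \Gamma_{A}$ to a Hodge isometry $\tilde\gamma$ of $H^{2}(A, \Z)$ via $(\ref{image of Hodge isometry group})$ and apply Shioda's Torelli theorem to realize $\tilde\gamma$ by an automorphism $f$ of $A$ (up to sign and translation, which do not affect $\gamma = \tilde\gamma|_{NS_{A}}$). The sign must be $+1$, since $\gamma[F_{i}]$ equals some $[E_{j}]$, and an effective curve class cannot coincide with an anti-effective one. Hence $f^{*}[F_{i}] = [E_{\sigma(i)}]$ for some permutation $\sigma$ of $\{1, 2\}$. Since elliptic curves in $A$ sharing a cohomology class are translates of one another, $f^{-1}(F_{i})$ and $E_{\sigma(i)}$ differ by a translation, and composing $f$ with a suitable translation---chosen independently on each factor using $A = E_{1} \oplus E_{2}$---produces an automorphism carrying $E_{i}$ onto $F_{\sigma^{-1}(i)}$ as a set, proving that $(E_{1}, E_{2})$ and $(F_{1}, F_{2})$ are isomorphic.

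The main obstacle is the bookkeeping required in the swap case and in disposing of the sign and translation ambiguities from Shioda's theorem. The former is handled by the unimodularity of $U$ (which makes $\tau$ act trivially on the discriminant), and the latter by the positivity of elliptic curve classes together with the product structure $A = E_{1} \oplus E_{2}$.
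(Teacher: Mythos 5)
Your ``only if'' direction is correct and is essentially the paper's argument: the swap is absorbed into $\Gamma_{A}$ by the involution acting as $\iota_{0}$ on the image of $U$ and trivially on its orthogonal complement, which lies in ${\rm Ker}(r_{NS})$ because $U$ is unimodular.

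The converse, however, has a genuine gap: you apply Shioda's Torelli theorem to an arbitrary lift $\tilde\gamma$ of $\gamma$ obtained from $(\ref{image of Hodge isometry group})$, but that theorem can only realize Hodge isometries of determinant $+1$. Indeed, any automorphism $f$ of $A$ acts on $H^{2}(A,\Z)=\wedge^{2}H^{1}(A,\Z)$ by $\wedge^{2}(f^{*}|_{H^{1}})$, whose determinant is $\det(f^{*}|_{H^{1}})^{3}=1$ (the determinant on $H^{1}$ is $+1$ since $f^{*}$ commutes with the complex structure), and $\det(-\Phi)=\det(\Phi)$ in rank $6$; so if $\det(\tilde\gamma)=-1$ there is no $f$ with $f^{*}=\pm\tilde\gamma$, and the Torelli step is simply unavailable. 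This case really occurs and cannot be removed by re-choosing the lift: for $\rho(A)=4$ every Hodge isometry of $T_{A}$ has determinant $+1$, so $\det(\tilde\gamma)=\det(\gamma)$, and orientation-reversing $\gamma\in\Gamma_{A}$ carrying $\psi$ to $\varphi$ are exactly what happens when the two decompositions are isomorphic only after a swap. A symptom of the problem is your permutation $\sigma$: nothing in your argument produces a nontrivial $\sigma$ --- if the Torelli application were legitimate you would get $f^{*}[F_{i}]=[E_{i}]$, hence a \emph{strict} isomorphism; combined with your (correct) observation that the embeddings of $(E_{1},E_{2})$ and $(E_{2},E_{1})$ are $\Gamma_{A}$-equivalent, this would force $E_{1}\simeq E_{2}$ for every decomposition, which is absurd.

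The missing step, which is exactly how the paper handles it, is: when $\det(\tilde\gamma)=-1$, precompose $\tilde\gamma$ with the isometry of $H^{2}(A,\Z)$ acting as $\varphi\circ\iota_{0}\circ\varphi^{-1}$ on $\varphi(U)$ (i.e.\ swapping $[E_{1}]$ and $[E_{2}]$) and as the identity on $\varphi(U)^{\perp}$. This is again a Hodge isometry (it is the identity on $T_{A}\subset\varphi(U)^{\perp}$) of determinant $-1$, so the composite has determinant $+1$ and Shioda's theorem applies, producing an automorphism $g$ with $g(E_{i})=F_{3-i}$, i.e.\ the swapped strict isomorphism --- which is all the proposition claims. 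With this case added, the rest of your argument (the sign disposed of by effectivity, the translation ambiguity handled on each factor) goes through.
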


\begin{proof}
If $(E_{1}, E_{2})$ and $(F_{1}, F_{2})$ are strictly isomorphic, 
there exists an automorphism $f$ of  $A$  
satisfying $f(E_{i}) = F_{i}$. 
Then 
$f^{\ast}([F_{i}]) = [E_{i}]$ 
and 
$f^{\ast}|_{NS_{A}} \in \Gamma _{A}$ 
so that we have 
$\varphi \in \Gamma _{A} \cdot \psi $. 
Let $\iota _{0}$ be the isometry of $U$ defined in $(\ref{mutation isometry of hyperbolic plane})$. 
The embedding associated to the decomposition $(E_{2}, E_{1})$ is 
\begin{equation*} 
\varphi \circ \iota _{0} = 
\left( 
(\varphi \circ \iota _{0} \circ \varphi ^{-1}) |_{\varphi (U)} \oplus {\rm id\/}_{\varphi (U)^{\perp}}
\right) \circ \varphi 
\: \in \Gamma _{A} \cdot \varphi . 
\end{equation*}
Therefore 
$\varphi $ and $\psi $ are $\Gamma _{A}$-equivalent 
if $(E_{1}, E_{2})$ and $(F_{1}, F_{2})$ are isomorphic.

Conversely, 
suppose that 
$\varphi = \gamma \circ \psi $ 
for some isometry $\gamma \in \Gamma _{A}$. 
By $(\ref{image of Hodge isometry group})$ 
$\gamma $ can be extended to a Hodge isometry 
$\Phi : H^{2}(A, {\Z}) \to H^{2}(A, {\Z})$.   
When ${\rm det\/}(\Phi ) = 1$, 
Shioda's Torelli theorem (\cite{Sh} Theorem 1) 
assures the existence of an automorphism $f$ of $A$ 
such that $f^{\ast} = \Phi $ or $-\Phi $. 
Since $\Phi $ preserves the cone $\mathcal{C}_{A}^{+}$, 
we have $f^{\ast} = \Phi $. 
Then 
$f^{\ast}([F_{i}]) = [E_{i}]$ 
so that 
$f(E_{i}) = F_{i}$. 
On the other hand, when ${\rm det\/}(\Phi ) = -1$,  
consider the Hodge isometry 
\begin{equation*}\label{new isometry}
\Psi  : =  
\Bigl(       (\varphi \circ \iota _{0} \circ \varphi ^{-1})|_{\varphi (U)}  \oplus {\rm id\/}_{\varphi (U)^{\perp}}     \Bigr)   
\circ   \Phi :   H^{2}(A, {\Z}) \to H^{2}(A, {\Z}).  
\end{equation*}  
As above,  
there exists an automorphism $g$ of $A$ such that $g^{\ast} = \Psi $.  
Hence $(E_{1}, E_{2})$ is strictly isomorphic to $(F_{2}, F_{1})$.  
\end{proof}

Let 
\begin{equation*}\label{Emb}
{\rm Emb\/} (U, NS_{A}) 
\end{equation*} 
be the set of embeddings of $U$ into $NS_{A}$. 
By Proposition \ref{injective} 
an injective map 
\begin{equation}\label{main map} 
{\rm Dec\/} (A) \hookrightarrow  \Gamma _{A} \backslash {\rm Emb\/}(U, NS_{A}) 
\end{equation} 
is defined. 
To prove its surjectivity, 
we need the following well-known proposition. 
A non-zero vector $l \in NS_{A}$ is {\it isotropic\/} 
if $(l, l)=0$.

\begin{lemma}[cf.\cite{B-L}] \label{prim isotropic vector}
Every primitive isotropic vector of $NS_{A}$ 
contained in the closure of the cone $\mathcal{C}_{A}^{+}$ 
is the class of an elliptic curve in $A$. 
\end{lemma}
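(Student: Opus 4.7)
The plan is to pass through the Appell--Humbert description of $NS_{A}$: I will realize $l$ as the first Chern class of a line bundle whose associated Hermitian form is positive semi-definite of rank one, and then extract the elliptic curve from the kernel of that form.

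I would first pick any $L \in \mathrm{Pic}(A)$ with $c_{1}(L) = l$ and write $A = V/\Lambda$. Appell--Humbert attaches to $L$ a Hermitian form $H$ on $V$ whose imaginary part is integer-valued on $\Lambda$, and one has $(l, l) = 2 \det H$ up to a positive normalizing constant. Ample classes correspond to positive definite forms, so a short connectedness argument identifies $\mathcal{C}_{A}^{+}$ with the open cone of positive definite Hermitian forms; consequently $\overline{\mathcal{C}_{A}^{+}}$ is the cone of positive semi-definite ones. The hypotheses on $l$ then force $H$ to be positive semi-definite of complex rank exactly one.

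Setting $K := \ker H \subset V$, a complex line, the alternating form $\mathrm{Im}(H)|_{\Lambda}$ has real rank $2$ with radical $K \cap \Lambda$, so $K \cap \Lambda$ is a rank $2$ sublattice of $\Lambda$ and $E := K/(K \cap \Lambda)$ is an elliptic curve embedded in $A$ as a sub-abelian variety; equivalently, $E$ is the identity component of the group $K(L) = \{x \in A : t_{x}^{\ast} L \simeq L\}$. It then remains to verify $[E] = l$. The quotient $\pi : A \to A/E$ maps onto an elliptic curve, and $L$ descends, up to a line bundle with trivial Chern class, to some $\bar{L}$ on $A/E$; hence $l = \pi^{\ast} c_{1}(\bar{L}) = \deg(\bar{L}) \cdot [E]$ in $NS_{A}$. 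Primitivity of $l$ forces $\deg(\bar{L}) = \pm 1$, and the fact that $l$ and the effective class $[E]$ both lie in $\overline{\mathcal{C}_{A}^{+}}$ pins down the positive sign.

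The main obstacle will be this last identification: it is precisely here that the primitivity of $l$ is used to rule out $l$ being a proper integer multiple of $[E]$, and the orientation supplied by the closed positive cone is what resolves the remaining sign ambiguity. The production of $K$ together with its rational structure against $\Lambda$, and the resulting subcurve $E$, is a direct consequence of the rank and signature analysis of $H$, but without either hypothesis the final step would genuinely fail.
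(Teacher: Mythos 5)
Your proof is correct, and it coincides with the standard argument: the paper gives no proof of this lemma at all, simply citing Birkenhake--Lange, and your route through Appell--Humbert theory (positive semi-definiteness of $H$ from $l\in\overline{\mathcal{C}_{A}^{+}}$ with $\det H=0$, the rank-$1$ kernel $K$ whose rationality follows because $K\cap\Lambda$ is the rank-$2$ radical of the integral form $\mathrm{Im}\,H|_{\Lambda}$, descent of the Hermitian form to $A/E$, and then $l=\deg(\bar{L})\cdot[E]$ with primitivity and the cone condition forcing $\deg(\bar{L})=1$) is exactly the proof one finds in that reference. The only steps you state rather than verify (that $\overline{\mathcal{C}_{A}^{+}}$ corresponds to positive semi-definite forms, and that the radical of $\mathrm{Im}\,H|_{\Lambda}$ has rank $2$ and is saturated) are each one-line checks, so there is no gap.
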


Now we have

\begin{prop}\label{bijective}
The map defined in $(\ref{main map} )$ is bijective. 
\end{prop}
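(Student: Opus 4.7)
Since Proposition \ref{injective} already supplies injectivity, the plan is to construct, for each embedding $\varphi \in {\rm Emb}(U, NS_A)$, a decomposition of $A$ whose associated embedding lies in the $\Gamma_A$-orbit of $\varphi$. The natural attempt is to realize $\varphi(e)$ and $\varphi(f)$ as classes of elliptic curves via Lemma \ref{prim isotropic vector}; the whole argument reduces to arranging the hypotheses of that lemma and then verifying that the resulting pair of curves is a genuine decomposition.

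Setting $v := \varphi(e)$ and $w := \varphi(f)$, I would first check primitivity in $NS_A$: if $v = n v'$ with $v'$ primitive in $NS_A$, the identity $n(v', w) = (v, w) = 1$ forces $n = 1$, and symmetrically for $w$. Next I would handle orientation. In the form $NS_A \otimes \R$ of signature $(1, \rho(A)-1)$, two nonzero isotropic vectors with positive mutual pairing must lie in the closure of a common connected component of the positive cone, so $v, w$ are both in $\overline{\mathcal{C}_A^+}$ or both in $-\overline{\mathcal{C}_A^+}$. In the latter case I would replace $\varphi$ by $(-{\rm id}) \circ \varphi$; this does not change the $\Gamma_A$-orbit, since $-{\rm id}_{NS_A}$ belongs to $\Gamma_A$ (as recorded in the excerpt via the inclusion ${\rm Ker}(r_{NS}) \cdot \{ \pm {\rm id} \} \subset \Gamma_A$).

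With $v, w \in \overline{\mathcal{C}_A^+}$, Lemma \ref{prim isotropic vector} yields elliptic curves $E_1, E_2 \subset A$ with $[E_1] = v$ and $[E_2] = w$. It then remains to see that the natural map $\mu: E_1 \times E_2 \to A$ is an isomorphism. Since $E_1 \cdot E_2 = (v, w) = 1$, the two curves are distinct and meet set-theoretically in a single transverse point. The image of $\mu$ is an abelian subvariety of $A$ containing both $E_1$ and $E_2$, hence equals $A$, and $\ker \mu = \{ (x, -x) \mid x \in E_1 \cap E_2 \}$ has order $1$; thus $\mu$ is an isogeny of degree $1$, i.e., an isomorphism. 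By construction $\varphi$ is the embedding associated to $(E_1, E_2)$.

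The main point to be careful with is the orientation step: the membership $-{\rm id} \in \Gamma_A$ is asserted only parenthetically, and I would want to pin it down cleanly by noting that $-{\rm id}_{T_A}$ is a Hodge isometry and that its image in $O(D_{T_A})$ transports under $\lambda$ to the image of $-{\rm id}_{NS_A}$ in $O(D_{NS_A})$. The deduction that intersection number $1$ forces $\mu$ to be an isomorphism is standard, but also worth writing out to exclude the worry that $E_1 \cap E_2$ might fail to be reduced of length $1$.
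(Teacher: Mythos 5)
Your argument is correct and follows essentially the same route as the paper: reduce to surjectivity, compose with $-{\rm id}$ (legitimate since $-{\rm id}_{NS_{A}} \in \Gamma_{A}$) to place $\varphi(e)$, $\varphi(f)$ in $\overline{\mathcal{C}_{A}^{+}}$, invoke Lemma \ref{prim isotropic vector} to realize them as classes of elliptic curves, and conclude from $([E_{1}],[E_{2}])=1$ that $(E_{1},E_{2})$ is a decomposition with associated embedding $\varphi$. Your extra checks (primitivity of $\varphi(e)$, $\varphi(f)$ and the degree-one isogeny argument) simply spell out details the paper leaves implicit.
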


\begin{proof}
It suffices to prove the surjectivity.  
Let us given an embedding $\varphi : U \hookrightarrow NS_{A}$. 
Composing with $-{\rm id\/}$ if necessary, 
we may assume that the vector $\varphi (e)$ is contained in the closure of $\mathcal{C}_{A}^{+}$.  
As $(\varphi (e), \varphi (f)) = 1$, 
the vector $\varphi (f)$ is also contained in the closure of $\mathcal{C}_{A}^{+}$.  
By the above Lemma \ref{prim isotropic vector}, 
there exist elliptic curves $E_{1}, E_{2}$ in $A$ 
such that $[E_{1}]=\varphi (e)$ and $[E_{2}]=\varphi (f)$.  
As  $([E_{1}], [E_{2}]) = 1$, 
we have $E_{1} \cap E_{2} = \{ 0 \} $ 
so that $(E_{1}, E_{2})$ is a decomposition of $A$. 
The embedding associated with $(E_{1}, E_{2})$ is $\varphi $. 
\end{proof}

\begin{corollary}\label{criterion of decomposability} 
An Abelian surface $A$ is decomposable 
if and only if $NS_{A}$ admits an embedding of the hyperbolic plane $U$. 
\end{corollary}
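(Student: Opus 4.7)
The plan is to read off this corollary directly from Proposition \ref{bijective}. That proposition establishes a bijection $\mathrm{Dec}(A) \simeq \Gamma_{A} \backslash \mathrm{Emb}(U, NS_{A})$, so non-emptiness of $\mathrm{Dec}(A)$ is equivalent to non-emptiness of $\mathrm{Emb}(U, NS_{A})$, which is exactly the claim. In other words, the corollary is essentially a repackaging of the bijectivity already established.

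For a self-contained write-up I would split into the two implications. The ``only if'' direction is immediate from the construction (\ref{embedding associated to decomposition}): any decomposition $(E_{1}, E_{2})$ of $A$ yields the embedding $\varphi : U \hookrightarrow NS_{A}$ with $\varphi(e) = [E_{1}]$ and $\varphi(f) = [E_{2}]$, and this is an isometry onto its image because $([E_{i}], [E_{i}]) = 0$ and $([E_{1}], [E_{2}]) = 1$. For the converse, I would repeat the surjectivity portion of the proof of Proposition \ref{bijective}. Starting from an embedding $\varphi : U \hookrightarrow NS_{A}$, first replace $\varphi$ by $-\varphi$ if necessary so that $\varphi(e)$ lies in the closure of $\mathcal{C}_{A}^{+}$; the identity $(\varphi(e), \varphi(f)) = 1 > 0$ then forces $\varphi(f)$ into the same closure. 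Lemma \ref{prim isotropic vector} now produces elliptic curves $E_{1}, E_{2} \subset A$ with $[E_{1}] = \varphi(e)$ and $[E_{2}] = \varphi(f)$, and the intersection number $([E_{1}], [E_{2}]) = 1$ forces $E_{1} \cap E_{2} = \{ 0 \}$, so the natural map $E_{1} \times E_{2} \to A$ is an isogeny of degree $1$, hence an isomorphism.

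The only nontrivial ingredient is Lemma \ref{prim isotropic vector}, which identifies primitive isotropic classes in $\overline{\mathcal{C}_{A}^{+}}$ with classes of elliptic curves on $A$; everything else is a formal consequence of Proposition \ref{bijective}. Consequently there is no real obstacle in this step, and the corollary can be stated as a one-line deduction once the surjectivity in Proposition \ref{bijective} has been proved.
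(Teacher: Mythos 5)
Your proof is correct and is essentially the paper's own argument: the corollary is stated there as an immediate consequence of Proposition \ref{bijective}, whose surjectivity step (via Lemma \ref{prim isotropic vector}) is exactly what you reproduce, and the converse direction is the embedding construction $(\ref{embedding associated to decomposition})$. Nothing further is needed.
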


\begin{prop}\label{a trick}
Let $L$ be an even lattice satisfying $NS_{A} \simeq U \oplus L$. 
Then 
\begin{equation*} 
\Bigl| \: \Gamma _{A} \backslash {\rm Emb\/}(U, NS_{A}) \: \Bigr| = 
\sum_{M\in \mathcal{G}(L)} \Bigl| \: O_{Hodge}(T_{A}) \backslash O(D_{M})/O(M) \: \Bigr| .
\end{equation*}
\end{prop}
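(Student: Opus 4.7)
The plan is to partition ${\rm Emb\/}(U, NS_{A})$ according to the isometry class of the orthogonal complement $\varphi (U)^{\perp }$, identify each piece as a homogeneous space under $O(NS_{A})$, and evaluate the resulting $\Gamma _{A}$-quotient by pushing it down to the discriminant group via the surjection $r_{NS}$.

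Since $U$ is unimodular, any $\varphi \in {\rm Emb\/}(U, NS_{A})$ is automatically primitive and produces an orthogonal splitting $NS_{A} = \varphi (U) \oplus \varphi (U)^{\perp }$. The complement therefore satisfies $D_{\varphi (U)^{\perp }} \simeq D_{NS_{A}} \simeq D_{L}$ and has the same signature as $L$, hence it lies in $\mathcal{G}(L)$. This defines a map $\Phi : {\rm Emb\/}(U, NS_{A}) \to \mathcal{G}(L)$, $\varphi \mapsto [\varphi (U)^{\perp }]$. Conversely, for any $M \in \mathcal{G}(L)$ the lattice $U \oplus M$ shares its genus with $U \oplus L \simeq NS_{A}$, and Nikulin's uniqueness theorem for indefinite even lattices (trivially when $\rho (A) = 2$, where $L = 0$) gives $U \oplus M \simeq NS_{A}$. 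Thus $\Phi $ is surjective, and any two embeddings with isomorphic complements differ by the isometry of $NS_{A}$ built from the two orthogonal splittings; hence each fibre $\Phi ^{-1}(M)$ is a single $O(NS_{A})$-orbit. The stabilizer of a chosen reference embedding $\varphi _{M} \in \Phi ^{-1}(M)$ consists of the isometries of $NS_{A}$ fixing $\varphi _{M}(U)$ pointwise and acting arbitrarily on the complement, which through the splitting is canonically $O(M)$. Consequently $\Phi ^{-1}(M) \simeq O(NS_{A})/O(M)$ as $O(NS_{A})$-sets, and
\begin{equation*}
\bigl| \Gamma _{A} \backslash {\rm Emb\/}(U, NS_{A}) \bigr| \;=\; \sum_{M \in \mathcal{G}(L)} \bigl| \Gamma _{A} \backslash O(NS_{A}) / O(M) \bigr| .
\end{equation*}

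I would then evaluate each double coset space on the discriminant side. Nikulin's theorem guarantees that $r_{NS}: O(NS_{A}) \to O(D_{NS_{A}})$ is surjective in the ranges appearing here: for $\rho (A) \in \{3,4\}$, $NS_{A}$ is indefinite of rank at least $3$ and $D_{NS_{A}}$ is generated by at most two elements, while $\rho (A) = 2$ is immediate since $D_{NS_{A}} = 0$. Because ${\rm Ker\/}(r_{NS}) \subset \Gamma _{A}$ by definition, the quotient is identified with $\lambda (r_{T}(O_{Hodge}(T_{A}))) \backslash O(D_{NS_{A}})$. Right multiplication by $\alpha \in O(M) \subset O(NS_{A})$ (the identity on $\varphi _{M}(U)$, equal to $\alpha $ on the complement) descends to right multiplication by $r_{M}(\alpha ) \in O(D_{M})$, using the canonical equality $D_{NS_{A}} = D_{M}$ coming from the splitting. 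Transporting across $\lambda $ to the $T_{A}$-side then rewrites the double coset space as $O_{Hodge}(T_{A}) \backslash O(D_{M}) / O(M)$, with the actions understood through $r_{T}$ and $r_{M}$ as in the statement, and summing over $M \in \mathcal{G}(L)$ gives the claimed equality.

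The main obstacle is the double appeal to Nikulin's theorem: once to guarantee that every $M \in \mathcal{G}(L)$ is realized as an orthogonal complement inside $NS_{A}$, and again to secure surjectivity of $r_{NS}$. In both applications the relevant rank and length hypotheses on $NS_{A}$ must be checked case by case, and the degenerate situation $\rho (A) = 2$ (where $\mathcal{G}(L) = \{0\}$ and both sides reduce to $1$) must be disposed of separately.
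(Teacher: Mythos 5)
Your proposal is correct and follows essentially the same route as the paper: partition the embeddings by the isometry class of $\varphi (U)^{\perp}$, use Nikulin's uniqueness theorem to realize each $M \in \mathcal{G}(L)$ and identify each stratum with $\Gamma _{A} \backslash O(NS_{A})/O(M)$, then push down to the discriminant form via the surjectivity of $r_{NS}$ and the inclusion ${\rm Ker\/}(r_{NS}) \subset \Gamma _{A}$. Your extra care with the single-orbit claim, the hypotheses of Nikulin's theorems, and the degenerate case $\rho (A)=2$ only makes explicit what the paper leaves implicit.
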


\begin{proof}
For each even lattice $M \in \mathcal{G}(L)$ 
there exists an embedding 
$\varphi _{M} : U \hookrightarrow NS_{A}$ 
with 
$\varphi _{M}(U)^{\perp} \simeq M$ 
by Nikulin-Kneser's uniqueness theorem (Corollary 1.13.3 of \cite{Ni}).   
We have the decomposition  
\begin{eqnarray*} 
                     \Gamma _{A} \backslash {\rm Emb\/}(U, NS_{A}) 
&       =      & 
                     \mathop{\bigsqcup}_{M \in \mathcal{G}(L)} 
					              \Gamma _{A} \backslash \{ \: \varphi : U \hookrightarrow NS_{A}, \; \varphi (U)^{\perp} \simeq M \:  \}  \\
&       =      & 
                     \mathop{\bigsqcup}_{M \in \mathcal{G}(L)} 
					              \Gamma _{A} \backslash  \left(  O(NS_{A}) \cdot \varphi _{M}  \right)  \\ 
& \simeq & 
                    \mathop{\bigsqcup}_{M \in \mathcal{G}(L)} 
                                  \Gamma _{A} \backslash O(NS_{A}) / O(M) . 
\end{eqnarray*}
We apply the homomorphism 
$r : O(NS_{A}) \to O(D_{NS_{A}})$,  
which is surjective  
by Nikulin's theorem (\cite{Ni} Theorem 1.14.2). 
Since ${\rm Ker\/}(r) \subset \Gamma _{A}$, 
we obtain  
\begin{eqnarray*} 
                      \Gamma _{A} \backslash O(NS_{A}) / O(M) 
& \simeq & 
                     r ( \Gamma _{A}) \backslash O(D_{NS_{A}}) /  r (O(M))     \\ 
& \simeq & 
                     r ( O_{Hodge}(T_{A}) ) \backslash O(D_{M}) /  r (O(M)). 
\end{eqnarray*}
\end{proof}

By Propositions \ref{bijective} and \ref{a trick} 
we obtain

\begin{prop}\label{weak formula1} 
Let $A$ be a decomposable Abelian surface 
and let $L$ be an even lattice satisfying $NS_{A} \simeq U \oplus L$. 
Then the decomposition number $\delta (A)$ is given by 
\begin{equation}\label{eqn: weak formula1}
\delta(A) = 
\sum_{M \in \mathcal{G}(L)} \Bigl| \: O_{Hodge}(T_{A}) \backslash O(D_{M})/O(M) \: \Bigr| .
\end{equation}
\end{prop}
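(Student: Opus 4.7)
The plan is essentially to concatenate the two preceding propositions, so the argument is almost formal once those are in hand. First I would invoke Proposition \ref{bijective} to identify
\[
\delta(A) = |\mathrm{Dec}(A)| = \bigl|\,\Gamma_A \backslash \mathrm{Emb}(U, NS_A)\,\bigr|,
\]
using that $(E_1,E_2)\mapsto \varphi$ from \eqref{embedding associated to decomposition} gives a bijection between isomorphism classes of decompositions and $\Gamma_A$-orbits of embeddings of the hyperbolic plane into $NS_A$. This step rests on Shioda's Torelli theorem and on the description \eqref{image of Hodge isometry group} of $\Gamma_A$ as the image of the group of Hodge isometries of $H^2(A,\Z)$; both were already absorbed into the proof of Proposition \ref{injective}.

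Next I would apply Proposition \ref{a trick}, which rewrites the right-hand side as
\[
\bigl|\,\Gamma_A \backslash \mathrm{Emb}(U, NS_A)\,\bigr|
= \sum_{M \in \mathcal{G}(L)} \bigl|\,O_{Hodge}(T_A) \backslash O(D_M) / O(M)\,\bigr|.
\]
The input from the hypothesis $NS_A \simeq U \oplus L$ is that every $M$ in the genus $\mathcal{G}(L)$ actually occurs as $\varphi(U)^{\perp}$ for some embedding, which is ensured by Nikulin--Kneser uniqueness inside the even unimodular overlattice framework. The double-coset description then comes from surjectivity of $r:O(NS_A)\to O(D_{NS_A})$ together with the inclusion $\mathrm{Ker}(r)\subset \Gamma_A$, so that passing through the discriminant form replaces $\Gamma_A$-orbits on $O(NS_A)/O(M)$ by $r(\Gamma_A)$-orbits on $O(D_{NS_A})/r(O(M))$, and the isometry \eqref{disc form of orthogonal complement} reinterprets this as $O_{Hodge}(T_A)$-orbits acting on $O(D_M)/r(O(M))$.

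Concatenating these two bijections yields the formula \eqref{eqn: weak formula1}. There is no real obstacle here: the only point where care is needed is to make sure the left and right $\Gamma_A$-actions, the Hodge isometry action on $D_{T_A}$, and the identification $D_{T_A}\simeq D_{NS_A}$ are used consistently so that the groups on both sides of the quotient genuinely match. Since this bookkeeping was handled in the proofs of Propositions \ref{bijective} and \ref{a trick}, the present proposition follows by direct substitution and requires no additional argument.
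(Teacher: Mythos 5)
Your proposal is correct and follows exactly the paper's own route: the paper derives Proposition \ref{weak formula1} by directly combining Proposition \ref{bijective} with Proposition \ref{a trick}, just as you do. No further argument is needed beyond this concatenation.
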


This formula will be analyzed in more detail in the subsequent sections.

\begin{remark}
The decomposition number is related to 
the number of principal polarizations.  
For example see Hayashida \cite{Ha}.  
Herbert Lange taught the author that 
Peter Schuster also studied in his thesis 
the number of decompositions and principal polarizations by using class numbers of Hermitian forms.  
See \cite{La} and the references therein for more details.
\end{remark}


\subsection{A formula for $\widetilde{\delta} (A)$} 
Let $A$ be an Abelian surface with $\rho (A) = 4$. 
In this case, 
the transcendental lattice $T_{A}$ is a rank $2$ positive-definite even lattice 
and the Hodge structure of $T_{A}$ induces a natural orientation of $T_{A}$. 
An isometry of $T_{A}$ preserves the Hodge structure 
if and only if it preserves the orientation. 
Thus we have 
\begin{equation}\label{Hodge isometry and SO}
O_{Hodge}(T_{A}) = SO(T_{A}). 
\end{equation}
Since 
$(D_{T_{A}}, q_{T_{A}}) \simeq (D_{NS_{A}}, -q_{NS_{A}})$, 
the lattices $NS_{A}$ and $U \oplus T_{A}(-1)$ are isogenus.  
It follows from Nikulin-Kneser's uniqueness theorem that 
\begin{equation}\label{NS and T for Picard number 4}
NS_{A} \simeq U \oplus T_{A}(-1). 
\end{equation}
In particular, 
$A$ is always decomposable (\cite{S-M}).  
Let 
\begin{equation}\label{special group}
S\Gamma _{A} := \Gamma _{A} \cap SO(NS_{A}). 
\end{equation}
For a Hodge isometry $\Phi $ of $H^{2}(A, {\Z})$ 
we have ${\rm det\/} (\Phi ) = 1$ 
if and only if $\Phi |_{NS_{A}} \in S\Gamma _{A}$.  
With this fact in mind, 
we can prove the following proposition similarly as 
Propositions \ref{injective} and \ref{bijective}.

\begin{prop}\label{bijective widetilde}
 Suppose that $\rho (A)=4$. 
 For a decomposition $(E_{1}, E_{2})$ of $A$ 
 we define the embedding $\varphi : U \hookrightarrow NS_{A}$ by 
 the equation $(\ref{embedding associated to decomposition})$. 
 Then this assignment induces the bijection 
\begin{equation*}
\widetilde{{\rm Dec\/}} (A) \simeq  S\Gamma _{A} \backslash {\rm Emb\/}(U, NS_{A}).  
\end{equation*} 
\end{prop}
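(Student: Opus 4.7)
The plan is to parallel the proofs of Propositions \ref{injective} and \ref{bijective}, refining every step to track the sign of the determinant. Under the assumption $\rho(A)=4$, the identity (\ref{Hodge isometry and SO}) together with Nikulin's criterion (\cite{Ni} Corollary 1.5.2) gives the key fact, already recorded before the statement, that a Hodge isometry $\Phi$ of $H^{2}(A,\Z)$ satisfies $\det(\Phi)=1$ if and only if $\Phi|_{NS_{A}}\in S\Gamma_{A}$. Combined with Shioda's Torelli theorem, this provides the bridge between automorphisms of $A$ and orientation-preserving Hodge isometries, and is the only substantive input beyond what was already used in Propositions \ref{injective} and \ref{bijective}.

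For well-definedness and injectivity, suppose first that $(E_{1},E_{2})$ and $(F_{1},F_{2})$ are strictly isomorphic via an automorphism $f$ of $A$ with $f(E_{i})=F_{i}$. Then $f^{\ast}$ is a Hodge isometry of determinant $+1$, since a holomorphic automorphism preserves the canonical orientation of $H^{2}(A,\Z)$ (e.g.\ it acts on the positive-definite $3$-plane spanned by $(H^{2,0}\oplus H^{0,2})_{\R}$ and a K\"ahler class in an orientation-preserving way), so $f^{\ast}|_{NS_{A}}\in S\Gamma_{A}$ transports $\psi$ to $\varphi$. Conversely, given $\varphi=\gamma\circ\psi$ with $\gamma\in S\Gamma_{A}$, extend $\gamma$ to a Hodge isometry $\Phi$ of $H^{2}(A,\Z)$ with $\det(\Phi)=1$, and apply Shioda's Torelli theorem to obtain an automorphism $f$ of $A$ with $f^{\ast}=\pm\Phi$; the sign is fixed by preservation of $\mathcal{C}_{A}^{+}$, whence $f(E_{i})=F_{i}$. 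The essential difference from Proposition \ref{injective} is that the swap isometry $\iota_{0}$ on $U$, extended by the identity on $\varphi(U)^{\perp}$, lies in $\Gamma_{A}$ but has total determinant $-1$ on $NS_{A}$, so it is excluded from $S\Gamma_{A}$; consequently the permutation $(E_{1},E_{2})\leftrightarrow(E_{2},E_{1})$ does not identify embeddings in this finer quotient, faithfully reflecting the fact that these two decompositions are strictly non-isomorphic.

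Surjectivity follows the pattern of Proposition \ref{bijective}, again tracked through the determinant. Given $\varphi:U\hookrightarrow NS_{A}$, we may replace $\varphi$ by $-\varphi$ so that $\varphi(e)$ lies in the closure of $\mathcal{C}_{A}^{+}$; since $NS_{A}$ has rank $4$, the isometry $-\mathrm{id}_{NS_{A}}$ has determinant $+1$ and belongs to $S\Gamma_{A}$ (it is induced by $-\mathrm{id}_{T_{A}}\in O_{Hodge}(T_{A})=SO(T_{A})$), so this substitution is harmless within the $S\Gamma_{A}$-orbit. The relation $(\varphi(e),\varphi(f))=1$ then forces $\varphi(f)$ into the same closure, and Lemma \ref{prim isotropic vector} produces elliptic curves $E_{1},E_{2}\subset A$ with $[E_{i}]$ equal to the corresponding image; the intersection number $1$ yields $E_{1}\cap E_{2}=\{0\}$ and hence a decomposition whose associated embedding is $\varphi$. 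The main technical point throughout is to monitor every sign change and factor swap against the determinant, so that the quotient by $S\Gamma_{A}$ instead of $\Gamma_{A}$ correctly separates the ordered decompositions up to strict isomorphism.
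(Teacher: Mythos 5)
Your argument is correct and is essentially the paper's own: the paper proves this proposition precisely by rerunning Propositions \ref{injective} and \ref{bijective} with the recorded fact that a Hodge isometry $\Phi$ of $H^{2}(A,\Z)$ has $\det(\Phi)=1$ if and only if $\Phi|_{NS_{A}}\in S\Gamma_{A}$, which is exactly what you do (including the observation that $-{\rm id}_{NS_{A}}\in S\Gamma_{A}$ for the surjectivity step). Two small caveats, neither of which affects the logic: the equality $\det(f^{\ast})=+1$ is best justified via $H^{2}=\wedge^{2}H^{1}$ and the complex-linearity of $f^{\ast}$ on $H^{1}$, since orientation-preservation on the positive $3$-plane alone does not determine the determinant on a form of signature $(3,3)$; and your closing aside is false when $E_{1}\simeq E_{2}$, for then $(E_{1},E_{2})$ and $(E_{2},E_{1})$ \emph{are} strictly isomorphic and $\varphi$, $\varphi\circ\iota_{0}$ do lie in one $S\Gamma_{A}$-orbit, only via an isometry acting with determinant $-1$ on $\varphi(U)^{\perp}$ rather than by the identity extension.
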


For each lattice $T \in \mathcal{G}(T_{A})$ 
we can find an embedding 
$\varphi _{T} : U \hookrightarrow NS_{A}$ 
with 
$\varphi _{T}(U)^{\perp} \simeq T(-1)$. 
Then, as like the proof of Proposition \ref{a trick}, 
\begin{equation*}
 S\Gamma _{A} \backslash {\rm Emb\/}(U, NS_{A}) 
 = 
 \mathop{\bigsqcup}_{T \in \mathcal{G}(T_{A})}  S\Gamma _{A} \backslash ( O(NS_{A})\cdot \varphi _{T}  ) .  
\end{equation*}
The orbit 
$O(NS_{A})\cdot \varphi _{T}$ 
is decomposed as 
\begin{equation}\label{decomp of orbit}
O(NS_{A})\cdot \varphi _{T} = 
SO(NS_{A})\cdot \varphi _{T} \: \cup \: SO(NS_{A})\cdot (\varphi _{T} \circ \iota _{0}), 
\end{equation} 
where $\iota _{0}$ is the isometry of $U$ defined by the equation $(\ref{mutation isometry of hyperbolic plane})$.

\begin{lemma}
We have 
$\varphi _{T} \circ \iota _{0} \in SO(NS_{A})\cdot \varphi _{T}$ 
if and only if 
$SO(T) \not= O(T)$.  
\end{lemma}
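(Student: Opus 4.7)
The plan is to produce a canonical witness for the equality $\varphi_T \circ \iota_0 \in O(NS_A) \cdot \varphi_T$ and then track what determinants are available. Define
\[
\gamma_0 := (\varphi_T \circ \iota_0 \circ \varphi_T^{-1})|_{\varphi_T(U)} \oplus \mathrm{id}_{\varphi_T(U)^\perp} \in O(NS_A),
\]
which manifestly satisfies $\gamma_0 \circ \varphi_T = \varphi_T \circ \iota_0$. Since $\det(\iota_0) = -1$ and $\gamma_0$ acts trivially on the orthogonal complement, $\det(\gamma_0) = -1$, so $\gamma_0 \in O(NS_A) \setminus SO(NS_A)$.

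Next I would parametrise \emph{all} isometries $\gamma \in O(NS_A)$ that send $\varphi_T$ to $\varphi_T \circ \iota_0$. Any such $\gamma$ satisfies $\gamma \circ \gamma_0^{-1}$ fixes $\varphi_T(U)$ pointwise, hence splits as $\mathrm{id}_{\varphi_T(U)} \oplus \sigma$ for some $\sigma \in O(\varphi_T(U)^\perp) = O(T(-1)) = O(T)$, and conversely every such $\sigma$ yields a valid $\gamma$. Thus
\[
\{\gamma \in O(NS_A) \mid \gamma \circ \varphi_T = \varphi_T \circ \iota_0\} = \gamma_0 \cdot (\mathrm{id}_{\varphi_T(U)} \oplus O(T)),
\]
and the determinant of such a $\gamma$ equals $-\det(\sigma)$.

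Consequently $\varphi_T \circ \iota_0 \in SO(NS_A) \cdot \varphi_T$ if and only if some $\sigma \in O(T)$ has determinant $-1$, which is exactly the condition $SO(T) \neq O(T)$.

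There is essentially no obstacle here; the only thing to be mindful of is the canonical identification $O(T(-1)) = O(T)$ (rescaling the bilinear form does not affect the isometry group) so that the determinant of $\sigma$ is computed on $T$ itself, matching the statement's formulation.
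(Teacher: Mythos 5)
Your argument is correct and follows essentially the same route as the paper: both use the splitting $NS_{A} = \varphi_{T}(U) \oplus \varphi_{T}(U)^{\perp}$ (valid since $U$ is unimodular) to write any isometry carrying $\varphi_{T}$ to $\varphi_{T}\circ\iota_{0}$ as a block sum and then compare determinants, concluding that such an isometry lies in $SO(NS_{A})$ exactly when $O(T)$ contains an element of determinant $-1$. Your explicit parametrisation of all intertwiners via $\gamma_{0}$ is just a slightly more systematic packaging of the paper's "can be written as ... $\oplus\,\gamma'$" step.
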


\begin{proof}
If 
$\varphi _{T} \circ \iota _{0} = \gamma \circ \varphi _{T}$  
for some 
$\gamma \in SO(NS_{A})$, 
this $\gamma $ can be written as 
\begin{equation*}
\gamma = ( \varphi _{T} \circ \iota _{0} \circ \varphi _{T}^{-1} ) |_{\varphi _{T}(U)} \oplus \gamma ' 
\end{equation*}
for some $\gamma ' \in O(T(-1)) = O(T)$. 
Then 
${\rm det\/} (\gamma ') = {\rm det\/}(\gamma ) \cdot {\rm det\/}(\iota _{0})^{-1} = -1$ 
so that 
$SO(T) \not= O(T)$.  
The converse is proved similarly. 
\end{proof}

Therefore we have 
\begin{equation*}
\left| \: S\Gamma _{A} \backslash ( O(NS_{A})\cdot \varphi _{T}  )  \: \right|   = 
                                      \left\{ \begin{array}{cl} 
                                                 \left| \: S\Gamma _{A} \backslash SO(NS_{A}) / SO(T)  \: \right| ,            
                                                        												 &     \: \:  {\rm if\/}   \;  T \in \mathcal{G}_{1}(T_{A}),   \\ 
                                    2 \cdot \left| \: S\Gamma _{A} \backslash SO(NS_{A}) / SO(T)  \: \right| ,            
                                                        												 &     \: \:  {\rm if\/}   \;  T \in \mathcal{G}_{2}(T_{A}),   
                                                \end{array} \right. 
\end{equation*}
where 
$\mathcal{G}_{i}(T_{A})$ 
are the subsets of $\mathcal{G}(T_{A})$ 
defined in $(\ref{ambiguous class})$. 
Now an imitation of the proof of Proposition \ref{a trick} 
yields the following formula involving the proper genus $\widetilde{\mathcal{G}}(T_{A})$.

\begin{prop}\label{weak formula 2}
Let $A$ be an Abelian surface with $\rho (A) = 4$. 
Then 
\begin{equation*}\label{eqn: weak formula2}
\widetilde{\delta}(A) = 
\sum_{T \in \widetilde{\mathcal{G}}(T_{A})} \Bigl| \: SO(T_{A}) \backslash O(D_{T_{A}}) / SO(T) \: \Bigr| .
\end{equation*}
\end{prop}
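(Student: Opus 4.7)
The plan is to mirror the derivation of Proposition \ref{weak formula1} in the orientation-preserving setting, while accounting for the extra splitting identified in $(\ref{decomp of orbit})$ and the lemma just above the statement.

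Starting from Proposition \ref{bijective widetilde}, the problem reduces to computing $|\: S\Gamma_A \backslash {\rm Emb\/}(U, NS_A) \:|$. Using $(\ref{NS and T for Picard number 4})$ and Nikulin--Kneser uniqueness exactly as in the proof of Proposition \ref{a trick}, partition ${\rm Emb\/}(U, NS_A)$ into $O(NS_A)$-orbits indexed by $\mathcal{G}(T_A)$, picking a representative $\varphi_T$ with $\varphi_T(U)^{\perp} \simeq T(-1)$ for each $T$. Then $(\ref{decomp of orbit})$ breaks $O(NS_A)\cdot \varphi_T$ into at most two $SO(NS_A)$-orbits, and the preceding lemma records that these coincide precisely when $T \in \mathcal{G}_1(T_A)$. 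A stabilizer check identifies each $SO(NS_A)$-orbit of $\varphi_T$ with $SO(NS_A)/SO(T)$, since the $O(NS_A)$-stabilizer of $\varphi_T$ is $O(T)$, whose intersection with $SO(NS_A)$ is $SO(T)$. Combining these,
\begin{equation*}
\widetilde{\delta}(A) = \sum_{T \in \mathcal{G}_1(T_A)} |\: S\Gamma_A \backslash SO(NS_A)/SO(T) \:| + 2 \sum_{T \in \mathcal{G}_2(T_A)} |\: S\Gamma_A \backslash SO(NS_A)/SO(T) \:|,
\end{equation*}
and by $(\ref{genus and proper genus})$ this collapses to the single sum $\sum_{T \in \widetilde{\mathcal{G}}(T_A)} |\: S\Gamma_A \backslash SO(NS_A)/SO(T) \:|$.

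The last step is to transfer each double coset to $O(D_{T_A})$ via the discriminant homomorphism, mimicking the closing calculation in Proposition \ref{a trick}. Four points need verification: (i) $r_{NS}|_{SO(NS_A)}$ surjects onto $O(D_{NS_A})$, since $r_{NS}$ is surjective by Nikulin and $-{\rm id\/} \in SO(NS_A)$ (as ${\rm rank\/}(NS_A) = 4$) lies in ${\rm Ker\/}(r_{NS})$; (ii) ${\rm Ker\/}(r_{NS}) \cap SO(NS_A) \subset S\Gamma_A$ by the definition $(\ref{the group})$ of $\Gamma_A$; (iii) $r_{NS}(S\Gamma_A) = \lambda(r_T(SO(T_A)))$, using $(\ref{the group})$ and $(\ref{Hodge isometry and SO})$; and (iv) under the identification of $D_{NS_A}$ with $D_{T(-1)}$ coming from $NS_A = \varphi_T(U) \oplus \varphi_T(U)^{\perp}$, composed with $\lambda$ and a quadratic-form isomorphism $D_T \simeq D_{T_A}$ (available because $T$ and $T_A$ lie in the same genus), the embedded $SO(T) \subset SO(NS_A)$ maps to $r_T(SO(T)) \subset O(D_{T_A})$. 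Together these yield $|\: S\Gamma_A \backslash SO(NS_A)/SO(T) \:| = |\: SO(T_A) \backslash O(D_{T_A})/SO(T) \:|$ and finish the proof.

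The main obstacle I anticipate is the bookkeeping in (iv): the isomorphism $D_T \simeq D_{T_A}$ used to place both actions inside the single group $O(D_{T_A})$ is not canonical, so one must check that the resulting double coset cardinality is independent of the choice. This is where one has to be careful about the sign flip in $\lambda$ and the fact that $SO(T)$ depends only on the lattice and not on its orientation; different choices of the identification differ by conjugation inside $O(D_{T_A})$, which preserves double coset counts.
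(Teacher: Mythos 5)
Your overall route is the same as the paper's: reduce via Proposition \ref{bijective widetilde} to counting $S\Gamma_A\backslash {\rm Emb}(U,NS_A)$, split each $O(NS_A)$-orbit into $SO(NS_A)$-orbits as in $(\ref{decomp of orbit})$, use the lemma to distinguish $\mathcal{G}_1(T_A)$ from $\mathcal{G}_2(T_A)$, collapse the count to the proper genus via $(\ref{genus and proper genus})$, and then transfer the double cosets to $O(D_{T_A})$ by imitating Proposition \ref{a trick}. That is exactly the argument the paper intends, and your point (iv) about conjugation-independence of the double coset count is handled correctly.

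There is, however, one flawed justification in the transfer step, namely your point (i). You claim $r_{NS}|_{SO(NS_A)}$ is surjective onto $O(D_{NS_A})$ because $-\mathrm{id}\in SO(NS_A)$ lies in $\mathrm{Ker}(r_{NS})$. This is wrong on two counts: $-\mathrm{id}$ induces $-\mathrm{id}$ on $D_{NS_A}$, which is nontrivial except when $D_{NS_A}$ is $2$-torsion, so it is generally not in the kernel; and even if it were, it has determinant $+1$ in rank $4$, so composing with it cannot convert a determinant $-1$ lift of an element of $O(D_{NS_A})$ into a determinant $+1$ lift. The correct device is an isometry of determinant $-1$ acting trivially on the discriminant form, which exists because of the unimodular summand in $NS_A\simeq U\oplus T_A(-1)$: take $\iota_0\oplus\mathrm{id}_{T_A(-1)}$ (equivalently, the reflection in the $(-2)$-vector $e-f$ of the $U$ summand). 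With this element, $r_{NS}(SO(NS_A))=r_{NS}(O(NS_A))=O(D_{NS_A})$, and the same element (which lies in $\mathrm{Ker}(r_{NS})\subset\Gamma_A$) is what guarantees your point (iii), i.e.\ $r_{NS}(S\Gamma_A)=r_{NS}(\Gamma_A)=\lambda(r_T(SO(T_A)))$, since a priori restricting to determinant $+1$ could shrink the image. Once (i) and (iii) are repaired in this way, the rest of your argument goes through and agrees with the paper.
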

We will study this formula more closely 
in Section 5.

\section{The case of Picard number 3} 
\subsection{Counting formula}
Let $A$ be a decomposable Abelian surface with $\rho (A) = 3$. 
Then 
$NS_{A} \simeq U \oplus \langle -2N \rangle $ 
for some $N \in {\Z}_{>0}$. 
This natural number $N$ may be calculated by 
\begin{equation}\label{detNS for Picard number 3}
N = \frac{1}{2} {\rm det\/}(NS_{A}) = -\frac{1}{2} {\rm det\/}(T_{A}). 
\end{equation}
We also have the following.

\begin{prop}\label{geometric meaning of N}
Let $(E_{1}, E_{2})$ be a decomposition of $A$. 
Then 
\begin{equation}\label{eqn: geometric meaning of N}
N = 
{\rm min\/} \Bigl\{  \: {\rm deg\/}\phi \Bigr. \:  \: \Bigl|   \: \phi : E_{1} \to E_{2}  \:  \: {\rm isogeny\/}  \: \Bigr\} .  
\end{equation}
In particular, 
the right hand side of $(\ref{eqn: geometric meaning of N})$ 
is independent of the choice of $(E_{1}, E_{2})$.
\end{prop}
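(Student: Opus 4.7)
The plan is to identify the rank $1$ orthogonal complement $\varphi(U)^{\perp}$ in $NS_A$, where $\varphi \colon U \hookrightarrow NS_A$ is the embedding associated with $(E_1, E_2)$ by $(\ref{embedding associated to decomposition})$, with the group $\mathrm{Hom}(E_1, E_2)$ in such a way that the intersection form transports to the quadratic form $\phi \mapsto -2\deg \phi$. The minimum-degree statement then reduces to a lattice-theoretic statement about $\varphi(U)^{\perp}$.

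Since $U$ is unimodular, $\varphi(U)$ is a primitive sublattice of $NS_A$, giving an orthogonal splitting $NS_A = \varphi(U) \oplus \varphi(U)^{\perp}$. The complement is a negative-definite even lattice of rank $\rho(A) - 2 = 1$, so comparing with the assumed presentation $NS_A \simeq U \oplus \langle -2N \rangle$ yields $\varphi(U)^{\perp} \simeq \langle -2N \rangle$; in particular its generator has square $-2N$.

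Next, for each nonzero $\phi \in \mathrm{Hom}(E_1, E_2)$ I would introduce the class
\begin{equation*}
v_{\phi} := [\Gamma_{\phi}] - [E_1] - \deg(\phi) \cdot [E_2] \in NS_A,
\end{equation*}
where $\Gamma_{\phi} \subset A$ is the graph of $\phi$ under the fixed isomorphism $E_1 \times E_2 \simeq A$. Using $(\Gamma_{\phi}, [E_1]) = \deg \phi$, $(\Gamma_{\phi}, [E_2]) = 1$, $\Gamma_{\phi}^2 = 0$, $([E_i], [E_i]) = 0$ and $([E_1], [E_2]) = 1$, one checks at once that $v_{\phi} \in \varphi(U)^{\perp}$ and $(v_{\phi}, v_{\phi}) = -2 \deg \phi$. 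By the classical structure theorem for the N\'eron-Severi group of a product of elliptic curves, the map $\phi \mapsto v_{\phi}$ is in fact a group isomorphism $\mathrm{Hom}(E_1, E_2) \simeq \varphi(U)^{\perp}$.

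Finally, $\rho(A)=3$ forces $\mathrm{Hom}(E_1, E_2)$ to be infinite cyclic; let $\phi_0$ be one of its two generators. Every isogeny $E_1 \to E_2$ is then of the form $n \phi_0$ with $n \neq 0$ and has degree $n^2 \deg \phi_0 \geq \deg \phi_0$, so the minimum degree equals $\deg \phi_0$. On the other hand, $v_{\phi_0}$ is a generator of $\varphi(U)^{\perp}$ of square $-2 \deg \phi_0$, which by the first step forces $\deg \phi_0 = N$. The ``in particular'' clause is automatic because $N$ is defined intrinsically from $T_A$. The main technical input beyond the direct intersection computation is the surjectivity of $\phi \mapsto v_{\phi}$ onto $\varphi(U)^{\perp}$, equivalently the assertion that $[E_1]$, $[E_2]$, and the graph classes $[\Gamma_{\phi}]$ already exhaust $NS(E_1 \times E_2)$; I would cite this from the classical theory of Abelian surfaces rather than reprove it.
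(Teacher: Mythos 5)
Your argument is correct, but it routes the crucial existence step differently from the paper. For the lower bound ${\rm deg\/}\phi \geq N$ both proofs do essentially the same thing: decompose the graph class $[\Gamma_{\phi}]$ against $[E_{1}]$, $[E_{2}]$ and the rank-one complement, and read off the degree from the quadratic form (the paper writes $[\Gamma] = [E_{1}] + al + ({\rm deg\/}\phi)[E_{2}]$ and uses isotropy to get ${\rm deg\/}\phi = a^{2}N$, which is your computation $(v_{\phi}, v_{\phi}) = -2\,{\rm deg\/}\phi$ in $\langle -2N \rangle$). Where you diverge is the upper bound, i.e.\ producing an isogeny of degree exactly $N$: the paper stays inside its lattice-theoretic machinery and applies Lemma \ref{prim isotropic vector} to the primitive isotropic class $[E_{1}] + l + N[E_{2}]$, obtaining an elliptic curve $E$ whose two projections have degrees $1$ and $N$, hence an isogeny $E_{1} \to E \to E_{2}$ of degree $N$; you instead invoke the classical structure theorem $NS(E_{1}\times E_{2}) \simeq \Z[E_{1}] \oplus \Z[E_{2}] \oplus \mathrm{Hom}(E_{1}, E_{2})$, whose surjectivity statement guarantees that the generator of $\varphi(U)^{\perp}$, of square $-2N$, is $v_{\phi_{0}}$ for an actual homomorphism $\phi_{0}$ of degree $N$. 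Both are legitimate: your version packages the whole proposition into the isometry $(\mathrm{Hom}(E_{1},E_{2}), -2\deg) \simeq \langle -2N \rangle$ and is arguably cleaner conceptually, at the cost of importing a nontrivial external fact (which you correctly flag rather than reprove); the paper's version is self-contained given its Lemma \ref{prim isotropic vector}, and its explicit curve $E$ is closer in spirit to the constructive use it later makes of decompositions in Section 4.2. One small point to keep honest in your write-up: additivity of $\phi \mapsto v_{\phi}$ (equivalently, that $v_{\phi}$ is the middle K\"unneth component of $[\Gamma_{\phi}]$) is part of what the cited structure theorem provides, so make sure the reference covers both additivity and surjectivity onto the orthogonal complement of $\langle [E_{1}], [E_{2}] \rangle$.
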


\begin{proof}
Let $l \in NS_{A}$ be a generator of the rank $1$ lattice  
\begin{equation*}
({\Z}[E_{1}] + {\Z}[E_{2}])^{\perp} \cap NS_{A}  \simeq \langle -2N \rangle .  
\end{equation*} 
The class 
$[E_{1}] + l + N[E_{2}] \in NS_{A}$ 
is a primitive isotropic vector contained in the closure of $\mathcal{C}_{A}^{+}$ 
so that 
there exists an elliptic curve $E$ in $A$ with 
$[E] = [E_{1}] + l + N[E_{2}]$.  
Since 
$([E], [E_{2}]) = 1$ (resp. $([E], [E_{1}]) = N$), 
the degree of the projection 
$E \to E_{1}$ (resp. $E \to E_{2}$) 
is $1$ (resp. $N$). 
Thus we obtain an isogeny 
$E_{1} \to E \to E_{2}$ 
of degree $N$. 

Conversely, 
let $\phi : E_{1} \to E_{2}$ be an arbitrary isogeny. 
Its graph $\Gamma \subset A$ is an elliptic curve satisfying 
$([\Gamma ], [E_{2}]) = 1$ and $([\Gamma ], [E_{1}]) = {\rm deg\/}\phi $. 
We can write 
$[\Gamma ] = [E_{1}] + al + ({\rm deg\/}\phi ) [E_{2}] $ 
for some $a \in {\Z}$. 
Then we have 
${\rm deg\/}\phi  = a^{2}N  \geq N$.  
\end{proof}

It follows that 
\begin{equation*}\label{relation of two numbers for Picard number 3}
\widetilde{\delta }(A) = 
                                      \left\{ \begin{array}{cc} 
                                                 \delta (A),                         &      \: \:        \text{if} \: \:     N=1,  \\ 
                                                 2\delta (A),                       &      \: \:        \text{if} \: \:     N>1.   
                                                \end{array} \right. 
\end{equation*}

\begin{prop}[cf. \cite{Ha}]\label{decomp number for Pic number 3}
Let $A$ be a decomposable Abelian surface with $\rho (A) = 3$ and ${\rm det\/}(T_{A}) = -2N$. 
Then $\delta (A) = 2^{\tau (N) -1}$. 
We also have 
\begin{equation*}\label{eqn: decomp number for Picard number 3}
\widetilde{\delta }(A) = 
                                      \left\{ \begin{array}{cc} 
                                                 1,                                              &     \: \:        \text{if} \: \:  N=1,  \\ 
                                                 2^{\tau (N) },                       &      \: \:       \text{if} \: \: N>1.   
                                                \end{array} \right. 
\end{equation*}
\end{prop}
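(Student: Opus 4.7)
The plan is to specialize Proposition~\ref{weak formula1} to $L = \langle -2N \rangle$ and then evaluate the resulting double coset on the cyclic discriminant form $D_L$, before combining with the relation between $\delta$ and $\widetilde{\delta}$ derived above.

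Since an even negative-definite rank-one lattice is uniquely determined up to isometry by its discriminant, $\mathcal{G}(L) = \{L\}$ and the sum in Proposition~\ref{weak formula1} collapses to a single term,
\begin{equation*}
\delta(A) = \bigl| \, r_T(O_{Hodge}(T_A)) \backslash O(D_L) / r_L(O(L)) \, \bigr| .
\end{equation*}
The form $D_L \simeq \Z/2N\Z$ with $q(x) = -x^2/(2N) \bmod 2\Z$ is cyclic, so every automorphism is multiplication by a unit $a \in (\Z/2N\Z)^\times$; the isometry condition on $q$ becomes $a^2 \equiv 1 \pmod{4N}$. A standard CRT computation, split into odd-prime and $2$-adic contributions to $4N$ and then reduced modulo $2N$, yields $|O(D_L)| = 1$ for $N = 1$ and $|O(D_L)| = 2^{\tau(N)}$ for $N > 1$.

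The right subgroup $r_L(O(L))$ equals $\{\pm 1\}$, since $O(L) = \{\pm\mathrm{id}\}$ and $-\mathrm{id}$ acts as $-1$ on $D_L$. The left subgroup $r_T(O_{Hodge}(T_A))$ automatically contains $\{\pm 1\}$ through $\pm\mathrm{id}_{T_A}$, and the main task is to show the reverse inclusion. For this, I would decompose $T_A \otimes \R = V^+ \oplus V^-$ into the signature-$(2,0)$ plane spanned by $\mathrm{Re}\,\omega$ and $\mathrm{Im}\,\omega$ (with $\omega$ a generator of $H^{2,0}(A)$) and the signature-$(0,1)$ transcendental $(1,1)$-line. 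Any Hodge isometry $\gamma$ acts on $V^+$ as a rotation $e^{i\theta}$ and on $V^-$ as $\pm 1$; integrality forces $2\cos\theta \in \Z$, and the cases $\theta \in \{\pi/3, \pi/2, 2\pi/3\}$ would produce an automorphism of $A$ of order $3$, $4$, or $6$ acting non-trivially on $T_A$, which for an Abelian surface forces $\rho(A) = 4$, contradicting our hypothesis. Hence $\gamma|_{V^+} = \pm 1$, and a short integer matrix check in a basis of $T_A$ shows that the induced action on $D_{T_A}$ is also $\pm 1$.

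Since both subgroups equal $\{\pm 1\}$ in the abelian group $O(D_L)$, the double coset is just $O(D_L)/\{\pm 1\}$, of order $1$ when $N = 1$ and $2^{\tau(N)-1}$ when $N > 1$, which is the formula for $\delta(A)$. The formula for $\widetilde{\delta}(A)$ then follows immediately from the relation $\widetilde{\delta}(A) = \delta(A)$ or $2\delta(A)$ (according as $N = 1$ or $N > 1$) derived from Proposition~\ref{geometric meaning of N}. The principal obstacle is controlling the image of $O_{Hodge}(T_A)$ in $O(D_{T_A})$, namely ruling out extra Hodge isometries corresponding to non-trivial integer rotations on $V^+$, which rests crucially on the Picard-number hypothesis $\rho(A) = 3$.
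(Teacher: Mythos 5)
Your skeleton is the same as the paper's: specialize Proposition~\ref{weak formula1} to $L=\langle -2N\rangle$, note the genus of a rank-one lattice is a single class, compute $|O(D_{L})|$ ($=1$ for $N=1$, $2^{\tau(N)}$ for $N>1$), identify both subgroups of the double coset with $\{\pm 1\}$, and pass from $\delta$ to $\widetilde{\delta}$ via Proposition~\ref{geometric meaning of N}. The gap is in the one step that carries all the weight, the identification $r_{T}(O_{Hodge}(T_{A}))=\{\pm 1\}$, which the paper disposes of by citing Appendix B of \cite{H-L-O-Y} (rank $3$ is odd, hence $O_{Hodge}(T_{A})=\{\pm\mathrm{id}\}$). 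Your substitute argument fails at two points. First, a Hodge isometry of $T_{A}$ is merely an isometry of that lattice compatible with the Hodge structure; it need not extend to $H^{2}(A,\Z)$, let alone be induced by an automorphism of $A$ --- controlling which isometries extend is precisely the role of $\Gamma_{A}$ and Nikulin's gluing, and automorphisms are only obtained from Hodge isometries of all of $H^{2}$ via Shioda's Torelli theorem under determinant and cone conditions. So ``would produce an automorphism of $A$ of order $3$, $4$, or $6$'' is a non sequitur, and the further claim that such an automorphism would force $\rho(A)=4$ is itself asserted without proof.

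Second, even after excluding $\theta\in\{\pi/3,\pi/2,2\pi/3\}$ you must still rule out isometries acting by $+1$ on $V^{+}$ and $-1$ on $V^{-}$ (or vice versa); these satisfy $\gamma|_{V^{+}}=\pm 1$ but are not $\pm\mathrm{id}$, and your claim that a ``short integer matrix check'' shows they act by $\pm 1$ on $D_{T_{A}}$ is false for general isometries of this shape (e.g.\ $\mathrm{diag}(1,1,-1)$ on $\langle 2\rangle\oplus\langle 2\rangle\oplus\langle -6\rangle$ is not $\pm 1$ on the discriminant group). The correct reason both the rotation cases and the mixed-sign case are impossible is Hodge-theoretic, not matrix-theoretic: for $\theta\notin\{0,\pi\}$ the eigenspace of $\gamma$ for the rational eigenvalue $\pm 1$, and in the mixed-sign case the eigenspace equal to $V^{-}$, is a nonzero subspace of $T_{A}\otimes\Q$ defined over $\Q$ and orthogonal to $\omega$ and $\bar{\omega}$; scaling a rational eigenvector gives a nonzero integral class of $T_{A}$ of type $(1,1)$, which by the Lefschetz $(1,1)$ theorem lies in $NS_{A}\cap T_{A}=0$, a contradiction. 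With that argument (or simply the citation the paper uses) in place, the remainder of your computation --- the singleton genus, the count $|O(D_{\langle -2N\rangle})|$, $r_{L}(O(L))=\{\pm 1\}$, and the reduction of $\widetilde{\delta}(A)$ to $\delta(A)$ --- is correct and coincides with the paper's proof.
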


\begin{proof}
The right hand side of the formula $(\ref{eqn: weak formula1})$ can be written as  
\begin{equation*}
\Bigl| \: O_{Hodge}(T_{A}) \backslash O(D_{\langle -2N \rangle}) / O(\langle -2N \rangle ) \: \Bigr| . 
\end{equation*}
As ${\rm rk\/}(T_{A}) = 3$ is odd,  
it follows from Appendix B of \cite{H-L-O-Y} that 
$O_{Hodge}(T_{A}) = \{ \pm {\rm id\/} \} $.  
The isometry group $O(\langle -2N \rangle )$ is clearly $\{ \pm {\rm id\/} \} $. 
Since 
$D_{\langle -2N \rangle} = \langle \frac{-1}{2N} \rangle \simeq {\Z}/2N{\Z}$,  
we have (cf. \cite{H-L-O-Y}) 
\begin{equation*}
\left| \: O(D_{\langle -2N \rangle}) \: \right|  =  
                                      \left\{ \begin{array}{cc} 
                                                 1,                                              &     N=1,  \\ 
                                                 2^{\tau (N) },                       &      N>1.   
                                                \end{array} \right. 
\end{equation*}
\end{proof}

Note that $\widetilde{\delta}(A)$ can be represented simply as 
\begin{equation*}
\widetilde{\delta}(A) = | O(D_{NS_{A}}) | = | O(D_{T_{A}}) | . 
\end{equation*}
Proposition \ref{decomp number for Pic number 3} was first proved by Hayashida \cite{Ha}.   
Hayashida defined the number $N$ as the minimal degree of isogeny $E \to F$,  
where $(E, F)$ is a decomposition of $A$.

\subsection{Construction of decompositions}

Let $A = E_{1}\times E_{2}$ be a decomposable Abelian surface with $\rho (A) = 3$. 
We shall construct representatives of ${\rm Dec\/} (A)$ from $(E_{1}, E_{2})$.  
Let $e := [E_{1}]$, $f := [E_{2}]$, 
and  $l$ be a generator of the lattice 
$\langle e, f \rangle ^{\perp} \cap NS_{A}$.  
Firstly 
we construct representatives of 
the quotient set $\Gamma _{A} \backslash {\rm Emb\/}(U, NS_{A})$ 
as follows. 
Let 
\begin{equation}\label{def of Sigma}
\Sigma := \Bigl\{  \; ( r_{\sigma}, s_{\sigma}) \; \Bigr. \Bigl|    \;     r_{\sigma}, s_{\sigma} \in {\Z}_{>0},  
                                                                                                                       \; ( r_{\sigma}, s_{\sigma}) =1, \;  
                                                                                                                                r_{\sigma}s_{\sigma}=N, \;
                                                                                                                               r_{\sigma} \leq s_{\sigma} \; \Bigr\} .
\end{equation}
We have $|\Sigma | = 2^{\tau (n)-1}$.  
For each $\sigma \in \Sigma $ 
choose integers $a_{\sigma}, b_{\sigma} \in {\Z}$ satisfying 
$a_{\sigma}r_{\sigma} + b_{\sigma}s_{\sigma} =1$ 
and put 
\begin{eqnarray}
e_{\sigma}  & := &   r_{\sigma}e                                +  s_{\sigma}f                               + l ,                                              \label{new emb 1}    \\
f_{\sigma}   & := &   b_{\sigma}^{2}s_{\sigma}e + a_{\sigma}^{2}r_{\sigma}f - a_{\sigma}b_{\sigma}l ,   \label{new emb 2}   \\
l_{\sigma}    & := &  2Nb_{\sigma}e                           -2Na_{\sigma}f                            + (b_{\sigma}s_{\sigma} - a_{\sigma}r_{\sigma})l.
\end{eqnarray}
The vectors  
$e_{\sigma}$, $f_{\sigma}$ 
define an embedding 
$\varphi _{\sigma} : U \hookrightarrow NS_{A} $ 
with 
$\varphi _{\sigma} ( U )^{\perp} = {\Z}l_{\sigma}$.

\begin{lemma}\label{reps of Emb set}
The set 
$\{ \varphi _{\sigma} \} _{\sigma \in \Sigma}$ 
of embeddings represents 
$\Gamma _{A} \backslash {\rm Emb\/}(U, NS_{A})$
completely. 
\end{lemma}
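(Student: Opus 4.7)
The plan is to combine a counting argument with an explicit invariant in the discriminant form. From Proposition \ref{bijective} and Proposition \ref{decomp number for Pic number 3} we already know
\[
| \: \Gamma_A \backslash {\rm Emb\/}(U, NS_A) \: | \: = \: \delta(A) \: = \: 2^{\tau(N)-1} \: = \: |\Sigma|,
\]
so the lemma reduces to two claims: (i) each $\varphi_\sigma$ is an embedding of $U$ into $NS_A$ with $\varphi_\sigma(U)^\perp = \mathbb{Z}l_\sigma$; (ii) the $\varphi_\sigma$ lie in pairwise distinct $\Gamma_A$-orbits.

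Step (i) is a routine algebraic verification resting on $a_\sigma r_\sigma + b_\sigma s_\sigma = 1$ and $r_\sigma s_\sigma = N$; the key intermediate identity $(b_\sigma s_\sigma - a_\sigma r_\sigma)^2 = 1 - 4N a_\sigma b_\sigma$ yields both $(l_\sigma, l_\sigma) = -2N$ and $(\varphi_\sigma(e), \varphi_\sigma(f)) = 1$.

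For step (ii) I would attach a discriminant-form invariant to each $\varphi_\sigma$ and compute it. Since $U$ is unimodular, the primitive embedding $\varphi_\sigma(U) \hookrightarrow NS_A$ splits, so $NS_A = \varphi_\sigma(U) \oplus \mathbb{Z}l_\sigma$ and the inclusion $\mathbb{Z}l_\sigma \hookrightarrow NS_A$ induces a canonical isometry $D_{\mathbb{Z}l_\sigma} \xrightarrow{\sim} D_{NS_A}$. A short calculation shows that this isometry sends the generator $l_\sigma/(2N)$ to $c_\sigma \cdot (l/(2N)) \in D_{NS_A}$, where $c_\sigma \equiv b_\sigma s_\sigma - a_\sigma r_\sigma \pmod{2N}$; one further checks that $c_\sigma \pmod{2N}$ is independent of the choice of Bezout coefficients and that $c_\sigma^2 \equiv 1 \pmod{4N}$, so $c_\sigma$ defines an element of $O(D_{NS_A}) \subset (\mathbb{Z}/2N\mathbb{Z})^\times$. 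Unwinding the double-coset description in the proof of Proposition \ref{a trick}, the $\Gamma_A$-orbit of $\varphi_\sigma$ corresponds to the class of $c_\sigma$ in the double coset space $r_{NS}(\Gamma_A) \backslash O(D_{NS_A}) / \{\pm 1\}$, where the right-hand $\{\pm 1\}$ is the image of $O(\mathbb{Z}l_\sigma)$ under the canonical isometry $D_{\mathbb{Z}l_\sigma} \simeq D_{NS_A}$. Since ${\rm rk\/}(T_A) = 3$ forces $O_{Hodge}(T_A) = \{\pm {\rm id\/}\}$ (cf.\ the proof of Proposition \ref{decomp number for Pic number 3}), one also has $r_{NS}(\Gamma_A) = \{\pm {\rm id\/}\}$, and the double coset space simplifies to the abelian quotient $O(D_{NS_A})/\{\pm 1\}$.

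It remains to show $\sigma \mapsto [c_\sigma]$ is injective on $\Sigma$. The congruences $c_\sigma \equiv 1 \pmod{r_\sigma}$ and $c_\sigma \equiv -1 \pmod{s_\sigma}$, both immediate from $a_\sigma r_\sigma + b_\sigma s_\sigma = 1$, show that for every prime power $p^k \parallel N$ one has $c_\sigma \equiv +1 \pmod{p^k}$ when $p^k \mid r_\sigma$ and $c_\sigma \equiv -1 \pmod{p^k}$ when $p^k \mid s_\sigma$. Thus $c_\sigma \pmod N$ records the factorization $N = r_\sigma s_\sigma$, while $-c_\sigma \pmod N$ records the opposite factorization $N = s_\sigma r_\sigma$; the constraint $r_\sigma \leq s_\sigma$ in the definition of $\Sigma$ then selects one representative from each such pair, giving the required injectivity. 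The main delicate point of the proof is the identification of the $\Gamma_A$-orbits with $O(D_{NS_A})/\{\pm 1\}$ via the invariant $c_\sigma$; once this is in place, the prime-by-prime analysis of $c_\sigma$ is entirely elementary.
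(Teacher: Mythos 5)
Your strategy is essentially the paper's: count $|\Gamma_A\backslash{\rm Emb}(U,NS_A)|=2^{\tau(N)-1}=|\Sigma|$ and then separate the orbits by the class of $l_\sigma/2N$ in $D_{NS_A}$, i.e.\ by the unit $c_\sigma\equiv b_\sigma s_\sigma-a_\sigma r_\sigma \pmod{2N}$, using that $\Gamma_A$ acts on $D_{NS_A}$ through $\{\pm{\rm id}\}$ (this is exactly the paper's computation $\frac{l_\sigma}{2N}\equiv(b_\sigma s_\sigma-a_\sigma r_\sigma)\frac{l}{2N}$, and your identification of the orbit of $\varphi_\sigma$ with the class of $c_\sigma$ in $O(D_{NS_A})/\{\pm 1\}$ is a correct unwinding of Proposition \ref{a trick}, since $r_{NS}(\Gamma_A)=\{\pm{\rm id}\}$ and $O(D_{NS_A})$ is abelian).

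There is, however, a gap in your final injectivity step when $2$ exactly divides $N$. You reduce $c_\sigma$ modulo $N$ and argue prime power by prime power using only $c_\sigma\equiv 1\pmod{r_\sigma}$ and $c_\sigma\equiv-1\pmod{s_\sigma}$. At an odd prime power, or at $2^k$ with $k\geq 2$, the signs $+1$ and $-1$ are distinct, but at $2^1$ they coincide, so for $N\equiv 2\pmod 4$ the residue $c_\sigma \bmod N$ does \emph{not} record which of $r_\sigma,s_\sigma$ carries the factor $2$. Concretely, for $N=6$ one finds $c_{(1,6)}\equiv-1\equiv 5\pmod 6$ and $c_{(2,3)}\equiv 5\pmod 6$, so the invariant as you use it fails to separate the two elements of $\Sigma$, although the lemma requires this; they are separated modulo $12$ ($11$ versus $5$, and $\pm 11\not\equiv\pm 5$). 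The repair stays entirely inside your framework: from $a_\sigma r_\sigma+b_\sigma s_\sigma=1$ one has the sharper congruences $c_\sigma=1-2a_\sigma r_\sigma\equiv 1\pmod{2r_\sigma}$ and $c_\sigma=2b_\sigma s_\sigma-1\equiv -1\pmod{2s_\sigma}$, so at the modulus $2^{k+1}$, where $2^k$ exactly divides $N$, the two signs are again distinct and $c_\sigma\bmod 2N$, up to an overall sign, determines the unordered pair $\{r_\sigma,s_\sigma\}$, hence $\sigma$. This is in effect how the paper's proof treats the even case: it works with the full group $D_{NS_A}\simeq{\Z}/2N{\Z}$, whose $2$-primary component is ${\Z}/2^{k+1}{\Z}$ rather than ${\Z}/2^{k}{\Z}$.
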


\begin{proof}
As 
$| \Sigma | 
 =   | \, \Gamma _{A} \backslash {\rm Emb\/}(U, NS_{A}) \, | 
 =   2^{\tau (N)-1}$, 
it suffices to show that  
$\varphi _{\sigma} \not\in \Gamma _{A} \cdot \varphi _{\sigma '}$ 
if 
$\sigma \not= \sigma '$. 
There are exactly two isometries of $NS_{A}$, 
say $\gamma  _{\sigma ', \sigma }^{+}$  and  $\gamma  _{\sigma ', \sigma }^{-}$,  
satisfying 
$\gamma  _{\sigma ', \sigma }^{\pm } \circ \varphi _{\sigma} = \varphi _{\sigma '}$ :  
\begin{equation*}
\gamma  _{\sigma ', \sigma }^{\pm } (e_{\sigma})  = e_{\sigma '}, \: \: \: 
\gamma  _{\sigma ', \sigma }^{\pm } (f_{\sigma})  = f_{\sigma '}, \: \: \: 
\gamma  _{\sigma ', \sigma }^{\pm } (l_{\sigma})  = \pm l_{\sigma '} . 
\end{equation*}
When $N$ is odd, 
$D_{NS_{A}} \simeq {\Z}/2N{\Z}$ is decomposed as 
\begin{equation}\label{decompositio of cyclic disc form}
D_{NS_{A}} \simeq {\Z}/2{\Z} \oplus \mathop{\bigoplus }_{i=1}^{\tau (N)} {\Z}/p_{i}^{e_{i}}{\Z}, 
\end{equation} 
where 
$N=\prod p_{i}^{e_{i}}$ is the prime decomposition of $N$. 
We write 
\begin{equation*}
\frac{l}{2N} = x_{0} + x_{1} + \cdots + x_{\tau (N)}   \; \in D_{NS_{A}} 
\end{equation*} 
with respect to this decomposition $(\ref{decompositio of cyclic disc form})$. 
Direct calculations show that 
\begin{equation*}
\frac{l_{\sigma}}{2N}  \equiv  (b_{\sigma}s_{\sigma}-a_{\sigma}r_{\sigma}) \frac{l}{2N} 
= x_{0} + \Bigl(  \sum_{p_{i}|r_{\sigma}} x_{i}\Bigr)  -  \Bigl(  \sum_{p_{j}|s_{\sigma}} x_{j}\Bigr) .
\end{equation*}
Now, if  
$r(\gamma  _{\sigma ', \sigma }^{\pm } ) \in \{ \pm {\rm id\/} \} \subset O(D_{NS_{A}})$, 
we must have 
$r_{\sigma} =  r_{\sigma '} $ or $\, r_{\sigma} =  s_{\sigma '} $,  
which implies that $\sigma = \sigma '$ by the definition of $\Sigma $.   
Thus we have $\sigma = \sigma '$ if $\varphi _{\sigma '} \in \Gamma _{A} \cdot \varphi _{\sigma}$.   
The argument if  $N$ is even is similar. 
\end{proof}

Next we find decompositions corresponding to the embeddings 
$\{ \varphi _{\sigma} \} $. 
Let $\phi : E_{1} \to E_{2}$ be an isogeny of degree $N$,   
the existence of which is guaranteed by Proposition \ref{geometric meaning of N}. 
Its kernel 
$G := {\rm Ker\/}(\phi) \subset E_{1}$ 
is a cyclic group of order $N$     
and is uniquely determined by the ordered pair $(E_{1}, E_{2})$.  
Let  
\begin{equation}\label{components of Kernel group}
G = \mathop{\bigoplus }_{i=1}^{\tau (N)}  G_{i}, \; \; \; \;  
|G_{i}| = p_{i}^{e_{i}}, 
\end{equation}
be the decomposition of G into $p$-groups and put 
\begin{equation}\label{decomp of Kernel group}
G_{\sigma , 1} := \mathop{\bigoplus }_{p_{i}|r_{\sigma}}  G_{i}, \: \: \:  \: \: 
G_{\sigma , 2} := \mathop{\bigoplus }_{p_{j}|s_{\sigma}}  G_{j}.   
\end{equation}
We have a canonical decomposition 
$G = G_{\sigma , 1} \oplus G_{\sigma , 2}$.  
If we denote 
\begin{equation}\label{def of new elliptic curve}
E_{\sigma , i} := E_{1} / G_{\sigma , i},  \; \; \; \;   i=1,2,   
\end{equation}
then the isogeny $\phi : E_{1} \to E_{2}$ can be factorized as  
\begin{equation}\label{factorization}
E_{1} \stackrel{\phi _{\sigma , i}^{+}}{\longrightarrow} 
E_{\sigma , i}  \stackrel{\phi _{\sigma , i}^{-}}{\longrightarrow} E_{2},  \; \; \; \;   i=1,2  
\end{equation}
with 
\begin{equation*}
{\rm deg\/}(\phi _{\sigma , 1}^{+}) = {\rm deg\/}(\phi _{\sigma , 2}^{-}) = r_{\sigma},      \: \: \: \:  
{\rm deg\/}(\phi _{\sigma , 2}^{+}) = {\rm deg\/}(\phi _{\sigma , 1}^{-}) = s_{\sigma}.   
\end{equation*}

Let  
$\widehat{\phi _{\sigma , i}^{\pm}}$  
be the dual isogeny of  $\phi _{\sigma , i}^{\pm}$.

\begin{lemma}\label{relation of isogenies}
We have 
$
\phi _{\sigma , 2}^{+} \circ \widehat{\phi _{\sigma , 1}^{+}}  =  
\widehat{\phi _{\sigma , 2}^{-}}  \circ  \phi _{\sigma , 1}^{-}  
:  E_{\sigma ,1} \to E_{\sigma , 2}$.   
\end{lemma}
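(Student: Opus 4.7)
The plan is to verify the identity by precomposing both sides with the isogeny $\phi_{\sigma,1}^{+}:E_{1}\to E_{\sigma,1}$ and reducing everything to the same map $E_{1}\to E_{\sigma,2}$; since $\phi_{\sigma,1}^{+}$ is surjective, the equality will follow.

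Concretely, I would proceed as follows. For the left-hand side, apply the fundamental identity $\widehat{\phi_{\sigma,1}^{+}}\circ \phi_{\sigma,1}^{+}=[\deg\phi_{\sigma,1}^{+}]_{E_{1}}=[r_{\sigma}]_{E_{1}}$, together with the compatibility $\phi_{\sigma,2}^{+}\circ [r_{\sigma}]_{E_{1}}=[r_{\sigma}]_{E_{\sigma,2}}\circ \phi_{\sigma,2}^{+}$, to obtain
\begin{equation*}
\bigl(\phi_{\sigma,2}^{+}\circ \widehat{\phi_{\sigma,1}^{+}}\bigr)\circ \phi_{\sigma,1}^{+}=[r_{\sigma}]_{E_{\sigma,2}}\circ \phi_{\sigma,2}^{+}.
\end{equation*}
For the right-hand side, use the factorizations $\phi=\phi_{\sigma,1}^{-}\circ \phi_{\sigma,1}^{+}=\phi_{\sigma,2}^{-}\circ \phi_{\sigma,2}^{+}$ given in $(\ref{factorization})$, together with $\widehat{\phi_{\sigma,2}^{-}}\circ \phi_{\sigma,2}^{-}=[\deg\phi_{\sigma,2}^{-}]_{E_{\sigma,2}}=[r_{\sigma}]_{E_{\sigma,2}}$, which gives
\begin{equation*}
\bigl(\widehat{\phi_{\sigma,2}^{-}}\circ \phi_{\sigma,1}^{-}\bigr)\circ \phi_{\sigma,1}^{+}=\widehat{\phi_{\sigma,2}^{-}}\circ \phi=\widehat{\phi_{\sigma,2}^{-}}\circ \phi_{\sigma,2}^{-}\circ \phi_{\sigma,2}^{+}=[r_{\sigma}]_{E_{\sigma,2}}\circ \phi_{\sigma,2}^{+}.
\end{equation*}
Thus both homomorphisms in the claim agree after precomposition with $\phi_{\sigma,1}^{+}$.

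Finally, since $\phi_{\sigma,1}^{+}$ is a surjective morphism of elliptic curves, it is epic in the category of Abelian varieties: any two morphisms $E_{\sigma,1}\to E_{\sigma,2}$ that coincide after composing with $\phi_{\sigma,1}^{+}$ must be equal (their difference is a morphism whose kernel contains the image of $\phi_{\sigma,1}^{+}$, which is all of $E_{\sigma,1}$). This yields the desired identity.

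The steps involved are all routine consequences of $\widehat{\alpha}\circ\alpha=[\deg\alpha]$ and of the chosen factorization of $\phi$; the only point that requires a moment's care is the compatibility $\phi_{\sigma,2}^{+}\circ [r_{\sigma}]_{E_{1}}=[r_{\sigma}]_{E_{\sigma,2}}\circ \phi_{\sigma,2}^{+}$, which is simply the fact that isogenies are group homomorphisms and hence commute with multiplication-by-integer maps. I do not foresee any genuine obstacle.
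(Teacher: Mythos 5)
Your proof is correct and is essentially the mirror image of the paper's argument: the paper sets $\varphi := \phi_{\sigma,2}^{+}\circ\widehat{\phi_{\sigma,1}^{+}} - \widehat{\phi_{\sigma,2}^{-}}\circ\phi_{\sigma,1}^{-}$, post-composes with $\phi_{\sigma,2}^{-}$ to get zero, and concludes from the discreteness of ${\rm Ker}(\phi_{\sigma,2}^{-})$ together with the connectedness of $E_{\sigma,1}$, whereas you pre-compose with $\phi_{\sigma,1}^{+}$ and cancel it by surjectivity. Both arguments rest on the same dual-isogeny identity $\widehat{\alpha}\circ\alpha=[\deg\alpha]$ (resp.\ $\alpha\circ\widehat{\alpha}=[\deg\alpha]$) and the two factorizations of $\phi$, so your route is equally valid and needs no repair.
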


\begin{proof}
Let 
\begin{equation*}
\varphi :=  
\phi _{\sigma , 2}^{+} \circ \widehat{\phi _{\sigma , 1}^{+}}  -  
 \widehat{\phi _{\sigma , 2}^{-}}  \circ  \phi _{\sigma , 1}^{-}  :   
 E_{\sigma ,1} \longrightarrow E_{\sigma , 2}. 
 \end{equation*}
 Since 
 \begin{equation*}
 \phi _{\sigma , 2}^{-} \circ  \varphi  =  
 \phi _{\sigma , 1}^{-} \circ \phi _{\sigma , 1}^{+} \circ \widehat{\phi _{\sigma , 1}^{+}}  -  
 r_{\sigma}\phi _{\sigma , 1}^{-}  =  
 \phi _{\sigma , 1}^{-} r_{\sigma}  -  r_{\sigma}\phi _{\sigma , 1}^{-}  
 = 0,  
 \end{equation*}
 then we have  
 $\varphi (E_{\sigma ,1}) \subset {\rm Ker\/}(\phi _{\sigma , 2}^{-})$.   
 By the discreteness of ${\rm Ker\/}(\phi _{\sigma , 2}^{-} )$ 
 we conclude that   
 $\varphi (E_{\sigma ,1}) = \{ 0 \} $. 
 \end{proof}

Compared with the factorizations $(\ref{factorization})$,  
Lemma \ref{relation of isogenies} represents a symmetry 
between the pairs 
$(E_{1}, E_{2})$ and  
 $(E_{\sigma , 1},  E_{\sigma , 2})$.

 \begin{prop}\label{they are decompositions}
 The homomorphism    
 \begin{equation}\label{hom giving decomposition}
\alpha _{\sigma} :=  \begin{pmatrix}  \widehat{\phi _{\sigma , 1}^{+}}   &   b_{\sigma}\widehat{\phi _{\sigma , 2}^{+}}    \\
                                                              -\phi _{\sigma , 1}^{-}                    &  a_{\sigma}\phi _{\sigma , 2}^{-}                      \end{pmatrix} 
								: E_{\sigma , 1} \times  E_{\sigma , 2}  
								\longrightarrow 
								  E_{1} \times  E_{2}  
\end{equation}
is an isomorphism.  
In other words,  the pair 
$(  \alpha _{\sigma} (E_{\sigma , 1}),   \alpha _{\sigma}(E_{\sigma , 2})  )$  
gives a decomposition of $A$. 
\end{prop}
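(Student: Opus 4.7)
The plan is to exhibit an explicit two-sided inverse. I would set
$$\beta_{\sigma} := \begin{pmatrix} a_{\sigma}\phi_{\sigma,1}^{+} & -b_{\sigma}\widehat{\phi_{\sigma,1}^{-}} \\ \phi_{\sigma,2}^{+} & \widehat{\phi_{\sigma,2}^{-}} \end{pmatrix} : E_{1} \times E_{2} \longrightarrow E_{\sigma,1} \times E_{\sigma,2}$$
and verify both $\alpha_{\sigma}\beta_{\sigma} = \mathrm{id}_{E_{1}\times E_{2}}$ and $\beta_{\sigma}\alpha_{\sigma} = \mathrm{id}_{E_{\sigma,1}\times E_{\sigma,2}}$ by direct $2\times 2$ matrix multiplication of isogenies. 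Once this is established, $\alpha_{\sigma}$ is an isomorphism of abelian surfaces, so the images $\alpha_{\sigma}(E_{\sigma,1}\times\{0\})$ and $\alpha_{\sigma}(\{0\}\times E_{\sigma,2})$ are two elliptic curves in $A$ whose sum fills $A$ and whose intersection is trivial, and they therefore form a decomposition in the sense of Definition \ref{def: basic}.

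The verification rests on three ingredients. First, the degree identities $\widehat{\phi_{\sigma,i}^{+}}\phi_{\sigma,i}^{+} = \phi_{\sigma,i}^{+}\widehat{\phi_{\sigma,i}^{+}} = [r_{\sigma}]$ and the analogous ones with $[s_{\sigma}]$ for $\phi_{\sigma,i}^{-}$. Second, the Bezout relation $a_{\sigma}r_{\sigma} + b_{\sigma}s_{\sigma} = 1$. Third, Lemma \ref{relation of isogenies} and its dual version
$$\phi_{\sigma,1}^{+}\widehat{\phi_{\sigma,2}^{+}} \; = \; \widehat{\phi_{\sigma,1}^{-}}\phi_{\sigma,2}^{-} \quad : \quad E_{\sigma,2} \longrightarrow E_{\sigma,1},$$
obtained by applying $\widehat{\cdot}$ to both sides of Lemma \ref{relation of isogenies} and using $\widehat{fg}=\widehat{g}\widehat{f}$ together with double-dual reflexivity.

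With these tools each diagonal entry of $\alpha_{\sigma}\beta_{\sigma}$ and $\beta_{\sigma}\alpha_{\sigma}$ collapses to $a_{\sigma}[r_{\sigma}] + b_{\sigma}[s_{\sigma}] = [1]$ after applying the degree identities. The off-diagonal entries of $\beta_{\sigma}\alpha_{\sigma}$ contain a factor $\phi_{\sigma,1}^{+}\widehat{\phi_{\sigma,2}^{+}} - \widehat{\phi_{\sigma,1}^{-}}\phi_{\sigma,2}^{-}$, which vanishes by the dual Lemma, and the off-diagonal entries of $\alpha_{\sigma}\beta_{\sigma}$ vanish after noting $\widehat{\phi_{\sigma,1}^{+}}\widehat{\phi_{\sigma,1}^{-}} = \widehat{\phi} = \widehat{\phi_{\sigma,2}^{+}}\widehat{\phi_{\sigma,2}^{-}}$ and invoking Lemma \ref{relation of isogenies} directly. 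The only nontrivial step is guessing the correct $\beta_{\sigma}$; once the formula is in hand the check is mechanical, so I do not expect a real obstacle.
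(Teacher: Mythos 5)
Your proposal is correct and is essentially the paper's own proof: the paper exhibits exactly the same inverse $\beta_{\sigma}$ and checks $\beta_{\sigma}\circ\alpha_{\sigma}={\rm id}$ and $\alpha_{\sigma}\circ\beta_{\sigma}={\rm id}$ via Lemma \ref{relation of isogenies}. Your spelling out of the dualized form of that lemma and of the degree/Bezout identities only makes explicit what the paper leaves to the reader.
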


\begin{proof}
We put 
\begin{equation*}
 \beta _{\sigma} :=  \begin{pmatrix}  a_{\sigma}\phi _{\sigma , 1}^{+}   &   -b_{\sigma}\widehat{\phi _{\sigma , 1}^{-}}    \\
                                                              \phi _{\sigma , 2}^{+}                     &  \widehat{\phi _{\sigma , 2}^{-}}                      \end{pmatrix} 
								:  E_{1} \times  E_{2}  
								  \longrightarrow 
								E_{\sigma , 1} \times  E_{\sigma , 2} . 								    
\end{equation*} 
With the aid of Lemma \ref{relation of isogenies} we can show that 
$\beta _{\sigma} \circ \alpha _{\sigma} = {\rm id\/}$ 
and 
$\alpha _{\sigma} \circ \beta _{\sigma} = {\rm id\/}$.   
\end{proof}

\begin{theorem}\label{rep of Dec}
Let $A=E_{1}\times E_{2}$ be a decomposable Abelian surface with $\rho (A) = 3$.   
Then the decompositions  
$\{ (  \alpha _{\sigma} (E_{\sigma , 1}),   \alpha _{\sigma}(E_{\sigma , 2})  )  \} _{\sigma \in \Sigma}$  
of $A$ defined by  $(\ref{def of new elliptic curve})$ and  $(\ref{hom giving decomposition})$  
represent ${\rm Dec\/}(A)$ completely.  
\end{theorem}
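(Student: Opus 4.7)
The plan is to combine the bijection of Proposition \ref{bijective} with the list of representatives supplied by Lemma \ref{reps of Emb set}. By Proposition \ref{bijective}, sending a decomposition to its associated embedding of $U$ induces a bijection ${\rm Dec\/}(A) \simeq \Gamma _{A}\backslash {\rm Emb\/}(U, NS_{A})$, and by Lemma \ref{reps of Emb set} the embeddings $\{\varphi _{\sigma}\}_{\sigma \in \Sigma}$ exhaust the latter quotient. Since Proposition \ref{decomp number for Pic number 3} gives $|\Sigma| = 2^{\tau (N)-1} = \delta (A)$, it suffices to show that for each $\sigma \in \Sigma$ the embedding of $U$ associated with the decomposition $(\alpha _{\sigma}(E_{\sigma ,1}),\alpha _{\sigma}(E_{\sigma ,2}))$ lies in the $\Gamma _{A}$-orbit of $\varphi _{\sigma}$; the $|\Sigma|$ decompositions will then automatically exhaust ${\rm Dec\/}(A)$ without repetition.

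To verify the orbit equality, I would compute $[\alpha _{\sigma}(E_{\sigma ,i})] \in NS_{A} = \Z e \oplus \Z f \oplus \Z l$ by reading off intersection numbers with $[E_{1}]=e$ and $[E_{2}]=f$ from the columns of the matrix in $(\ref{hom giving decomposition})$. The map $E_{\sigma ,1} \hookrightarrow A$ has components $\widehat{\phi _{\sigma ,1}^{+}}$ and $-\phi _{\sigma ,1}^{-}$, so its projections to $E_{1}$ and $E_{2}$ have degrees $r_{\sigma}$ and $s_{\sigma}$ respectively; hence $[\alpha _{\sigma}(E_{\sigma ,1})] = r_{\sigma}e + s_{\sigma}f + \gamma l$ with $\gamma \in \Z$, and the isotropy condition $2r_{\sigma}s_{\sigma} - 2N\gamma ^{2} = 0$ forces $\gamma = \pm 1$. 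Similarly $[\alpha _{\sigma}(E_{\sigma ,2})] = b_{\sigma}^{2}s_{\sigma}e + a_{\sigma}^{2}r_{\sigma}f + \gamma ' l$, and combining the intersection $([\alpha _{\sigma}(E_{\sigma ,1})],[\alpha _{\sigma}(E_{\sigma ,2})]) = 1$ (which holds because $\alpha _{\sigma}$ is an isomorphism) with the Bezout relation $a_{\sigma}r_{\sigma} + b_{\sigma}s_{\sigma} = 1$ gives $\gamma \gamma ' = -a_{\sigma}b_{\sigma}$. Thus either $(\gamma ,\gamma ')=(1,-a_{\sigma}b_{\sigma})$, in which case the embedding equals $\varphi _{\sigma}$ on the nose, or $(\gamma ,\gamma ')=(-1,a_{\sigma}b_{\sigma})$.

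The latter case is absorbed into a $\Gamma _{A}$-action as follows. Let $\eta $ be the isometry of $NS_{A}$ acting as the identity on $\Z e \oplus \Z f$ and as $-{\rm id\/}$ on $\Z l$; its induced action on $D_{NS_{A}} \simeq \Z/2N\Z$ is $-{\rm id\/}$. Since ${\rm rk\/}(T_{A})=3$ is odd, $O_{Hodge}(T_{A})=\{\pm {\rm id\/}\}$ as recalled in the proof of Proposition \ref{decomp number for Pic number 3}, so $-{\rm id\/} \in \lambda \circ r_{T}(O_{Hodge}(T_{A}))$ and therefore $\eta \in \Gamma _{A}$; applying $\eta $ converts the second case into the first. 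I expect the main obstacle to be the sign bookkeeping that pins down the coefficient of $l$ in $[\alpha _{\sigma}(E_{\sigma ,2})]$ as precisely $-a_{\sigma}b_{\sigma}$ via the Bezout identity; everything else is essentially formal once Proposition \ref{bijective} and Lemma \ref{reps of Emb set} are in hand.
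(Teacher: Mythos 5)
Your proposal is correct and follows essentially the same route as the paper: compute the classes $[\alpha_{\sigma}(E_{\sigma,i})]$ from the degrees of the projections to $E_{1}, E_{2}$, observe that the only ambiguity is the sign of the $l$-component, absorb that sign via the isometry ${\rm id}_{\langle e,f\rangle}\oplus(-{\rm id}_{\langle l\rangle})\in\Gamma_{A}$, and conclude with Proposition \ref{bijective} and Lemma \ref{reps of Emb set}. Your only (harmless) variation is pinning down the $l$-coefficient of $[\alpha_{\sigma}(E_{\sigma,2})]$ via the intersection number $1$ and the Bezout relation, and spelling out why the sign-flip isometry lies in $\Gamma_{A}$, which the paper leaves implicit.
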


\begin{proof}
We  may  assume that $N >1$.   
Let  
$C_{\sigma , i}  := \alpha _{\sigma} (E_{\sigma , i}) \subset A$.   
By calculating the degrees of the projections 
$C_{\sigma , i} \to E_{j}$, 
we see that 
\begin{equation*} 
([C_{\sigma , 1}], [C_{\sigma , 2}]) = 
(e_{\sigma}, f_{\sigma}) 
\: \:  \: \text{or} \: \: \: 
(e_{\sigma}-2l, f_{\sigma}+2a_{\sigma}b_{\sigma}l),   
\end{equation*}
where 
$e_{\sigma},  f_{\sigma}$ 
are the vectors defined by  $(\ref{new emb 1})$ and $(\ref{new emb 2})$.    
If  
$([C_{\sigma , 1}], [C_{\sigma , 2}]) =  (e_{\sigma}, f_{\sigma})$,  
the embedding of $U$ associated to the decomposition    
$(C_{\sigma , 1},  C_{\sigma , 2})$   is  $\varphi _{\sigma}$.   
If  
$([C_{\sigma , 1}], [C_{\sigma , 2}]) =  (e_{\sigma}-2l, f_{\sigma}+2a_{\sigma}b_{\sigma}l)$,   
the corresponding embedding is   
\begin{equation*}
\Bigl(  {\rm id\/}_{\langle e, f \rangle }  \oplus  -{\rm id\/}_{\langle l \rangle }      \Bigr)  \circ  \varphi _{\sigma}  \: \: 
\in  \Gamma _{A} \cdot  \varphi _{\sigma} .   
\end{equation*}
Thus our claim follows from Lemma \ref{reps of Emb set}.  
\end{proof}

The construction of the decompositions $(E_{\sigma , 1}, E_{\sigma , 2})$ is related with the geometries of elliptic modular curves.   
Let 
\begin{equation*}\label{Hecke modular group}
\Gamma _{0}(N) := \left\{    \begin{pmatrix} a & b  \\
                                                                                        c & d  \end{pmatrix}   \in SL_{2}({\Z}),  \: \: c \equiv 0 \mod{N} \:   \right\} .   
\end{equation*}			
As is well-known,   
the congruence modular curve $\Gamma _{0}(N) \backslash \mathbb{H}$  
is the moduli space of elliptic curves with cyclic subgroups of order $N$.  																
To a decomposition $(E, F)$ of $A$  
we associate a point in  $\Gamma _{0}(N) \backslash \mathbb{H}$ 
by considering the the pair $(E, {\rm Ker\/}(\phi))$,  
where $\phi  : E \to F$ is an isogeny of the minimal degree $N$.   
Let us denote this point by 
\begin{equation*}\label{period of decomposition}
\omega (E, F) \; \in \Gamma _{0}(N) \backslash \mathbb{H}.  
\end{equation*}
When $N>1$,  
the Abelian group $G = ({\Z}/2{\Z})^{\tau (N)}$ 
acts on the curve $\Gamma _{0}(N) \backslash \mathbb{H}$ 
by the Atkin-Lehner involutions (\cite{L-N}, see also \cite{Kl}).  
A comparison of the definition of $(E_{\sigma , 1}, E_{\sigma , 2})$ 
and that of Atkin-Lehner involutions yields the following.  

\begin{prop}\label{geometric meaning of A-L involution} 
Let $A=E_{1}\times E_{2}$ be a decomposable Abelian surface with $\rho (A)=3$.   
Assume that $E_{1} \not\simeq E_{2}$.  
Then we have 
\begin{equation*}\label{Atkin-Lehner involution}
G\cdot \omega (E_{1}, E_{2}) 
= \{  \: \omega (E_{\sigma , 1}, E_{\sigma , 2}),  \; \omega (E_{\sigma , 2}, E_{\sigma , 1})  \:  \} _{\sigma \in \Sigma }.     
\end{equation*} 
\end{prop}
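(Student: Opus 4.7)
The plan is to prove the proposition by showing directly that for each $\sigma\in\Sigma$ the Atkin-Lehner involution $w_{r_\sigma}$ carries $\omega(E_1,E_2)$ to $\omega(E_{\sigma,1},E_{\sigma,2})$, and $w_{s_\sigma}$ carries it to $\omega(E_{\sigma,2},E_{\sigma,1})$. Recall the moduli-theoretic description of $w_Q$ for $Q\mid N$ with $\gcd(Q,N/Q)=1$: given a pair $(E,C)$ with $C\subset E$ cyclic of order $N$, write $C=C_Q\oplus C_{N/Q}$ and put
$$w_Q(E,C)\;=\;\bigl(\,E/C_Q,\;(E[Q]+C_{N/Q})/C_Q\,\bigr);$$
the second factor is cyclic of order $N$ because $E[Q]/C_Q$ is cyclic of order $Q$ (Cartier-dual to $C_Q$) and is disjoint from the image of $C_{N/Q}$ for order reasons. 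The $2^{\tau(N)}$ involutions $\{w_Q\}$ generate $G\simeq(\Z/2\Z)^{\tau(N)}$.

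The core of the argument is to compute $\omega(E_{\sigma,1},E_{\sigma,2})$. By Proposition~\ref{geometric meaning of N} the minimal degree of an isogeny $E_{\sigma,1}\to E_{\sigma,2}$ is $N$, and Lemma~\ref{relation of isogenies} exhibits such an isogeny explicitly as
$$\psi_{\sigma}\;:=\;\phi_{\sigma,2}^{+}\circ\widehat{\phi_{\sigma,1}^{+}}\;\colon\;E_{\sigma,1}\longrightarrow E_{\sigma,2}.$$
Using the identity $\phi_{\sigma,1}^{+}\circ\widehat{\phi_{\sigma,1}^{+}}=[r_\sigma]_{E_{\sigma,1}}$, one identifies $\ker\widehat{\phi_{\sigma,1}^{+}}$ with $E_{1}[r_\sigma]/G_{\sigma,1}\subset E_{\sigma,1}$. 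The preimage under $\widehat{\phi_{\sigma,1}^{+}}$ of $G_{\sigma,2}=\ker\phi_{\sigma,2}^{+}$ is then a subgroup of order $r_\sigma s_\sigma=N$ in $E_{\sigma,1}$; since $\phi_{\sigma,1}^{+}$ is injective on $G_{\sigma,2}$ (because $G_{\sigma,1}\cap G_{\sigma,2}=0$) and since $r_\sigma,s_\sigma$ are coprime, this preimage splits as a direct sum of cyclic subgroups of orders $r_\sigma$ and $s_\sigma$, hence is itself cyclic of order $N$ and equals
$$\ker\psi_{\sigma}\;=\;(E_{1}[r_\sigma]+G_{\sigma,2})/G_{\sigma,1}.$$
Comparing with the formula for $w_{r_\sigma}(E_{1},G)$ recorded above immediately yields $\omega(E_{\sigma,1},E_{\sigma,2})=w_{r_\sigma}\cdot\omega(E_{1},E_{2})$; interchanging the roles of $r_\sigma$ and $s_\sigma$ gives $\omega(E_{\sigma,2},E_{\sigma,1})=w_{s_\sigma}\cdot\omega(E_{1},E_{2})$.

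To finish, I observe that as $\sigma$ ranges over $\Sigma$, the unordered pair $\{r_\sigma,s_\sigma\}$ traces out every unordered coprime factorization $N=Qd$, and passing to both orderings $(r_\sigma,s_\sigma)$ and $(s_\sigma,r_\sigma)$ accounts for all $2^{\tau(N)}$ divisors $Q$ of $N$ with $\gcd(Q,N/Q)=1$, hence for all of $G$. This establishes the claimed set equality, with both sides having $2^{\tau(N)}=2\,|\Sigma|$ elements. The hypothesis $E_{1}\not\simeq E_{2}$ enters in guaranteeing $N>1$ and, more importantly, in ensuring that the $2^{\tau(N)}$ points $w_Q\cdot\omega(E_1,E_2)$ are genuinely distinct so that no collapsing takes place. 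The main technical obstacle is purely the bookkeeping in the second paragraph: once $\ker\psi_{\sigma}$ is exhibited explicitly in the form above, the matching with the Atkin-Lehner moduli formula is automatic.
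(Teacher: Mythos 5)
Your proof is correct and is essentially the paper's own argument: the paper gives no written proof beyond the remark that comparing the construction of $(E_{\sigma,1},E_{\sigma,2})$ with the moduli description of the Atkin--Lehner involutions yields the statement, and your computation that $\ker\bigl(\phi_{\sigma,2}^{+}\circ\widehat{\phi_{\sigma,1}^{+}}\bigr)=(E_{1}[r_\sigma]+G_{\sigma,2})/G_{\sigma,1}$ matches $w_{r_\sigma}(E_{1},G)$ is precisely that comparison carried out in detail. Only a minor remark: the set equality does not actually require the $2^{\tau(N)}$ points to be distinct, so the hypothesis $E_{1}\not\simeq E_{2}$ is needed only to guarantee $N>1$, i.e.\ that the Atkin--Lehner action is defined.
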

In other words,  
all members of $\widetilde{{\rm Dec \/}}(A)$ can be constructed from a given $(E_{1}, E_{2})$ 
by the action of the Atkin-Lehner involutions.

\begin{remark}
Before finishing this section, 
let us indicate a way of generalizing Proposition \ref{decomp number for Pic number 3} 
to a counting formula for the set 
\begin{equation*}\label{isom set of elliptic curves}
{\rm El\/}(A) := {\rm Aut\/}(A) \backslash \left\{  \:  E \subset A  \: \: \text{elliptic curve} \: \right\} .  
\end{equation*} 
If we denote  
\begin{equation*}\label{set of isotropic vectors}
I(NS_{A}) := \left\{  \:  {\Z}l \subset NS_{A},  \:     \text{primitive isotropic sublattice of rank}  1  \: \right\} ,  
\end{equation*}
then 
${\rm El\/}(A) $ is naturally identified with the quotient set 
${\rm Aut\/}(A) \backslash I(NS_{A})$.  
Let 
\begin{equation*}\label{symple auto group}
{\rm Aut\/}(A)_{0} = \{ \: f \in {\rm Aut\/}(A),\: \: f^{\ast}|_{T_{A}} = {\rm id\/}_{T_{A}} \: \}
\end{equation*}
be the group of symplectic automorphisms of $A$,  
which is of index $2$ in ${\rm Aut\/}(A)$. 
Firstly we study the set 
${\rm Aut\/}(A)_{0} \backslash I(NS_{A})$.  
The image of ${\rm Aut\/}(A)_{0}$ in $O(NS_{A})$ is equal to the group 
\begin{equation*}
SO(NS_{A})_{0}^{+} := \left\{  \:  \gamma \in SO(NS_{A}) \;   \right| \left.  \;  r_{NS}(\gamma) = {\rm id\/} \in O(D_{NS_{A}}), \;  
\gamma (\mathcal{C}_{A}^{+})  =   \mathcal{C}_{A}^{+}  \:   \right\} .  
\end{equation*}
By the theory of Baily-Borel compactification \cite{B-B},  
the set 
$SO(NS_{A})_{0}^{+} \backslash I(NS_{A})$ 
is canonically identified with 
the set of cusps of the modular variety associated to the orthogonal group $SO(NS_{A})_{0}^{+}$.   
Via the Clifford algebra construction for example, 
the group $SO(NS_{A})_{0}^{+}$ is shown to be isomorphic to 
the congruence modular group $\Gamma _{0}(N)$,     
and the isomorphism of groups induces that of the corresponding modular curves. 
Hence  
the set ${\rm Aut\/}(A)_{0} \backslash I(NS_{A})$ 
is identified with the set of $\Gamma _{0}(N)$-cusps, which is well-known.  
Now the number 
$|{\rm El\/}(A)|$ is calculated by looking 
the action of the group 
${\rm Aut\/}(A) / {\rm Aut\/}(A)_{0}  \simeq {\Z}/2{\Z}$ 
on the set of $\Gamma _{0}(N)$-cusps.     
This involution is calculated to be 
$r \mapsto -r$ for $r \in {\Q}$.  
\end{remark}

\section{The case of Picard number 4}  

\subsection{Counting formulae}

Let $A$ be an Abelian surface with $\rho (A) = 4$.  
As is well-known, 
the isomorphism class of $A$ is uniquely determined by 
the proper equivalence class of the transcendental lattice $T_{A}$ \cite{S-M}.  
Therefore it seems natural to express $\delta (A)$ and $\widetilde{\delta}(A)$ 
in terms of the arithmetic of $T_{A}$.  
Since $T_{A}$ is positive-definite of rank $2$, 
the group $SO(T_{A})$  is described completely as   
\begin{equation*}\label{description of SO}
SO(T_{A})  \simeq   \left\{ \begin{array}{cl} 
                                                 {\Z}/2{\Z},        &      \: \:  \: \: \:           
												                             \text{if} \:  \: T_{A} \not\simeq \begin{pmatrix}  2n &  0  \\
                                                                                                                                                                               0   &  2n \end{pmatrix} ,  \:  
                                           																		                                  \begin{pmatrix}  2n &   n  \\
                                                                                                                                                                                   n    &  2n \end{pmatrix} ,			\\ 
                                                 {\Z}/4{\Z},        &     \: \:  \: \: \:        
												                            \text{if} \: \: T_{A} \simeq \begin{pmatrix} 2n &  0  \\
                                                                                                                                                                     0   & 2n  \end{pmatrix} ,     \\ 
					   {\Z}/6{\Z},         &    \: \:  \: \: \:          
												                            \text{if}  \: \: T_{A} \simeq \begin{pmatrix} 2n &  n  \\
                                                                                                                                                                                                                    n   & 2n  \end{pmatrix} .   
                                                \end{array} \right. 
\end{equation*}

First  we consider the general case : 
$
T_{A} \not\simeq \begin{pmatrix}  2n &  0  \\
                                                                     0   &  2n \end{pmatrix} , 
									 \begin{pmatrix}  2n &   n  \\
                                                                     n    &  2n \end{pmatrix} 	
$.  
The group $SO(T_{A}) $  
consists of $\{  \pm {\rm id\/} \} $ 
and the genus $\mathcal{G}(T_{A})$ does not contain the lattices 
$\begin{pmatrix} 2n &  0  \\
                                    0   & 2n  \end{pmatrix}$,   
$\begin{pmatrix} 2n &  n  \\
                                   n   & 2n  \end{pmatrix}$  
because they are unique in their genera.

\begin{prop}\label{Pic number 4 generic}
Suppose that $\rho (A)=4$ and  
$T_{A} \not\simeq \begin{pmatrix}  2n &  0  \\
                                                                        0   &  2n \end{pmatrix} ,  \:  
                                        \begin{pmatrix}  2n &   n  \\
                                                                         n    &  2n \end{pmatrix} $.  
Then  
\begin{eqnarray*}
&  & 	\delta (A)                         =   \sum_{T\in \mathcal{G}(T_{A})} \left| \: O(D_{T})/O(T) \: \right| .                                      \\  																																											   
&  &   \widetilde{\delta }(A)   =   \frac{1}{2} \cdot | \: \widetilde{\mathcal{G}}(T_{A}) \: | \cdot | \: O(D_{T_{A}}) \: |.    \\                
&  &  \delta _{0}(A) =   \left\{ \begin{array}{cl} 
                                                 \frac{1}{2} \cdot | \: O(D_{T_{A}}) \: |,       \: \: 
           												 &      \text{if}    \:    \begin{pmatrix}  2 &  1  \\
                                                                                                                               1  &  2c \end{pmatrix}        \:  
															      \text{or}    \:    \begin{pmatrix}  2 &  0  \\
                                                                                                                                 0 & 2c \end{pmatrix} 	\in 	\mathcal{G}(T_{A})	,  \\  					  
                                                   0,          \: \:                  &     \text{otherwise} . 
                                                \end{array} \right. 
\end{eqnarray*}
\end{prop}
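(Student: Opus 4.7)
The plan is to specialize the weak formulas of Section~$3$ to the present rank~$2$ setting, using the description of $SO(T_A)$ recalled above. Throughout, $r$ denotes the natural homomorphism $O(M) \to O(D_M)$. For $\delta(A)$, by $(\ref{NS and T for Picard number 4})$ I apply Proposition $\ref{weak formula1}$ with $L = T_A(-1)$. The assignment $M \mapsto M(-1)$ identifies $\mathcal{G}(T_A(-1))$ with $\mathcal{G}(T_A)$ and preserves isometry groups and discriminant forms up to a sign, converting the weak formula into a sum over $T \in \mathcal{G}(T_A)$. Under the hypothesis $SO(T_A) = \{\pm {\rm id}\}$, and by $(\ref{Hodge isometry and SO})$ $O_{\rm Hodge}(T_A) = SO(T_A)$, so its image in $O(D_T)$ is $\{\pm {\rm id}_{D_T}\}$. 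Because $-{\rm id}_T \in O(T)$, this image is contained in $r(O(T))$, and the double coset in the weak formula collapses to the right coset space $O(D_T)/r(O(T))$, yielding the stated formula.

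For $\widetilde{\delta}(A)$, I apply Proposition $\ref{weak formula 2}$. By hypothesis $SO(T_A) = SO(T) = \{\pm {\rm id}\}$ for every $T \in \widetilde{\mathcal{G}}(T_A)$. Since $\pm {\rm id}$ is central in $O(D_{T_A})$, each double coset $\{\pm {\rm id}\}\backslash O(D_{T_A})/\{\pm {\rm id}\}$ coincides with a single right coset $\{\pm {\rm id}\}\cdot g$, of cardinality~$2$ provided $-{\rm id}$ acts non-trivially on $D_{T_A}$. A short classification of rank~$2$ positive-definite even lattices with $2$-elementary discriminant form shows that the only such lattice is $\begin{pmatrix} 2 & 0 \\ 0 & 2 \end{pmatrix}$, which is excluded by hypothesis and, being unique in its genus, does not lie in $\mathcal{G}(T_A)$ either. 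Summing over $\widetilde{\mathcal{G}}(T_A)$ gives the stated formula.

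For $\delta_0(A)$, I use $(\ref{relation of two decomp numbers})$ to write $\delta_0 = 2\delta - \widetilde{\delta}$ and substitute the preceding expressions. Partitioning $\mathcal{G}(T_A) = \mathcal{G}_1 \sqcup \mathcal{G}_2$ from $(\ref{ambiguous class})$ and using $|\widetilde{\mathcal{G}}(T_A)| = |\mathcal{G}_1| + 2|\mathcal{G}_2|$ from $(\ref{genus and proper genus})$, I note that $r(O(T)) = \{\pm {\rm id}\}$ for $T \in \mathcal{G}_2$, while for $T \in \mathcal{G}_1$ with orientation-reversing involution $\tau$, the set $r(O(T)) = \{\pm {\rm id}, \pm r(\tau)\}$ has order $2$ or $4$ according as $r(\tau) \in \{\pm {\rm id}\}$ or not. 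Setting $q := |\{T \in \mathcal{G}_1 : r(\tau) \in \{\pm {\rm id}\}\}|$, a bookkeeping of the individual contributions to $|O(D_T)/r(O(T))|$ along this partition and substitution into $2\delta - \widetilde{\delta}$ shows that all terms cancel except for those indexed by $q$, leaving $\delta_0(A) = \tfrac12 |O(D_{T_A})| \cdot q$.

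The principal remaining task, and the main obstacle of the proof, is the identification of $q$. Reducing an ambiguous rank~$2$ even lattice to either $\operatorname{diag}(2a, 2c)$ or $\begin{pmatrix} 2a & a \\ a & 2c \end{pmatrix}$ and taking $\tau$ to be $\operatorname{diag}(1, -1)$ or $\begin{pmatrix} 1 & 1 \\ 0 & -1 \end{pmatrix}$ respectively, a direct computation of the induced action on $D_T$, using the inverse Gram matrix to describe $T^\vee/T$, shows that $r(\tau) \in \{\pm {\rm id}\}$ if and only if $a = 1$. This isolates the two families $\begin{pmatrix} 2 & 0 \\ 0 & 2c \end{pmatrix}$ and $\begin{pmatrix} 2 & 1 \\ 1 & 2c \end{pmatrix}$; their determinants $4c$ and $4c - 1$ have opposite parity and each determines $c$, so $\mathcal{G}(T_A)$ contains at most one such lattice, giving $q \in \{0, 1\}$ and the stated dichotomy.
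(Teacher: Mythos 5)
Your route is the same as the paper's: specialize Propositions \ref{weak formula1} and \ref{weak formula 2} using $O_{Hodge}(T_{A})=SO(T_{A})=\{\pm{\rm id}\}$ (and the fact, noted before the proposition, that the exceptional lattices are unique in their genera, so $SO(T)=\{\pm{\rm id}\}$ for every $T$ in the genus), collapse the double cosets using centrality of $\pm{\rm id}$, and then compute $\delta_{0}=2\delta-\widetilde{\delta}$ by splitting $\mathcal{G}(T_{A})=\mathcal{G}_{1}\sqcup\mathcal{G}_{2}$, reducing everything to the count $q$ of classes $T$ with $O(T)\neq SO(T)$ and $r_{T}(O(T))=\{\pm{\rm id}\}$. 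Your verification that $-{\rm id}$ acts nontrivially on $D_{T_{A}}$ (only $\begin{pmatrix}2&0\\0&2\end{pmatrix}$ has $2$-elementary discriminant form) is a correct substitute for the paper's remark that $|D_{T_{A}}|>4$, and the bookkeeping identifying $\delta_{0}(A)=\tfrac12|O(D_{T_{A}})|\cdot q$ agrees with the paper's intermediate formula with $q=|\mathcal{G}_{0}(T_{A})|$.

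The one genuine flaw is in the ambiguous-form step. The direct computation does not give ``$r(\tau)\in\{\pm{\rm id}\}$ if and only if $a=1$'': for $T=\begin{pmatrix}2a&0\\0&2c\end{pmatrix}$ one finds $r(\tau)\in\{\pm{\rm id}\}$ iff $a=1$ or $c=1$, and for $T=\begin{pmatrix}2a&a\\a&2c\end{pmatrix}$ iff $a=1$ or $4c-a=1$. You cannot discard the second branch by normalizing $a\le c$: the classification of ambiguous classes only guarantees \emph{some} representative of shape $(a,0,c)$ or $(a,a,c)$, not a reduced one. For instance the class of $5x^{2}+2xy+5y^{2}$ (determinant $96$) is ambiguous and contains $(8,8,5)$, but contains no form $(a,a,c)$ with $a\le c$ and no diagonal form; so an argument that silently assumes $a\le c$ does not apply to all ambiguous classes. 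The repair is exactly what the paper does: in the extra branches the lattice still lies in one of your two families --- if $c=1$ swap the diagonal entries, and if $4c-a=1$ the basis $e_{1}-2e_{2},\,e_{2}$ exhibits $T\simeq\begin{pmatrix}2&1\\1&2c\end{pmatrix}$ --- after which your determinant argument ($4c$ versus $4c-1$, each determining $c$) correctly gives $q\in\{0,1\}$ and the stated dichotomy.
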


\begin{proof} 
The first two equalities are deduced immediately from Propositions \ref{weak formula1} and \ref{weak formula 2}.  
Note that ${\rm id\/} \not=  - {\rm id\/}$ in $O(D_{T_{A}})$ 
because $| D_{T_{A}} | >4$.  
For the third equality, we have 
\begin{eqnarray*}
\delta _{0}(A)  & = &  2\delta (A) - \widetilde{\delta }(A)                        \\  
                              & = &  \sum_{T\in \mathcal{G}_{1}(T_{A})} 
							                 \Bigl\{  \; 2 \cdot |  O(D_{T})/O(T)  | \: -  \: |  O(D_{T})/ \{ \pm {\rm id\/} \}  |  \; \Bigr\}    \\  
                              & = &   | \: \mathcal{G}_{0}(T_{A}) \: | \cdot  
                                              | \: O(D_{T_{A}}) / \{ \pm {\rm id\/} \} \: | , 
\end{eqnarray*} 
where 
$\mathcal{G}_{0}(T_{A}) := 
\{ \: T \in \mathcal{G}_{1}(T_{A}) \: | \: r_{T}(O(T)) = \{ \pm {\rm id\/} \}  \: \}$.  
Let 
$T \in \mathcal{G}_{0}(T_{A})$.  
Since 
$SO(T) \not= O(T)$,  
it follows from the classification of ambiguous binary forms (\cite{Ca}, Chapter 14.4) that 
$T$ is isometric to one of the following lattices : 
\begin{equation*}\label{ambiguous form}  
           \begin{pmatrix}  2a &  0  \\
                                             0  &  2c  \end{pmatrix}  ,       \:  
															     \:    \begin{pmatrix}  2a & a  \\
                                                                                                           a  & 2c  \end{pmatrix}  , \: \: \:  a, c \in {\Z}_{>0}. 
\end{equation*}
If $T = \begin{pmatrix}  2a &  0  \\
                                                       0  &  2c  \end{pmatrix}$,  
$T$ has the orientation-reversing isometry 
$\gamma = \begin{pmatrix}   1  &  0    \\
                                                          0  &  -1  \end{pmatrix}$.   
By the requirement that $r_{T}(\gamma ) \in \{ \pm {\rm id\/} \} $, 
either $a$ or $c$ must be  equal to $1$.  
When $T = \begin{pmatrix}  2a &  a  \\
                                                       a  &  2c  \end{pmatrix}$,  
$T$ admits the orientation-reversing isometry 
$\sigma = \begin{pmatrix}      -1  &  -1    \\
                                                           0   &   1  \end{pmatrix}$.   
Similarly $a$ or $4c-a$ must be equal to $1$.  
In both cases $T$ is isometric to  $\begin{pmatrix}  2 & 1  \\
                                                                                                    1  & 2c  \end{pmatrix}$.     
In conclusion,  
$\mathcal{G}_{0}(T_{A})$ is either empty or 
consists of only one class of the type 
           $\begin{pmatrix}  2  &  0  \\
                                               0  &  2c  \end{pmatrix} $  or  
															                                          $\begin{pmatrix}  2 & 1  \\
                                                                                                                                         1  & 2c  \end{pmatrix}$, 
$c>0$. 																																		 
\end{proof}

Next  we study the  case  
$T_{A} \simeq \begin{pmatrix}  2n &  0  \\
                                                                0   &  2n \end{pmatrix}$. 
The group $SO(T_{A}) $  
is the cyclic group ${\Z}/4{\Z}$ generated by the rotation 
$\gamma _{1}  = \begin{pmatrix}    0  &  -1    \\
                                                              1  &  0  \end{pmatrix}$,    
and 																	 
the group $O(T_{A}) $  
is the dihedral group of order $8$ generated by  
$\gamma _{1} $ and  the reflection 
$\gamma _{2}  = \begin{pmatrix}    1  &  0    \\
                                                                      0  &  -1  \end{pmatrix}$  
with the relations 
$\gamma _{1}^{4} = \gamma _{2}^{2} = (\gamma _{1}\gamma _{2})^{2} =1$.  
The genus 
$\mathcal{G}(T_{A}) = \widetilde{\mathcal{G}}(T_{A})$  
consists only of $T_{A}$.  
The discriminant form $D_{T_{A}}$  
is the group $({\Z}/2n{\Z})^{2}$ 
endowed with the quadratic form  $\begin{pmatrix}  (2n)^{-1}     &  0                  \\
                                                                                               0                &  (2n)^{-1}  \end{pmatrix}$.  
For brevity, 
the image of $\gamma _{1}$ in $O(D_{T_{A}})$ 
is again denoted by $\gamma _{1}$.                                                                            
Straight calculations yield

\begin{lemma}\label{normality 1} 
Let $T_{A}$ and $\gamma _{1}$ be as above and assume that $n>1$.    
Then the homomorphism $O(T_{A}) \to O(D_{T_{A}})$ is injective.  
For an isometry $\gamma \in O(D_{T_{A}})$ 
we have $\gamma ^{-1}\gamma _{1}\gamma = {\rm det\/}(\gamma) \gamma _{1}$.  
As a result, we have 
\begin{equation*}
| \:  \langle \gamma _{1} \rangle   \cdot \gamma \cdot  \langle \gamma _{1} \rangle  \: |  =   
                                  \left\{ \begin{array}{cl} 
							                      4 ,    \; \;     &   {\rm det\/}(\gamma)     =      \pm 1,			\\ 
                                                  8 ,    \; \;     &   {\rm det\/}(\gamma) \not= \pm 1.              																																										  
                                    \end{array} \right. 
\end{equation*} 
\end{lemma}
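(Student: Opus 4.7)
The plan is to verify the three claims in turn---injectivity of $r_T$, the conjugation identity, and the double-coset count---by direct matrix computation in $M_2(\Z/2n\Z)$. Starting with injectivity, every $\gamma \in O(T_A)$ preserves the form $2n \cdot I$ and so is an integer matrix with $\gamma^{T}\gamma = I$, hence lies in $O(2, \Z)$; its eight elements ($\pm I$, $\pm \gamma_1$, $\pm \gamma_2$, $\pm \gamma_1\gamma_2$) all have entries in $\{-1, 0, 1\}$. Since $2n > 2$ when $n > 1$, the reduction mod $2n$ is injective on these eight matrices, so $r_T$ is injective.

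For the conjugation identity, I would identify $D_{T_A}$ with $(\Z/2n\Z)^2$ carrying the quadratic form $q(x, y) = (x^2 + y^2)/2n \in \Q/2\Z$, and write any $\gamma \in O(D_{T_A})$ as a matrix $\begin{pmatrix} a & b \\ c & d \end{pmatrix} \in M_2(\Z/2n\Z)$. Evaluating $q(\gamma v) = q(v)$ on $v = e_1, e_2, e_1 + e_2$ yields the three isometry conditions
\begin{equation*}
a^2 + c^2 \equiv 1 \pmod{4n}, \quad b^2 + d^2 \equiv 1 \pmod{4n}, \quad ab + cd \equiv 0 \pmod{2n}.
\end{equation*}
Expanding $\gamma_1 \gamma$ and $\det(\gamma)\,\gamma \gamma_1$ as $2 \times 2$ matrices and subtracting, each entry of the difference can be rewritten mod $2n$, using the substitutions $1 - a^2 \equiv c^2$, $1 - b^2 \equiv d^2$, $1 - c^2 \equiv a^2$, $1 - d^2 \equiv b^2$, as a product of one of $a, b, c, d$ with the vanishing expression $ab + cd$. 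Hence $\gamma_1 \gamma \equiv \det(\gamma)\,\gamma \gamma_1 \pmod{2n}$, which is equivalent to the asserted identity.

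For the double-coset count, iterating the conjugation identity gives $\gamma_1^i \gamma = \det(\gamma)^i\,\gamma \gamma_1^i$, so $\langle \gamma_1 \rangle \cdot \gamma \cdot \langle \gamma_1 \rangle$ consists of the elements $\det(\gamma)^i\,\gamma \gamma_1^k$ with $i, k \in \Z/4\Z$. Brahmagupta's identity $(a^2 + c^2)(b^2 + d^2) = (ab + cd)^2 + (ad - bc)^2$ combined with the three isometry relations gives $(\det \gamma)^2 \equiv 1 \pmod{2n}$, so $(\det \gamma)^i$ takes only two values, and the double coset has at most $8$ elements, namely $\gamma \gamma_1^k$ and $\det(\gamma)\,\gamma \gamma_1^k$ for $k \in \Z/4\Z$. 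If $\det(\gamma) = \pm 1$, then $\det(\gamma) \in \langle \gamma_1 \rangle$ (since $\gamma_1^2 = -I$), so the two rows coincide and the double coset has exactly $4$ elements; otherwise the scalar matrix $\det(\gamma) \cdot I$ is not among the powers $\{\pm I, \pm \gamma_1\}$ of $\gamma_1$, so no collision occurs and the double coset has exactly $8$ elements.

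The main obstacle is the matrix identity in the second step: it is elementary but requires finding the right arrangement of terms so that each of the four entries of $\gamma_1 \gamma - \det(\gamma)\,\gamma \gamma_1$ collapses to a scalar multiple of $ab + cd$.
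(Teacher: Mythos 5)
Your proof is correct, and it is exactly the "straight calculation" that the paper itself invokes without writing out: the isometry conditions $a^2+c^2\equiv b^2+d^2\equiv 1$, $ab+cd\equiv 0$ do make each entry of $\gamma_1\gamma-\det(\gamma)\,\gamma\gamma_1$ a multiple of $ab+cd$, and your coset argument with $(\det\gamma)^2\equiv 1\pmod{2n}$ gives the $4/8$ dichotomy. Nothing further is needed.
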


When $n>1$, 
considering the determinant ${\rm det\/}(\gamma ) \in {\Z}/2n{\Z}$  
for $\gamma \in O(D_{T_{A}})$  induces a surjective homomorphism 
\begin{equation}\label{determinant homomorphism}
O(D_{T_{A}}) \twoheadrightarrow ({\Z}/2{\Z})^{\tau (n)}  .   
\end{equation}
Then we have 

\begin{prop}\label{Pic number 4 exceptional 1}
Assume that $\rho (A)=4$ and  
$T_{A} \simeq \begin{pmatrix}  2n &  0  \\
                                                         0   &  2n \end{pmatrix}$.     
Then  
\begin{equation*}
\delta (A) = 
                   \left\{ \begin{array}{cl} 
							              1 ,                                                                                    &   \: \: \:   n=1,			\\ 
                                       (2^{-4}+2^{-\tau (n)-3}) \cdot | \: O(D_{T_{A}}) \: |      ,        &   \:  \: \:  n>1,            																																										  
                                    \end{array} \right. 
\end{equation*}
and  
\begin{equation*}
\widetilde{\delta} (A) = 
                   \left\{ \begin{array}{cl} 
							                 1,                                   &   \: \: \:   n=1,			\\ 
                                           2\delta (A),                &   \:  \: \:  n>1.            																																										  
                                    \end{array} \right. 
\end{equation*}
\end{prop}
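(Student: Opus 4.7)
The plan is to specialize the weak formulas of Propositions~\ref{weak formula1} and~\ref{weak formula 2} to this case and evaluate the resulting double-coset counts using Lemma~\ref{normality 1} together with the homomorphism~(\ref{determinant homomorphism}). Since $T_A$ is unique in its genus and $O(T_A)\neq SO(T_A)$, the sets $\mathcal{G}(T_A)$ and $\widetilde{\mathcal{G}}(T_A)$ both reduce to $\{T_A\}$, and combining this with $NS_A\simeq U\oplus T_A(-1)$ from~(\ref{NS and T for Picard number 4}) and $O_{Hodge}(T_A)=SO(T_A)$ from~(\ref{Hodge isometry and SO}), the two weak formulas collapse to
\begin{equation*}
\delta(A)=\bigl|\,\overline{SO(T_A)}\backslash O(D_{T_A})/\overline{O(T_A)}\,\bigr|,\quad \widetilde{\delta}(A)=\bigl|\,\overline{SO(T_A)}\backslash O(D_{T_A})/\overline{SO(T_A)}\,\bigr|,
\end{equation*}
where the bars denote images under $O(T_A)\to O(D_{T_A})$.

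The case $n=1$ is handled directly, since Lemma~\ref{normality 1} then does not apply. Here $D_{T_A}\simeq(\Z/2\Z)^{\oplus 2}$ carries three nonzero elements with $q$-values $1/2,1/2,1$, so $|O(D_{T_A})|=2$; the image $\overline{\gamma_1}$ permutes the two elements of $q$-value $1/2$, so $\overline{SO(T_A)}=\overline{O(T_A)}=O(D_{T_A})$ and therefore $\delta(A)=\widetilde{\delta}(A)=1$.

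Assume $n>1$ from now on. By Lemma~\ref{normality 1} the map $O(T_A)\hookrightarrow O(D_{T_A})$ is injective, so $H_1:=\overline{SO(T_A)}$ and $H_2:=\overline{O(T_A)}$ have orders $4$ and $8$. The same lemma gives $\gamma^{-1}\gamma_1\gamma=\det(\gamma)\gamma_1$, and comparing orders shows $\det(\gamma)^2=1$ in $(\Z/2n\Z)^*$. Plugging these into $|H_1\gamma H_i|=|H_1||H_i|/|\gamma^{-1}H_1\gamma\cap H_i|$ and using the inclusion $H_1\subset H_2$ one obtains
\begin{equation*}
|H_1\gamma H_2|=\begin{cases} 8, & \det(\gamma)=\pm 1,\\ 16, & \det(\gamma)\neq\pm 1,\end{cases}\quad |H_1\gamma H_1|=\begin{cases} 4, & \det(\gamma)=\pm 1,\\ 8, & \det(\gamma)\neq\pm 1,\end{cases}
\end{equation*}
the second column being exactly Lemma~\ref{normality 1}.

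To finish, note that the surjection~(\ref{determinant homomorphism}) has kernel $\{\det=1\}$ of index $2^{\tau(n)}$ in $O(D_{T_A})$, and $-I$ maps to the all-ones element of $(\Z/2\Z)^{\tau(n)}$, so $\{\det=\pm 1\}$ is the union of two kernel cosets and has total cardinality $N_1=2|O(D_{T_A})|/2^{\tau(n)}$. Summing the orbit-size contributions gives
\begin{equation*}
\delta(A)=\frac{N_1}{8}+\frac{|O(D_{T_A})|-N_1}{16}=\frac{N_1+|O(D_{T_A})|}{16}=\bigl(2^{-4}+2^{-\tau(n)-3}\bigr)|O(D_{T_A})|,
\end{equation*}
and the analogous sum with the $H_1$-double-coset sizes yields $\widetilde{\delta}(A)=(N_1+|O(D_{T_A})|)/8=2\delta(A)$. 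The main obstacle is verifying the first column of the double-coset table (extending Lemma~\ref{normality 1} to cosets by $H_2$); once this is in hand, the remaining calculation is purely combinatorial bookkeeping with the kernel of~(\ref{determinant homomorphism}).
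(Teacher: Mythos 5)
Correct, and essentially the paper's own approach: the paper likewise specializes the weak formulas and counts the $H_1$- and $H_2$-double cosets in $O(D_{T_A})$ using Lemma \ref{normality 1} together with the fibres of the determinant homomorphism (\ref{determinant homomorphism}), arriving at exactly the totals you obtain (the paper writes $\widetilde{\delta}(A)=|O(D_{T_A})|\cdot\frac{2^{\tau(n)-1}+1}{2^{\tau(n)-1}}\cdot\frac{1}{8}$ and says $\delta(A)$ is "derived similarly"). The step you flag as the main obstacle does go through — $\gamma^{-1}H_1\gamma=\langle\det(\gamma)\gamma_1\rangle$ meets $H_2$ in $\{\pm\mathrm{id}\}$ unless $\det(\gamma)=\pm1$, giving the sizes $8$ and $16$ — and the only small slip is that the determinant value $-1$ is realized by the reflection $\gamma_2$ rather than by $-\mathrm{id}$ (whose determinant is $+1$), which does not affect the count.
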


\begin{proof}
We may assume that $n>1$.    
It follows from Proposition \ref{weak formula 2}  and Lemma \ref{normality 1} that  
\begin{equation*}
\widetilde{\delta} (A) = 
| \:  SO(T_{A}) \backslash O(D_{T_{A}}) / SO(T_{A})  \: |  =   
|O(D_{T_{A}})|\cdot \frac{2^{\tau(n)-1}+1}{2^{\tau(n)-1}} \cdot \frac{1}{8}                    
\end{equation*} 
The formula for $\delta (A)$ is derived similarly.  
\end{proof}

Finally 
we consider the  case  
$T_{A} \simeq \begin{pmatrix}  2n &  n  \\
                                                        n   &  2n \end{pmatrix}$.  
The group $SO(T_{A}) $  
is the cyclic group ${\Z}/6{\Z}$ generated by the rotation 
$\sigma _{1}  = \begin{pmatrix}    0  &  -1    \\
                                                                   1  &  1     \end{pmatrix}$,    
and 																	 
the group $O(T_{A}) $  
is the dihedral group of order $12$ generated by  
$\sigma _{1} $ and  the reflection  
$\sigma _{2}  = \begin{pmatrix}    -1  &  -1    \\
                                                                     0  &   1  \end{pmatrix}$  
with the relations 
$\sigma _{1}^{6} = \sigma _{2}^{2} = (\sigma _{1}\sigma _{2})^{2} =1$.  
The image of $\sigma _{1}$ in $O(D_{T_{A}})$ 
is again denoted by $\sigma _{1}$.    
The genus 
$\mathcal{G}(T_{A}) = \widetilde{\mathcal{G}}(T_{A})$  
consists only of $T_{A}$.

\begin{lemma}\label{matrix calculation}
If a matrix $\gamma \in GL_{2}({\Z}/n{\Z})$ 
satisfies ${\rm det\/}(\gamma)^{2} =  1$ and 
\begin{equation*}
{}^t\gamma 	\begin{pmatrix}        2    &     1                     \\
                                                               1    &     2       \end{pmatrix}   \gamma  
\equiv 																	   
                                \begin{pmatrix}        2    &     1                     \\
                                                                   1    &     2       \end{pmatrix}    \: \: \: 
{\rm mod\/} \: 
                      \begin{pmatrix}  2n{\Z}    &      n{\Z}              \\
                                                   n{\Z}       &   2n{\Z}        \end{pmatrix}  , 														
\end{equation*} 
then we have 
\begin{equation}\label{adjoint act}
\gamma ^{-1} \begin{pmatrix}   0   &    -1                     \\
                                                               1    &    1        \end{pmatrix}  \gamma  =  
{\rm det\/}(\gamma)  \cdot 
\begin{pmatrix}   \frac{{\rm det\/}(\gamma)-1}{2}    &                                          -1                     \\
                                                               1                                         &     \frac{{\rm det\/}(\gamma)+1}{2}        \end{pmatrix}. 
\end{equation} 
\end{lemma}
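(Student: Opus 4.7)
The plan is a direct computation. Write $\gamma=\begin{pmatrix}a&b\\c&d\end{pmatrix}$ and set $\Delta:=\det(\gamma)=ad-bc$, so $\Delta^{2}\equiv 1\pmod n$ and hence $\Delta^{-1}\equiv\Delta\pmod n$. First I would unpack the congruence
$^t\gamma\bigl(\begin{smallmatrix}2&1\\1&2\end{smallmatrix}\bigr)\gamma\equiv\bigl(\begin{smallmatrix}2&1\\1&2\end{smallmatrix}\bigr)$
into its three coordinate statements. A short expansion gives
\[
a^{2}+ac+c^{2}\equiv 1,\qquad b^{2}+bd+d^{2}\equiv 1,\qquad 2ab+ad+bc+2cd\equiv 1\pmod n.
\]

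Next, using $\gamma^{-1}=\Delta^{-1}\bigl(\begin{smallmatrix}d&-b\\-c&a\end{smallmatrix}\bigr)$, I would compute the product $\gamma^{-1}\bigl(\begin{smallmatrix}0&-1\\1&1\end{smallmatrix}\bigr)\gamma$ entry by entry. The result, before reduction, is
\[
\Delta^{-1}\begin{pmatrix}-(ab+bc+cd)&-(b^{2}+bd+d^{2})\\[2pt]a^{2}+ac+c^{2}&ab+ad+cd\end{pmatrix}.
\]
The off-diagonal entries are immediate: the first two congruences give them as $\Delta^{-1}\cdot(-1)=-\Delta$ and $\Delta^{-1}\cdot 1=\Delta$, matching the off-diagonal entries of the right-hand side of (\ref{adjoint act}) (remember $\det(\gamma)\cdot(-1)=-\Delta$ and $\det(\gamma)\cdot 1=\Delta$).

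For the diagonal entries I would combine $ad-bc=\Delta$ with the cross-term relation (iii) above. Subtracting $ad-bc\equiv\Delta$ from $2ab+ad+bc+2cd\equiv 1$ yields $2(ab+cd)+2bc\equiv 1-\Delta$, and adding it yields $2(ab+cd)+2ad\equiv 1+\Delta$; these rearrange to
\[
ab+ad+cd\equiv \tfrac{1+\Delta}{2},\qquad ab+bc+cd\equiv \tfrac{1-\Delta}{2}\pmod n.
\]
Multiplying by $\Delta^{-1}\equiv\Delta$ and using $\Delta^{2}\equiv 1$ turns these into $\Delta(\Delta+1)/2$ and $-\Delta(\Delta-1)/2$ respectively, which are exactly the two diagonal entries of $\det(\gamma)\cdot\bigl(\begin{smallmatrix}(\det\gamma-1)/2&-1\\1&(\det\gamma+1)/2\end{smallmatrix}\bigr)$.

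The only delicate point is the division by $2$ when $n$ is even, but since $\Delta^{2}\equiv 1\pmod n$ forces $\Delta$ to be odd in that case, the integers $\Delta\pm 1$ are even and the symbols $(\Delta\pm1)/2$ are unambiguous integers modulo $n$; equivalently, the identities are proved in the form $2(ab+ad+cd)\equiv 1+\Delta$ and $2(ab+bc+cd)\equiv 1-\Delta$ and then reinterpreted, which sidesteps any invertibility issue. This is the only bookkeeping subtlety; the rest is straightforward algebra.
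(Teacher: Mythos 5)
Your overall strategy is exactly what the paper intends (its proof of this lemma is literally ``direct calculation''), and your computations are correct as far as they go: the three coordinate congruences extracted from ${}^t\gamma\bigl(\begin{smallmatrix}2&1\\1&2\end{smallmatrix}\bigr)\gamma\equiv\bigl(\begin{smallmatrix}2&1\\1&2\end{smallmatrix}\bigr)$ are right, the entrywise formula for $\gamma^{-1}\bigl(\begin{smallmatrix}0&-1\\1&1\end{smallmatrix}\bigr)\gamma$ is right, and the identification of the off-diagonal entries and the relations $2(ab+ad+cd)\equiv 1+\Delta$, $2(ab+bc+cd)\equiv 1-\Delta \pmod n$ are all correct. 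For odd $n$ this finishes the proof.

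The gap is in the even case, and it is not the harmless bookkeeping you claim. First, the symbols $(\Delta\pm1)/2$ are \emph{not} unambiguous modulo $n$ when $n$ is even: replacing an integer lift $\Delta_{0}$ of $\det(\gamma)$ by $\Delta_{0}+n$ shifts $(\Delta_{0}\pm1)/2$ by $n/2\not\equiv 0$. Correspondingly, your congruences $2X\equiv 1+\Delta$, $2Y\equiv 1-\Delta \pmod n$ determine the diagonal entries $X=ab+ad+cd$ and $Y=ab+bc+cd$ only modulo $n/2$, so ``reinterpreting'' them does not recover the asserted equality in $M_{2}(\Z/n\Z)$. Moreover, both branches genuinely occur, so no argument can close this gap under your reading of the statement: take $n=6$ and $\gamma=\begin{pmatrix}1&3\\0&1\end{pmatrix}$. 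Then ${}^t\gamma\begin{pmatrix}2&1\\1&2\end{pmatrix}\gamma=\begin{pmatrix}2&7\\7&26\end{pmatrix}$, which satisfies the hypothesis (diagonal mod $12$, off-diagonal mod $6$), and $\det(\gamma)=1$; but
\begin{equation*}
\gamma^{-1}\begin{pmatrix}0&-1\\1&1\end{pmatrix}\gamma=\begin{pmatrix}3&-1\\1&4\end{pmatrix}\not\equiv\begin{pmatrix}0&-1\\1&1\end{pmatrix}\pmod 6,
\end{equation*}
i.e.\ the computed conjugate is the other branch, the one corresponding to the lift $\Delta_{0}=7$ of $\det(\gamma)$ (here $X=4=\tfrac{1+\Delta_{0}}{2}\bmod 6$, not $1$). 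So for even $n$ the right-hand side of the lemma must be understood with an appropriate, $\gamma$-dependent choice of lift of $\det(\gamma)$ (equivalently, the identity should be asserted only after multiplication by $2$, or the diagonal entries pinned down by further input); your proof as written establishes the lemma only for odd $n$, and the sentence claiming the even case is a mere reinterpretation is false. Since this is the one place where the statement is delicate (and the even case is used in the paper, e.g.\ in Proposition 5.7), you should either restrict to odd $n$ and treat the even case separately, or state and prove the corrected, lift-dependent version.
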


\begin{proof}
This lemma is proved by direct calculation. 
\end{proof}

The discriminant group $D_{T_{A}}$ contains the subgroup 
$n^{-1}T_{A}/T_{A}$ of index $3$.  
We consider the quadratic form on $n^{-1}T_{A}/T_{A}$ 
induced from the discriminant form.  
The basis of $T_{A}$ induces that of $n^{-1}T_{A}/T_{A}$,  
with respect to which the quadratic form is written as 
$\begin{pmatrix}        2n^{-1}    &    n^{-1}                     \\
                                      n^{-1}    &   2n^{-1}       \end{pmatrix} $.    
As the subgroup $n^{-1}T_{A}/T_{A}$ coincides with    
the subgroup $\{ x \in D_{T_{A}}  |  nx = 0 \} $,  
it is preserved by the action of $O(D_{T_{A}})$  
so that we have a natural homomorphism 
\begin{equation*}
\varphi : O(D_{T_{A}}) \to O(n^{-1}T_{A}/T_{A}) .  
\end{equation*}   
With respect to the basis of $n^{-1}T_{A}/T_{A}$,  
the isometry  $\varphi (\sigma _{1})$ is represented as 
$\begin{pmatrix}        0    &    -1                     \\
                                      1    &    1       \end{pmatrix} $.

\begin{lemma}\label{normality 2}
Let $T_{A}$,  $\sigma _{1}$, and $\varphi $ be as above and assume that $n>1$.      
Then the homomorphism $O(T_{A}) \to O(D_{T_{A}})$ is injective.  
For $\gamma \in O(D_{T_{A}})$  we have  
\begin{equation*}  
| \: \langle \sigma _{1} \rangle \cdot \gamma \cdot \langle \sigma _{1} \rangle \: |  =   
                                  \left\{ \begin{array}{cl}  
				               6 ,    \; \;     &      {\rm det\/}(\varphi (\gamma))     =      \pm 1,			\\  
                                                  18 ,    \; \;     &   {\rm det\/}(\varphi (\gamma)) \not=  \pm 1 .            																      \end{array} \right. 
\end{equation*} 
\end{lemma}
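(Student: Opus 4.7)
The plan is to prove the two assertions of the lemma---injectivity and the double-coset size formula---in turn.

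\emph{Injectivity.} Since $O(T_A) \cong D_{12}$ has six conjugacy classes, it suffices to check that each of the non-identity representatives $\sigma_1, \sigma_1^2, \sigma_1^3 = -{\rm id}, \sigma_2$, and $\sigma_1 \sigma_2$ acts nontrivially on $D_{T_A}$ for $n > 1$. Using the dual basis $e_1^{*}, e_2^{*} \in T_A^{\vee}$ computed from the Gram matrix $\begin{pmatrix} 2n & n \\ n & 2n \end{pmatrix}$, direct calculation gives, e.g., $\sigma_1(e_1^{*}) \equiv e_1^{*} + e_2^{*} \pmod{T_A}$ with $e_2^{*} \not\equiv 0$ in $D_{T_A}$; the remaining representatives are handled similarly, and $-{\rm id}$ is nontrivial because $D_{T_A}$ has order $3n^2 > 4$ and so contains elements of order exceeding $2$.

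\emph{Two possible values of $|H \gamma H|$.} Set $H = \langle \sigma_1 \rangle$, of order $6$ in $O(D_{T_A})$ by the injectivity just established, and use $|H \gamma H| = 36 / |H \cap \gamma H \gamma^{-1}|$. Since $\sigma_1^3 = -{\rm id}$ is central in $O(D_{T_A})$, it lies in $H \cap \gamma H \gamma^{-1}$ for every $\gamma$. In $H \cong \Z/6\Z$, the only subgroups containing $\sigma_1^3$ are $\langle \sigma_1^3 \rangle$ and $H$ itself, so $|H \cap \gamma H \gamma^{-1}| \in \{2, 6\}$ and hence $|H \gamma H| \in \{18, 6\}$. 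The value $6$ occurs precisely when $\gamma$ normalizes $H$, i.e., when $\gamma^{-1}\sigma_1\gamma \in H$; since this conjugate has order $6$, this is equivalent to $\gamma^{-1}\sigma_1\gamma = \sigma_1^{\pm 1}$.

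\emph{Characterization via $d := \det \varphi(\gamma)$.} Apply Lemma \ref{matrix calculation} to $\varphi(\gamma)$ to obtain $\varphi(\gamma^{-1}\sigma_1\gamma) = dM$, whose $(1,2)$-entry equals $-d$. Comparing with the $(1,2)$-entries $\mp 1$ of $\varphi(\sigma_1^{\pm 1})$ yields the forward direction: if $\gamma$ normalizes $H$, then $d \equiv \pm 1 \pmod n$. Conversely, when $d = \pm 1$, direct substitution gives $dM = \varphi(\sigma_1^{\pm 1})$, so $\gamma^{-1}\sigma_1\gamma$ and $\sigma_1^{\pm 1}$ coincide on $n^{-1}T_A/T_A$. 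To lift this to all of $D_{T_A}$, decompose $D_{T_A}$ into $p$-primary components: for primes $p \neq 3$, the $p$-part of $D_{T_A}$ is contained in $n^{-1}T_A/T_A$, so agreement under $\varphi$ is agreement; and for the $3$-primary summand, isomorphic to $\Z/3\Z$ with central isometry group $\{\pm 1\}$ when $3 \nmid n$, the conjugate $\gamma^{-1}\sigma_1\gamma$ automatically matches $\sigma_1$ there by commutativity.

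The main obstacle is this lifting step in the case $3 \mid n$: the $3$-primary component of $D_{T_A}$ is larger, with an isometry group containing more than $\{\pm {\rm id}\}$. This case would be handled by reapplying the matrix-calculation lemma $3$-locally combined with the global order constraint that $\gamma^{-1}\sigma_1\gamma$ has order exactly $6$ in $O(D_{T_A})$, which rules out spurious candidates in the $3$-part. Once the lift is established, the double-coset size takes the value $6$ when $d = \pm 1$ and $18$ otherwise, completing the proof.
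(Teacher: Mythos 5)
Your double-coset dichotomy is sound, and is in fact a little cleaner than the paper's computation: since $\sigma_1^3=-{\rm id}$ is central and nontrivial in $O(D_{T_A})$, the intersection $H\cap\gamma H\gamma^{-1}$ (with $H=\langle\sigma_1\rangle$ of order $6$) has order $2$ or $6$, so $|H\gamma H|\in\{18,6\}$, with $6$ exactly when $\gamma^{-1}\sigma_1\gamma=\sigma_1^{\pm1}$. Your handling of the case $3\nmid n$ (reduction to $n^{-1}T_A/T_A$ together with the $\Z/3\Z$ summand, whose isometry group is $\{\pm1\}$) is essentially the paper's use of the orthogonal splitting $D_{T_A}=(nT_A^{\vee}/T_A)\oplus(n^{-1}T_A/T_A)$, and is correct.

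The gap is exactly where you flag it, and your proposed repair does not close it. First, every application of Lemma \ref{matrix calculation} requires the hypothesis $\det(\varphi(\gamma))^2=1$ in $\Z/n\Z$, which you never verify. It is automatic when $3\nmid n$, because the induced bilinear form on $n^{-1}T_A/T_A$ is nondegenerate (its Gram matrix $\bigl(\begin{smallmatrix}2&1\\1&2\end{smallmatrix}\bigr)$ has determinant $3$, a unit mod $n$); but when $3\mid n$ this form is degenerate and the identity $\det(\varphi(\gamma))^2=1$ needs a separate argument --- the paper states and uses precisely this. Second, for $3\mid n$ your lifting strategy ``same $\varphi$-image plus order exactly $6$ forces equality'' is not valid as stated: if $\varphi$ had a nontrivial kernel containing an element $\kappa$ of order $3$ commuting with $\sigma_1$, then $\sigma_1\kappa$ would have order $6$ and the same $\varphi$-image as $\sigma_1$, so the order constraint by itself rules out nothing. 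What is actually needed, and what the paper proves for $3\mid n$, is that $\varphi:O(D_{T_A})\to O(n^{-1}T_A/T_A)$ is injective; with injectivity, equality of $\varphi$-images immediately gives $\gamma^{-1}\sigma_1\gamma=\sigma_1^{\pm1}$, and your dichotomy then yields both values $6$ and $18$. Until you supply the injectivity of $\varphi$ and $\det(\varphi(\gamma))^2=1$ in the case $3\mid n$, the lemma is not proved. (A minor point: ``$|D_{T_A}|=3n^2>4$, hence an element of order $>2$'' is not a valid inference for general abelian groups; it holds here because $D_{T_A}\simeq\Z/n\Z\oplus\Z/3n\Z$.)
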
 

\begin{proof}
The first assertion is proved immediately. 
We prove the second assertion.  
First consider the case $3|n$. 
The quadratic form on $n^{-1}T_{A}/T_{A} \subset D_{T_{A}}$ is degenerated.  
We can show that 
the natural homomorphism $\varphi $ 
is injective and that 
${\rm det\/}(\varphi(\gamma)) ^{2}=  1 \in {\Z}/n{\Z}$ 
for $\gamma \in O(D_{T_{A}})$.   
By applying Lemma \ref{matrix calculation} 
to $\varphi(\gamma)$ and $\varphi(\sigma _{1})$,    
we see that  
\begin{equation*}
| \: \langle \varphi(\sigma _{1}) \rangle \cdot  \varphi(\gamma )\cdot \langle \varphi(\sigma _{1}) \rangle \: |  =   
                                  \left\{ \begin{array}{cl} 
					     6 ,    \; \;     &      {\rm det\/}(\varphi (\gamma))     =      \pm 1,			\\ 
                                                  18 ,    \; \;     &   {\rm det\/}(\varphi (\gamma)) \not=  \pm 1 .            																																										  
                                    \end{array} \right. 
\end{equation*} 
Next consider the case $3 \nmid n$.  
By the orthogonal decomposition 
\begin{equation*}
D_{T_{A}} \: = \: (  nT_{A}^{\vee}/T_{A} ) \oplus  ( n^{-1}T_{A}/T_{A} )
                    \:  \simeq  \:  {\Z}/3{\Z} \oplus ({\Z}/n{\Z})^{2},  
\end{equation*} 
we have a canonical decomposition 
\begin{equation}\label{eqn: decomp of orthogonal group, exceptional}
O(D_{T_{A}})  \:  =  \:  O( nT_{A}^{\vee}/T_{A} ) \oplus  O( n^{-1}T_{A}/T_{A} ), 
\end{equation} 
so that $\varphi $ is the natural projection  with the kernel 
$O( nT_{A}^{\vee}/T_{A} ) \simeq {\Z}/2{\Z}$.   
When ${\rm det\/}(\varphi (\gamma))     =      \pm 1$,  
it follows from Lemma \ref{matrix calculation} that 
$\langle \sigma _{1} \rangle $ is a normal subgroup of $O(D_{T_{A}})$.  
When ${\rm det\/}(\varphi (\gamma))  \not=   \pm 1$,  
we have again by Lemma \ref{matrix calculation} that  
\begin{equation*}
| \langle \sigma _{1} \rangle \cdot \gamma \cdot \langle \sigma _{1} \rangle | 
=  | \langle \varphi (\sigma _{1}) \rangle \cdot \varphi (\gamma ) \cdot  \langle \varphi (\sigma _{1} ) \rangle | 
= 18.  
\end{equation*} 
\end{proof}

When $n>2$ is odd  (resp. even), 
considering the determinant ${\rm det\/}(\varphi (\gamma)) \in {\Z}/n{\Z}$ for $\gamma \in O(D_{T_{A}})$ 
induces a surjective homomorphism 
\begin{equation*}
O(D_{T_{A}}) \twoheadrightarrow ({\Z}/2{\Z})^{\tau (n)} \; \; \; 
(\text{resp}. \;  O(D_{T_{A}}) \twoheadrightarrow ({\Z}/2{\Z})^{\tau (2^{-1}n)}).    
\end{equation*}
Similarly as Proposition  \ref{Pic number 4 exceptional 1},  
we have

\begin{prop}\label{Pic number 4 exceptional 2}
Assume that $\rho (A)=4$ and  
$T_{A} \simeq \begin{pmatrix}  2n &  n  \\
                                                         n   &  2n \end{pmatrix}$.     
Then 
\begin{equation*}
\delta (A) = 
                   \left\{ \begin{array}{cl} 
							              1 ,                                                                                    &   \: \: \:   n=1,			   \\ 
                                        3^{-2} \cdot (2^{-2}+2^{-\tau(2^{-1}n)}) \cdot | \: O(D_{T_{A}}) \: |    ,           &   \:  \: \:  n : \text{even},    \\            		
                                        3^{-2} \cdot (2^{-2}+2^{-\tau(n)}) \cdot | \: O(D_{T_{A}}) \: |	,                           &   \: \: \:   n:\text{odd} >1                                               
                   \end{array} \right. 
\end{equation*}
and  
\begin{equation*}
\widetilde{\delta} (A) = 
                   \left\{ \begin{array}{cl} 
				      1,                                &   \: \: \:   n=1,			\\ 
                                           2\delta (A),                &   \:  \: \:  n>1.            																																										  
                                    \end{array} \right. 
\end{equation*}
\end{prop}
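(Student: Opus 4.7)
The plan is to follow the template of Proposition \ref{Pic number 4 exceptional 1} with the cyclic group $SO(T_{A}) = \langle \sigma_1 \rangle$ now of order $6$, using Lemma \ref{normality 2} in the role played there by Lemma \ref{normality 1}. Since $T_{A}$ is unique in its genus, Proposition \ref{weak formula 2} collapses to
\[
\widetilde{\delta}(A) = \Bigl| \: \langle \sigma_{1} \rangle \backslash O(D_{T_{A}}) / \langle \sigma_{1} \rangle \: \Bigr|,
\]
and Lemma \ref{normality 2} tells us that each such double coset has cardinality $6$ when $\det(\varphi(\gamma)) = \pm 1$ and cardinality $18$ otherwise, where $\varphi : O(D_{T_{A}}) \to O(n^{-1}T_{A}/T_{A})$ is the projection introduced just before that lemma.

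The counting step reduces to determining $|P|$ where $P := \{\gamma \in O(D_{T_{A}}) : \det(\varphi(\gamma)) = \pm 1\}$. Using the surjective homomorphism $O(D_{T_{A}}) \twoheadrightarrow (\Z/2\Z)^{k}$ singled out just before the statement, with $k = \tau(n)$ for $n$ odd and $k = \tau(2^{-1}n)$ for $n$ even, obtained by composing $\det \circ \varphi$ with the sign map on each local component of $(\Z/n\Z)^{\ast}$, one gets $|P| = 2 \cdot 2^{-k} |O(D_{T_{A}})|$. The parity split reflects the fact that the $2$-primary part of $(\Z/n\Z)^{\ast}$ contributes an extra $\Z/2\Z$ to the quotient by $\{\pm 1\}$ when $n$ is even. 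Hence
\[
\widetilde{\delta}(A) = \frac{|P|}{6} + \frac{|O(D_{T_{A}})| - |P|}{18} = \frac{1 + 2^{2-k}}{18} \cdot |O(D_{T_{A}})|,
\]
which after a direct simplification equals twice the expression claimed for $\delta(A)$ in each parity.

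For $n > 1$ the identity $\widetilde{\delta}(A) = 2\delta(A)$ is, by (\ref{relation of two decomp numbers}), equivalent to $\delta_{0}(A) = 0$. I would verify this by computing the transcendental lattice of a square Abelian surface $E \times E$ via the CM structure of $E$: for an order of conductor $f$ in $\Q(\sqrt{-3})$ the resulting form is $\begin{pmatrix} 2 & f \\ f & 2f^{2} \end{pmatrix}$, which is never equivalent to $\begin{pmatrix} 2n & n \\ n & 2n \end{pmatrix}$ with $n > 1$, so no such $A$ admits a decomposition with $E \simeq E'$. The degenerate case $n = 1$ is verified directly: $|D_{T_{A}}| = 3$, $|O(D_{T_{A}})| = 2$, and $A = E_{\omega} \times E_{\omega}$ admits a unique decomposition up to isomorphism.

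The main obstacle I expect is the careful justification of the surjection onto $(\Z/2\Z)^{k}$ in the even-$n$ case, where $D_{T_{A}}$ has a subtler $2$-adic structure; in the odd case it is a clean consequence of the Chinese Remainder decomposition (\ref{eqn: decomp of orthogonal group, exceptional}) and the explicit form of $\det \circ \varphi$, but in the even case one must analyse the $2$-primary block separately to confirm that the kernel is precisely the preimage of $\{\pm 1\}$ under $\det \circ \varphi$.
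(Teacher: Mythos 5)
Your computation of $\widetilde{\delta}(A)$ is exactly the paper's route: since $O(T_{A})\not=SO(T_{A})$ and $T_{A}$ is alone in its genus, Proposition \ref{weak formula 2} reduces to a single double coset space, Lemma \ref{normality 2} gives the sizes $6$ and $18$, and the determinant surjection stated before the proposition gives $|P|=2^{1-k}|O(D_{T_{A}})|$, whence $\widetilde{\delta}(A)=\frac{1+2^{2-k}}{18}|O(D_{T_{A}})|$, which indeed equals twice the displayed $\delta(A)$. Where you genuinely diverge is the $\delta(A)$ half: the paper obtains $\delta(A)$ by a second double-coset count, namely $\delta(A)=|\,SO(T_{A})\backslash O(D_{T_{A}})/r_{T}(O(T_{A}))\,|$ from Proposition \ref{weak formula1} (with the full dihedral image of $O(T_{A})$ on the right), and the relation $\widetilde{\delta}=2\delta$ then falls out of the two computations; you instead deduce $\delta=\widetilde{\delta}/2$ from $\delta_{0}(A)=0$ via $(\ref{relation of two decomp numbers})$. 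That route is admissible in principle (you rightly avoid quoting Corollary \ref{self-product}, which the paper derives \emph{from} this proposition), but your justification of $\delta_{0}(A)=0$ contains a false assertion: it is not true that $E\times E$ has transcendental lattice $\begin{pmatrix} 2 & f \\ f & 2f^{2}\end{pmatrix}$ for every $E$ with CM by the order $\mathcal{O}_{f}\subset\Q(\sqrt{-3})$. That matrix is equivalent to the principal form of discriminant $-3f^{2}$, while by Shioda--Mitani $T_{E\times E}$ is the Gauss square of the proper ideal class of $E$, which need not be principal (e.g.\ $f=6$, discriminant $-108$, class group $\Z/3\Z$). What is true, and suffices for your argument, is the weaker statement that $T_{E\times E}$ is a \emph{primitive} form of discriminant $-3f^{2}$ (a square of a primitive class is primitive of the same discriminant), and primitivity already excludes $\begin{pmatrix} 2n & n \\ n & 2n\end{pmatrix}$ with $n>1$. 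As written, the step rests on a claim that fails, so it must be repaired in this way (or replaced by the paper's second coset count).

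A smaller boundary issue: the surjection onto $(\Z/2\Z)^{\tau(2^{-1}n)}$ is asserted in the paper only for $n>2$, and your argument that $P$ is the preimage of $\{\pm 1\}$ of index $2^{k-1}$ breaks down at $n=2$, where $-1\equiv 1$ and $\det(\varphi(\gamma))$ is identically $1$; there one has $P=O(D_{T_{A}})$ outright, and the stated formula survives only because of the convention $\tau(1)=1$ (check: $|O(D_{T_{A}})|=12$, $\widetilde{\delta}=2$, $\delta=1$). So the $n=2$ case needs its own one-line verification rather than the generic index argument. Finally, your treatment of $n$ even versus odd correctly mirrors the paper, which likewise asserts the determinant surjection without proof; flagging the $2$-adic analysis as the remaining point to check is fair, but note it is needed precisely to certify that the exponent is $\tau(2^{-1}n)$ and not $\tau(n)$ in the even case.
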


\subsection{Some conclusions}

From Propositions \ref{Pic number 4 generic}, \ref{Pic number 4 exceptional 1}, and \ref{Pic number 4 exceptional 2},  
we have

\begin{corollary}\label{isometric NS}
Let $A$ and $B$ be Abelian surfaces with Picard number $4$.  
If $NS_{A}$ is isometric to $NS_{B}$,  
or equivalently if $T_{A}$ is isogenus to $T_{B}$,  
then $\delta (A) = \delta (B)$ and $\widetilde{\delta}(A) = \widetilde{\delta}(B)$.  
\end{corollary}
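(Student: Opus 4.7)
The plan is to read off the result directly from Propositions \ref{Pic number 4 generic}, \ref{Pic number 4 exceptional 1}, and \ref{Pic number 4 exceptional 2}, observing that every quantity appearing on the right-hand side of those formulae depends only on the isogenus class of $T_{A}$.

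First, I would establish the parenthetical equivalence ``$NS_{A}\simeq NS_{B}$ iff $T_{A}$ is isogenus to $T_{B}$.'' The forward direction follows from the canonical isometry $(D_{NS_{A}},q_{NS_{A}})\simeq(D_{T_{A}},-q_{T_{A}})$ of \eqref{disc form of orthogonal complement} together with the fact that $T_{A},T_{B}$ are both rank $2$ positive-definite, so equality of discriminant forms gives them the same genus by Nikulin's theorem. For the converse, recall from \eqref{NS and T for Picard number 4} that $NS_{A}\simeq U\oplus T_{A}(-1)$ and $NS_{B}\simeq U\oplus T_{B}(-1)$; since $T_{A}(-1)$ and $T_{B}(-1)$ are isogenus, Nikulin--Kneser's uniqueness theorem (applied just as in the proof of Proposition \ref{a trick}) gives $U\oplus T_{A}(-1)\simeq U\oplus T_{B}(-1)$.

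Next I would do the case split according to whether $T_{A}$ is one of the exceptional forms $\begin{pmatrix}2n&0\\0&2n\end{pmatrix}$ or $\begin{pmatrix}2n&n\\n&2n\end{pmatrix}$. The crucial point, already used in the proof of Proposition \ref{Pic number 4 generic}, is that each of these two lattices is the unique element of its own genus. Consequently, if $T_{A}$ is of exceptional type, then $T_{B}$, lying in the same genus, must be isometric to $T_{A}$ (in particular with the same value of $n$), and Propositions \ref{Pic number 4 exceptional 1} and \ref{Pic number 4 exceptional 2} give identical values for $\delta$ and $\widetilde{\delta}$ at $A$ and $B$. The same uniqueness shows that $T_{A}$ is of generic type iff $T_{B}$ is.

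In the generic case one simply reads the formulae
\[
\delta(A)=\sum_{T\in\mathcal{G}(T_{A})}|O(D_{T})/O(T)|,\qquad
\widetilde{\delta}(A)=\tfrac{1}{2}\,|\widetilde{\mathcal{G}}(T_{A})|\cdot|O(D_{T_{A}})|
\]
of Proposition \ref{Pic number 4 generic}: $\mathcal{G}(T_{A})$, $\widetilde{\mathcal{G}}(T_{A})$, and the isomorphism class of $D_{T_{A}}$ (hence of each $D_{T}$ for $T$ in the genus) are by definition invariants of the isogenus class, so equality of the right-hand sides for $A$ and $B$ is automatic. There is essentially no obstacle here; the only thing to check with care is that the dichotomy ``generic vs.\ exceptional'' is a genus invariant, which is precisely the uniqueness statement recalled above.
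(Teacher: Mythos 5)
Your proposal is correct and matches the paper's route: the paper derives this corollary directly from Propositions \ref{Pic number 4 generic}, \ref{Pic number 4 exceptional 1}, and \ref{Pic number 4 exceptional 2}, exactly on the grounds you spell out, namely that every quantity in those formulae (the genus, the proper genus, the discriminant form, and the exceptional-versus-generic dichotomy, the latter because the exceptional lattices are unique in their genera) is an invariant of the isogenus class of $T_{A}$. Your added verification of the equivalence $NS_{A}\simeq NS_{B}$ iff $T_{A}$ isogenus to $T_{B}$ via \eqref{NS and T for Picard number 4} is a detail the paper leaves implicit, but it is the intended argument.
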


\begin{corollary}\label{self-product}
Let $A$ be an Abelian surface with $\rho (A)=4$.  
Then we have $\delta _{0}(A) \not= 0$ 
if and only if 
$T_{A}$ is primitive and 
belongs to a principal genus, i.e.,  
$T_{A}$ is isogenus to either 
$\begin{pmatrix}  2 &  0  \\
                                    0   &  2c \end{pmatrix}$ 
or 
$\begin{pmatrix}  2 &  1  \\
                                    1   &  2c \end{pmatrix}$  
for some $c \in {\Z}_{>0}$.  
\end{corollary}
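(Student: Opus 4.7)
The plan is to extract $\delta_{0}(A)$ from each of the three formulae in Propositions \ref{Pic number 4 generic}, \ref{Pic number 4 exceptional 1}, and \ref{Pic number 4 exceptional 2} via the identity $\delta_{0}(A) = 2\delta(A) - \widetilde{\delta}(A)$ from $(\ref{relation of two decomp numbers})$, and then to check that the combined vanishing criterion coincides with the statement of the corollary.

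First I would handle the generic case. In Proposition \ref{Pic number 4 generic}, the value of $\delta_{0}(A)$ is already displayed, and it is non-zero precisely when $\mathcal{G}(T_{A})$ contains a lattice of the form $\begin{pmatrix} 2 & 0 \\ 0 & 2c \end{pmatrix}$ or $\begin{pmatrix} 2 & 1 \\ 1 & 2c \end{pmatrix}$ with $c \in {\Z}_{>0}$, which is exactly the condition appearing in the corollary.

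Next I would dispose of the exceptional lattices. When $T_{A} \simeq \begin{pmatrix} 2n & 0 \\ 0 & 2n \end{pmatrix}$ or $T_{A} \simeq \begin{pmatrix} 2n & n \\ n & 2n \end{pmatrix}$ with $n > 1$, Propositions \ref{Pic number 4 exceptional 1} and \ref{Pic number 4 exceptional 2} give $\widetilde{\delta}(A) = 2\delta(A)$ and hence $\delta_{0}(A) = 0$; and in that case $T_{A} = L(n)$ with $L = \begin{pmatrix} 2 & 0 \\ 0 & 2 \end{pmatrix}$ or $\begin{pmatrix} 2 & 1 \\ 1 & 2 \end{pmatrix}$, so $T_{A}$ is not primitive, in agreement with the corollary. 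When $n = 1$ in either exceptional case, the same propositions give $\delta(A) = \widetilde{\delta}(A) = 1$, hence $\delta_{0}(A) = 1 \neq 0$, and $T_{A}$ is one of the matrices listed in the corollary with $c = 1$.

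The remaining point is to verify that the criterion coming out of the generic case already forces primitivity of $T_{A}$. This is because both $\begin{pmatrix} 2 & 0 \\ 0 & 2c \end{pmatrix}$ and $\begin{pmatrix} 2 & 1 \\ 1 & 2c \end{pmatrix}$ have matrix entries with greatest common divisor $1$, and this content is a genus invariant, being detected in $T_{A} \otimes {\Z}_{p}$ at each prime $p$. The bulk of the argument is thus routine assembly of the three propositions; the only delicate point, and the main obstacle to watch for, is organizing the exceptional sub-cases $n = 1$ versus $n > 1$ correctly, together with the observation that primitivity follows automatically from the lattice-theoretic condition on $\mathcal{G}(T_{A})$.
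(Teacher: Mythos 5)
Your proposal is correct and takes essentially the same route as the paper, which presents the corollary as an immediate consequence of Propositions \ref{Pic number 4 generic}, \ref{Pic number 4 exceptional 1}, and \ref{Pic number 4 exceptional 2} combined through $\delta_{0}(A)=2\delta(A)-\widetilde{\delta}(A)$, with the exceptional lattices handled by $\widetilde{\delta}(A)=2\delta(A)$ for $n>1$ and directly for $n=1$. Your added remark that the genus condition already forces primitivity of $T_{A}$ (since the content, equivalently the norm ideal, is a local and hence genus invariant) is a point the paper leaves implicit, and it is correct.
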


Note that Corollary \ref{self-product} can also be proved by Shioda-Mitani's ideal-theoretic method (\cite{S-M}, Section 4).  
It must be well-known to experts.   
For an odd prime number $p$ and $a \in ({\Z}/p{\Z})^{\times}$, 
let $\chi _{p}(a) = \Bigl( \frac{a}{p} \Bigr) $ be the Legendre symbol.  
For an odd number $n \equiv 1 \mod{4}$, 
we define 
\begin{equation*}
\chi _{2}(n) =  \left\{ \begin{array}{cl} 
				      1,                 &   \: \: \:   n \equiv 1 \mod{8},			\\ 
                                           -1,                &   \:  \: \:  n \equiv 5 \mod{8}.      			
                                \end{array} \right. 
\end{equation*}
By Proposition \ref{multiple-divisible} proved in Section $\ref{section 7}$ independently, 
we have the following.

\begin{corollary}\label{multiple of T_{A}}
Let $A$ be an Abelian surface with $\rho (A)=4$ and 
assume that 
$T_{A} \not\simeq 
\begin{pmatrix}  2n &  0  \\
                               0 &  2n \end{pmatrix} ,  
 \begin{pmatrix}  2n &  n  \\
                                 n &  2n \end{pmatrix}$.  
For a natural number $N>1$   
let $A_{N}$ be the Abelian surface with $T_{A_{N}}$ properly equivalent to the form $T_{A}(N)$.

$(1)$ The number $\widetilde{\delta}(A_{N})$ is divisible by $\widetilde{\delta}(A)$.  

$(2)$ If $N$ is coprime to ${\rm det\/}(T_{A})$,  
then 
\begin{equation}\label{decomp number divisible coprime case} 
\widetilde{\delta}(A_{N}) =  \widetilde{\delta}(A) \cdot 
2^{\tau (N)} \cdot N \cdot \mathop{\prod}_{p|N} \Bigl(  1- \frac{\chi _{p}(-{\rm det\/}(T_{A}))}{p} \Bigr) .  							  
\end{equation}

$(3)$ If $T_{A}$ is primitive and $N | {\rm det\/}(T_{A})^{a}$ for some $a \in {\Z}_{>0}$, 
then 
\begin{equation}\label{decomp number divisible totally non-coprime case}
\widetilde{\delta}(A_{N}) =  \widetilde{\delta}(A) \cdot 2^{\tau (N)} \cdot N.  							  
\end{equation}
\end{corollary}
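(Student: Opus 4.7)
The plan is to compute the ratio $\widetilde{\delta}(A_{N})/\widetilde{\delta}(A)$ by applying Proposition \ref{Pic number 4 generic} to both $A$ and $A_{N}$. First I would check that $T_{A}(N)$ is not one of the two excluded forms, which is immediate: if $T_{A}(N) \simeq \begin{pmatrix} 2m & 0 \\ 0 & 2m \end{pmatrix}$ or $\begin{pmatrix} 2m & m \\ m & 2m \end{pmatrix}$ for some $m$, then necessarily $N \mid m$ and $T_{A}$ is itself of the same shape with parameter $m/N$, contradicting the hypothesis on $T_{A}$. Hence the generic-case formula applies uniformly, yielding
\[
\frac{\widetilde{\delta}(A_{N})}{\widetilde{\delta}(A)} \; = \; \frac{|\widetilde{\mathcal{G}}(T_{A}(N))|}{|\widetilde{\mathcal{G}}(T_{A})|} \cdot \frac{|O(D_{T_{A}(N)})|}{|O(D_{T_{A}})|}.
\]

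The second factor is treated by invoking Proposition \ref{multiple-divisible} of Section \ref{section 7}, which is precisely an arithmetic evaluation of $|O(D_{L(N)})|$ in terms of $|O(D_{L})|$, $N$, $\tau(N)$, and the Legendre symbols $\chi_{p}(-\det L)$ for $p \mid N$. For (2), the explicit coprime formula from that proposition reproduces the Euler-type product $N \prod_{p \mid N} (1 - \chi_{p}(-\det T_{A})/p)$; for (3), since every prime $p \mid N$ also divides $\det(T_{A})$, all these Legendre symbols vanish and the factor collapses to a power of $N$.

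For the proper-genus factor, I would pass to the classical language of positive definite binary quadratic forms. A proper equivalence class in the genus of $T_{A}$ corresponds to a proper ideal class of a suitable order in an imaginary quadratic field modulo the subgroup of principal genera; the set $\widetilde{\mathcal{G}}(T_{A})$ therefore counts the classes in a single genus, which equals $h/2^{\mu-1}$ where $h$ is the proper class number and $\mu$ is the number of genus characters. Rescaling $T_{A} \rightsquigarrow T_{A}(N)$ multiplies the conductor by $N$, so the ratio $h_{N}/h$ is supplied by the classical conductor formula for class numbers of non-maximal orders, while the number of genera increases by an explicit $2$-power—gaining a new factor $2^{\tau(N)}$ when $N$ is coprime to $\det(T_{A})$ (every new prime contributes a new genus character), and gaining nothing in the totally-non-coprime case. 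Assembling these pieces with the discriminant-form factor yields the stated formulas (\ref{decomp number divisible coprime case}) and (\ref{decomp number divisible totally non-coprime case}). Claim (1) is then a formal corollary: both ratios, once expanded, are positive integers by construction.

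The main obstacle, I expect, is the bookkeeping for case (3): when every prime of $N$ already ramifies in $\det(T_{A})$, the conductor analysis of the order $\mathbb{Z} + N\mathcal{O}$ inside $\mathcal{O}$ and the behavior of the genus-character group are both more delicate than in the coprime setting, and one must verify by hand that these two subtleties cancel out to leave the clean factor $2^{\tau(N)} \cdot N$. The primitivity hypothesis on $T_{A}$ is what makes this cancellation possible, since it guarantees that $T_{A}$ corresponds to an actual order (rather than a scaled lattice) and that the classical conductor formula applies without correction.
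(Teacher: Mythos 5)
Your first two steps match the paper's intent: check that $T_{A}(N)$ is again outside the two exceptional families, apply Proposition \ref{Pic number 4 generic} to both $A$ and $A_{N}$, and feed the ratio $|O(D_{T_{A}(N)})|/|O(D_{T_{A}})|$ into Proposition \ref{multiple-divisible}. But your treatment of the proper-genus factor is wrong, and this is exactly the point the statement hinges on. The lattices isogenus to $T_{A}(N)$ are all imprimitive: the scale (the ideal generated by all values $(x,y)$) is a local invariant, so every member of $\mathcal{G}(T_{A}(N))$ is of the form $M(N)$ with $M\in\mathcal{G}(T_{A})$, and $M\mapsto M(N)$ is a bijection on classes and on proper classes. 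Hence $|\widetilde{\mathcal{G}}(T_{A}(N))|=|\widetilde{\mathcal{G}}(T_{A})|$, the genus factor contributes $1$, and the entire ratio $\widetilde{\delta}(A_{N})/\widetilde{\delta}(A)$ is the discriminant-form ratio of Proposition \ref{multiple-divisible}; this also gives part $(1)$ at once. Your proposal instead identifies $\widetilde{\mathcal{G}}(T_{A}(N))$ with a genus of \emph{primitive} forms of discriminant $-N^{2}\det(T_{A})$, i.e.\ with ideal classes of the order of conductor multiplied by $N$, and then invokes the conductor formula for $h$ together with the growth of the number of genera. That conflation is a genuine error: $T_{A}(N)$ is $N$ times a form of discriminant $-\det(T_{A})$, not a primitive form of the larger discriminant, and the class-number growth of the non-maximal order is irrelevant here.

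One can also see the failure numerically: in the coprime case your genus factor would be $\bigl(h_{N}/h\bigr)\cdot 2^{-\tau(N)} = 2^{-\tau(N)}\, N\prod_{p|N}\bigl(1-\chi_{p}(-\det T_{A})/p\bigr)$ (up to the unit-index subtlety at $\det T_{A}=3,4$, which your route would additionally have to handle), and multiplying this by the $|O(D)|$ ratio from Proposition \ref{multiple-divisible} gives $N^{2}\prod_{p|N}\bigl(1-\chi_{p}(-\det T_{A})/p\bigr)^{2}$, not the asserted $2^{\tau(N)}\,N\prod_{p|N}\bigl(1-\chi_{p}(-\det T_{A})/p\bigr)$. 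So the "cancellation" you anticipate in case $(3)$ does not exist in case $(2)$ either; the bookkeeping cannot be repaired within your framework. Replace the whole conductor-formula discussion by the scaling bijection $\mathcal{G}(T_{A})\simeq\mathcal{G}(T_{A}(N))$ (and its oriented version), after which the corollary is exactly Proposition \ref{multiple-divisible} combined with Proposition \ref{Pic number 4 generic}, as the paper indicates. Your preliminary check that $T_{A}(N)$ avoids the forms $\begin{pmatrix} 2m & 0\\ 0 & 2m\end{pmatrix}$ and $\begin{pmatrix} 2m & m\\ m & 2m\end{pmatrix}$ is correct in substance, though the step "necessarily $N\mid m$" deserves a line of justification (compare contents of the Gram matrices, with a small extra argument for the diagonal form when $N$ is even, using that a positive-definite binary unimodular form cannot have both diagonal entries even).
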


We compare our formula with Shioda-Mitani's formula. 

\begin{theorem}[Shioda-Mitani  \cite{S-M}, Theorem 4.7]\label{S-M}
Suppose that $\rho (A)=4$ and that $T_{A}$ is primitive, 
i.e, $T_{A}$ is not isometric to $L(n)$ for any even lattice $L$ and $n > 1$.  
Let $\mathcal{O}$ be the unique order in an imaginary quadratic field 
with discriminant $d(\mathcal{O}) = -{\rm det\/}(T_{A})$, 
and $\mathcal{C}(\mathcal{O})$ be the ideal class group of $\mathcal{O}$.  
Then 
\begin{equation}\label{S-M formula}
\widetilde{\delta}(A) = | \mathcal{C}(\mathcal{O})  |. 
\end{equation}
\end{theorem}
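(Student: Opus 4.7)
The plan is to derive Theorem \ref{S-M} by matching the lattice-theoretic count from Proposition \ref{Pic number 4 generic} against the classical correspondence between binary quadratic forms and ideal classes.

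First I would set up the dictionary. To the rank-$2$ positive-definite even lattice $T_{A} = \begin{pmatrix} 2a & b \\ b & 2c \end{pmatrix}$, endowed with its natural Hodge orientation, I would associate the integral binary quadratic form $Q_{A}(x,y) = ax^{2} + bxy + cy^{2}$, whose discriminant is $b^{2} - 4ac = -{\rm det\/}(T_{A}) = d(\mathcal{O})$. The primitivity of $T_{A}$ as an even lattice is equivalent to $\gcd(a,b,c)=1$, i.e.\ the primitivity of $Q_{A}$ as a form. By the classical Gauss-Dedekind theorem, proper equivalence classes of primitive positive-definite integral binary forms of discriminant $d(\mathcal{O})$ are in bijection with $\mathcal{C}(\mathcal{O})$, and under this bijection the proper genus $\widetilde{\mathcal{G}}(T_{A})$ corresponds to precisely one classical genus of forms. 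Gauss's equidistribution theorem then gives
\begin{equation*}
|\mathcal{C}(\mathcal{O})| = g(d) \cdot |\widetilde{\mathcal{G}}(T_{A})|,
\end{equation*}
where $g(d)$ is the number of genera of primitive forms of discriminant $d = d(\mathcal{O})$.

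Next I would observe that the primitivity of $T_{A}$ forces $n=1$ in each of the shapes $\begin{pmatrix} 2n & 0 \\ 0 & 2n \end{pmatrix}$ and $\begin{pmatrix} 2n & n \\ n & 2n \end{pmatrix}$, since these correspond to the forms $n(x^{2}+y^{2})$ and $n(x^{2}+xy+y^{2})$ respectively. Hence the exceptional cases of Theorem \ref{main2} are avoided, and Proposition \ref{Pic number 4 generic} yields
\begin{equation*}
\widetilde{\delta}(A) = \tfrac{1}{2} \cdot |\widetilde{\mathcal{G}}(T_{A})| \cdot |O(D_{T_{A}})|.
\end{equation*}
Combining the two displays, the equality $\widetilde{\delta}(A) = |\mathcal{C}(\mathcal{O})|$ reduces to the purely arithmetic identity
\begin{equation*}
g(d) = \tfrac{1}{2}\,|O(D_{T_{A}})|.
\end{equation*}

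The main obstacle is this last identity, which is a reformulation of Gauss's genus theory in the language of discriminant forms. I would verify it by a prime-by-prime comparison: the left-hand side is the classical $g(d) = 2^{\mu(d)-1}$ with $\mu(d)$ read off from the prime factorization of $d$ via the theory of generic characters, while the right-hand side splits along the $p$-primary decomposition of the finite quadratic form $D_{T_{A}}$ and can be evaluated using the explicit rank-$2$ computation of $|O(D_{L})|$ carried out in Section \ref{section 7}. The delicate part is the treatment of the prime $2$ (where the types $u_{1}(2)$, $v_{1}(2)$, $w_{k,\pm 1}^{\varepsilon}(2)$ of $2$-adic Jordan blocks each contribute differently) and the correct bookkeeping of the conductor of $\mathcal{O}$ when the order is non-maximal. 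Once that matching is carried out at each prime, the displayed identity follows, proving both the Shioda-Mitani formula and, as a byproduct, the expression of the class-group genus number in terms of $D_{T_{A}}$ referenced as Corollary \ref{expression of class number} in the introduction.
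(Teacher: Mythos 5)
The paper does not actually prove this statement: it is imported verbatim from Shioda--Mitani \cite{S-M} (Theorem 4.7), whose proof is ideal-theoretic, and the paper's own logic runs in the opposite direction --- Corollary \ref{expression of class number} (including the identity ``number of genera $=\frac{1}{2}|O(D_{T_A})|$'') is \emph{deduced from} Theorem \ref{S-M} together with Propositions \ref{Pic number 4 generic}, \ref{Pic number 4 exceptional 1}, \ref{Pic number 4 exceptional 2}. Your proposal reverses this flow, which is legitimate in principle, but it makes the identity $g(d)=\frac{1}{2}|O(D_{T_A})|$ the entire content of the proof, and at that point you only outline a prime-by-prime verification (``the delicate part is the treatment of the prime $2$ \dots'') rather than carry it out. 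That is the genuine gap: you must establish this identity independently --- it cannot be borrowed from Corollary \ref{expression of class number} without circularity --- and while it is true (the paper remarks it ``can be proved directly''), matching Gauss's genus count $2^{\mu(d)-1}$, including the conductor of a non-maximal order and the $2$-adic cases, against the formulas of Theorem \ref{order of isom grp of disc form} is precisely the substantive work, not a routine footnote.

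There is also a smaller but real slip in the reduction step. Primitivity of $T_A$ does force $n=1$ in the shapes $\begin{pmatrix} 2n & 0 \\ 0 & 2n \end{pmatrix}$ and $\begin{pmatrix} 2n & n \\ n & 2n \end{pmatrix}$, so case $(4)$ of Theorem \ref{main2} is avoided; but Proposition \ref{Pic number 4 generic} excludes these shapes for \emph{every} $n$, including $n=1$, because its proof uses $SO(T_A)=\{\pm\mathrm{id}\}$, which fails for $\begin{pmatrix} 2 & 0 \\ 0 & 2 \end{pmatrix}$ and $\begin{pmatrix} 2 & 1 \\ 1 & 2 \end{pmatrix}$. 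So for the two primitive lattices of discriminant $4$ and $3$ you may not invoke that proposition; you must instead use the $n=1$ cases of Propositions \ref{Pic number 4 exceptional 1} and \ref{Pic number 4 exceptional 2}, which give $\widetilde{\delta}(A)=1$, consistent with $|\mathcal{C}(\mathcal{O})|=1$ for $d(\mathcal{O})=-4,-3$. With these two cases treated separately and the genus-number identity actually proved, your argument would yield the Shioda--Mitani formula by a lattice-theoretic route genuinely different from the cited ideal-theoretic one; as written, however, both the pivotal identity and the boundary cases are asserted rather than established.
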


\begin{corollary}\label{expression of class number}
Let $A$, $\mathcal{O}$ be as in Theorem \ref{S-M}.  
Then 
\begin{equation}\label{eqn: class number}
| \mathcal{C}(\mathcal{O})  | = 
\frac{1}{2} \cdot | \: \widetilde{\mathcal{G}}(T_{A}) \: | \cdot | \: O(D_{T_{A}}) \: | .
\end{equation}
In particular, 
the number of genera with discriminant $-{\rm det\/}(T_{A})$ 
is given by  
$\frac{1}{2} | O(D_{T_{A}}) |$. 
\end{corollary}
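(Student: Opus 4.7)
The plan is to derive both assertions by comparing Shioda--Mitani's formula (Theorem \ref{S-M}) with our lattice-theoretic formula (Proposition \ref{Pic number 4 generic}) for $\widetilde{\delta}(A)$, and then invoking classical Gauss genus theory.

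First I would check that the hypothesis ``$T_{A}$ is primitive'' places us in the generic case of Proposition \ref{Pic number 4 generic}. The excluded lattices in that proposition are
$\begin{pmatrix} 2n & 0 \\ 0 & 2n \end{pmatrix}$ and $\begin{pmatrix} 2n & n \\ n & 2n \end{pmatrix}$ with $n>1$, which are by definition $\begin{pmatrix} 2 & 0 \\ 0 & 2 \end{pmatrix}(n)$ and $\begin{pmatrix} 2 & 1 \\ 1 & 2 \end{pmatrix}(n)$ respectively, hence not primitive. So primitivity rules them out, and Proposition \ref{Pic number 4 generic} applies to give
\begin{equation*}
\widetilde{\delta}(A) = \tfrac{1}{2} \cdot |\widetilde{\mathcal{G}}(T_{A})| \cdot |O(D_{T_{A}})|.
\end{equation*}
Equating this with $\widetilde{\delta}(A) = |\mathcal{C}(\mathcal{O})|$ from Theorem \ref{S-M} produces the displayed identity $(\ref{eqn: class number})$.

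For the ``In particular'' assertion, I would recall the classical correspondence, due to Gauss, between the ideal class group $\mathcal{C}(\mathcal{O})$ and proper equivalence classes of primitive positive-definite integral binary quadratic forms of discriminant $d(\mathcal{O}) = -\det(T_{A})$. Under this correspondence, the genus of a form (in Gauss's sense) matches the proper genus $\widetilde{\mathcal{G}}$ of the associated oriented lattice, and the principal genus corresponds to the subgroup of squares $\mathcal{C}(\mathcal{O})^{2}$. Consequently every genus is a coset of $\mathcal{C}(\mathcal{O})^{2}$ in $\mathcal{C}(\mathcal{O})$, all genera have the common cardinality $|\widetilde{\mathcal{G}}(T_{A})| = |\mathcal{C}(\mathcal{O})^{2}|$, and the total number of genera equals
\begin{equation*}
\frac{|\mathcal{C}(\mathcal{O})|}{|\widetilde{\mathcal{G}}(T_{A})|} = \frac{1}{2} |O(D_{T_{A}})|,
\end{equation*}
where the last equality is just $(\ref{eqn: class number})$ rearranged.

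There is no real obstacle beyond bookkeeping: the substantive input -- the Shioda--Mitani theorem, the weak formula of Proposition \ref{Pic number 4 generic}, and Gauss's structure theorem that genera are cosets of $\mathcal{C}(\mathcal{O})^{2}$ -- are all either cited or already proved. The one delicate point is ensuring that ``proper genus'' as defined via orientation-preserving isometries of rank $2$ even lattices coincides with Gauss's notion of genus (equivalence under $SL_{2}(\mathbb{Z})$ together with $p$-adic equivalence for all $p$), but this is standard under the form--lattice dictionary.
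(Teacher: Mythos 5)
Your overall route (equate the lattice formula for $\widetilde{\delta}(A)$ with Shioda--Mitani, then use the form--lattice dictionary and the fact that all genera in a class group have the same cardinality) is exactly the paper's, and the second half of your argument is fine. But there is a genuine gap in the first step: you claim that the lattices excluded from Proposition \ref{Pic number 4 generic} are $\begin{pmatrix} 2n & 0 \\ 0 & 2n \end{pmatrix}$ and $\begin{pmatrix} 2n & n \\ n & 2n \end{pmatrix}$ \emph{with $n>1$}, so that primitivity of $T_{A}$ puts you in the generic case. That is not what the proposition excludes: it excludes these lattices for \emph{all} $n$, including $n=1$, since for $n=1$ one has $SO(T_{A})\simeq{\Z}/4{\Z}$ resp.\ ${\Z}/6{\Z}$ rather than $\{\pm{\rm id}\}$ (and its proof also uses $|D_{T_{A}}|>4$, which fails for determinants $3$ and $4$). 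The $n=1$ lattices $\begin{pmatrix} 2 & 0 \\ 0 & 2 \end{pmatrix}$ and $\begin{pmatrix} 2 & 1 \\ 1 & 2 \end{pmatrix}$ are primitive, hence fall under the hypotheses of Theorem \ref{S-M} and of the corollary (which, unlike Corollary \ref{primitivity and conductor}, does not exclude ${\rm det}(T_{A})=3,4$), but your argument does not cover them.

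To close the gap you must also invoke Propositions \ref{Pic number 4 exceptional 1} and \ref{Pic number 4 exceptional 2} with $n=1$, which give $\widetilde{\delta}(A)=1$ in these two cases, and check directly that $\frac{1}{2}\,|\widetilde{\mathcal{G}}(T_{A})|\cdot|O(D_{T_{A}})|=1$ there: in both cases the genus consists of the single class $T_{A}$, which admits an orientation-reversing isometry, so $|\widetilde{\mathcal{G}}(T_{A})|=1$, while $|O(D_{T_{A}})|=2$ (for $D_{T_{A}}\simeq({\Z}/2{\Z})^{2}$ with form ${\rm diag}(\frac12,\frac12)$, and for $D_{T_{A}}\simeq{\Z}/3{\Z}$). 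Since $|\mathcal{C}(\mathcal{O})|=1$ for discriminants $-3$ and $-4$, the identity $(\ref{eqn: class number})$ holds in these cases too; this is precisely why the paper's proof cites all three of Propositions \ref{Pic number 4 generic}, \ref{Pic number 4 exceptional 1}, \ref{Pic number 4 exceptional 2} rather than the generic one alone. (Alternatively, citing Theorem \ref{main2}$(3)$, whose exclusion really is only for $n>1$, would make your one-line comparison legitimate, but that theorem itself rests on the exceptional propositions.)
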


\begin{proof}
The first equality $(\ref{eqn: class number})$ follows from 
the comparison of the Shioda-Mitani formula $(\ref{S-M formula})$ 
and Propositions \ref{Pic number 4 generic}, \ref{Pic number 4 exceptional 1}, \ref{Pic number 4 exceptional 2}.  
The group of proper equivalence classes of 
primitive positive-definite rank $2$ even oriented lattices with determinant ${\rm det\/}(T_{A})$ 
is canonically isomorphic to the group $\mathcal{C}(\mathcal{O})$ (see  \cite{Co} Theorem 7.7). 
Hence the second assertion follows from the fact that 
all proper genera in a given class group 
consist of the same number of classes.  
\end{proof}

Of course, 
Corollary \ref{expression of class number} 
can be proved directly 
without going through decompositions of Abelian surfaces.  
Shioda-Mitani's formula is extended as follows.

\begin{corollary}\label{primitivity and conductor}
Let $A$ be an Abelian surface as in Theorem \ref{S-M}  
and suppose that ${\rm det\/}(T_{A}) \not= 3, 4$.  
For a natural number $N>1$  
let $A_{N}$ be the Abelian surface as in Corollary \ref{multiple of T_{A}} 
and $\mathcal{O}_{N}$ be the order with discriminant $d(\mathcal{O}_{N}) = -{\rm det\/}(T_{A}(N))$.  
Then 
\begin{equation*}
\widetilde{\delta}(A_{N}) = 2^{\tau (N)} \cdot | \: \mathcal{C}(\mathcal{O}_{N}) \: |.  
\end{equation*}
\end{corollary}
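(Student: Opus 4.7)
The plan is to apply Corollary \ref{multiple of T_{A}} twice in succession to express $\widetilde{\delta}(A_N)$ in terms of $\widetilde{\delta}(A)$, invoke Theorem \ref{S-M} to substitute $\widetilde{\delta}(A) = |\mathcal{C}(\mathcal{O})|$, and then recover the factor $2^{\tau(N)} |\mathcal{C}(\mathcal{O}_N)|$ via the classical class number formula for non-maximal orders in imaginary quadratic fields. Write $D = {\rm det}(T_A)$, let $d_K$ be the fundamental discriminant of the underlying imaginary quadratic field $K$, and let $f_0$ be the conductor of $\mathcal{O}$ in the maximal order $\mathcal{O}_K \subset K$, so that $-D = f_0^2 d_K$ and $\mathcal{O}_N$ has conductor $Nf_0$. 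Factor $N = N_1 N_2$ where $N_1$ collects the primes of $N$ coprime to $D$ and $N_2$ those dividing $D$; then $\gcd(N_1, D) = 1$ and $N_2 \mid D^a$ for some $a$. The hypothesis ${\rm det}(T_A) \neq 3, 4$ together with primitivity of $T_A$ excludes the exceptional forms $\begin{pmatrix} 2n & 0 \\ 0 & 2n \end{pmatrix}$ and $\begin{pmatrix} 2n & n \\ n & 2n \end{pmatrix}$ for $T_A$ (they are non-primitive for $n > 1$ and have determinants $4$, $3$ for $n = 1$), and the same exclusion persists for $T_A(N_2)$ since an $N_2$-multiple of $T_A$ being exceptional would force $T_A$ itself to be.

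With this setup, part (3) of Corollary \ref{multiple of T_{A}} applied to the passage $A \mapsto A_{N_2}$ gives $\widetilde{\delta}(A_{N_2}) = \widetilde{\delta}(A) \cdot 2^{\tau(N_2)} N_2$. Part (2) applied to $A_{N_2} \mapsto (A_{N_2})_{N_1} = A_N$ is available since $\gcd(N_1, N_2^2 D) = 1$, and $\chi_p(-N_2^2 D) = \chi_p(-D)$ for $p \mid N_1$. Combining and using Theorem \ref{S-M},
\begin{equation*}
\widetilde{\delta}(A_N) = |\mathcal{C}(\mathcal{O})| \cdot 2^{\tau(N)} \cdot N \cdot \prod_{p \mid N_1} \Bigl(1 - \frac{\chi_p(-D)}{p}\Bigr).
\end{equation*}
On the other hand, the classical ring class number formula
\begin{equation*}
|\mathcal{C}(\mathcal{O}_N)| = |\mathcal{C}(\mathcal{O})| \cdot N \cdot \prod_{\substack{p \mid N \\ p \nmid f_0}} \Bigl(1 - \frac{\chi_p(d_K)}{p}\Bigr)
\end{equation*}
is valid because ${\rm det}(T_A) \neq 3, 4$ forces $[\mathcal{O}_K^\times : \mathcal{O}^\times] = [\mathcal{O}_K^\times : \mathcal{O}_N^\times]$. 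Its product splits as follows: for $p \mid N_1$, one has $p \nmid f_0$ (as $f_0 \mid D$) and $\chi_p(d_K) = \chi_p(f_0^2 d_K) = \chi_p(-D)$; for $p \mid N_2$ with $p \nmid f_0$, $p$ divides $d_K$, so the factor is $1$. Hence the product reduces to $\prod_{p \mid N_1}(1 - \chi_p(-D)/p)$, and the two displays match up to the factor $2^{\tau(N)}$, yielding the desired identity.

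The crux of the argument, and the main obstacle, is verifying that the Legendre-symbol product arising from the recursive application of Corollary \ref{multiple of T_{A}} (ranging over primes of $N_1$) coincides with the product in the classical ring class number formula (ranging over all primes of $N$ coprime to $f_0$). This identification relies on the dichotomy among primes of $N_2$: each either divides $f_0$, in which case it is omitted from the class-number product, or divides $d_K$ but not $f_0$, in which case its Legendre symbol vanishes. Confirming this, along with the coincidence of unit indices in the borderline case $d_K \in \{-3, -4\}$ (where $\mathcal{O}^\times = \mathcal{O}_N^\times = \{\pm 1\}$ because the conductors $f_0, Nf_0 \geq 2$), completes the proof.
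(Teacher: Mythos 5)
Your proposal is correct and is essentially the paper's own argument: the paper likewise obtains the result by combining Theorem \ref{S-M} with parts $(2)$ and $(3)$ of Corollary \ref{multiple of T_{A}} and comparing against the class number formula for non-maximal orders (\cite{Co}, Corollary 7.28), and your write-up simply makes explicit the splitting $N=N_{1}N_{2}$, the persistence of the non-exceptionality hypothesis for $T_{A}(N_{2})$, and the role of ${\rm det}(T_{A})\neq 3,4$ in equalizing the unit indices. The only cosmetic caveat is the degenerate cases $N_{1}=1$ or $N_{2}=1$ (where one applies the corollary only once, since $\tau(1)=1$ in the paper's convention), which does not affect the final identity.
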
 

\begin{proof}
By the assumption, 
we have 
$\widetilde{\delta}(A) = |  \mathcal{C}(\mathcal{O})  |$ 
for the order  $\mathcal{O}$ with $d(\mathcal{O}) = -{\rm det\/}(T_{A})$.   
Then our assertion follows from the comparison of 
the equations $(\ref{decomp number divisible coprime case})$, $(\ref{decomp number divisible totally non-coprime case})$  
and \cite{Co} Corollary 7.28.  
\end{proof}

An ideal-theoretic proof of this corollary is also available.

\subsection{Abelian surfaces with decomposition number $1$}

We shall study  
Abelian surfaces with $\rho (A)=4$ and $\delta (A)=1$.  
Such Abelian surfaces can be classified as follows : 
\begin{eqnarray*}
& ({\rm i\/}) &     T_{A} \: \text{is primitive}, \;  \widetilde{\delta}(A)=1.  \\ 
& ({\rm ii\/}) &    T_{A} \: \text{is primitive},  \; \widetilde{\delta}(A)=2.  \\ 
& ({\rm iii\/}) &   T_{A} \: \text{is not primitive}, \;  \widetilde{\delta}(A)=2.   
\end{eqnarray*} 
In the below 
we see that there are exactly thirteen, twenty-nine, four Abelian surfaces 
in the classes $({\rm i\/})$, $({\rm ii\/})$, $({\rm iii\/})$ respectively.

\begin{example}\label{only one symmetric decomposition}
Let $A$ be an Abelian surface such that $\rho (A)=4$ and   
${\rm Dec\/} (A) = \{ (E, E) \} $ for an elliptic curve $E$.    
Then $E$ is isomorphic to one of the following thirteen elliptic curves : 
\begin{eqnarray*}\label{list of elliptic curves}
&                 & \Bigl\{   \: \:  E(\frac{1+ \sqrt{\Delta}}{2}) \: \:  \Bigr. \Bigl| \; \; 
                                                    \Delta = -3, -7, -11, -19, -27, -43, -67, -163 \: \Bigr\}    \\
& \sqcup &  \Bigl\{   \: E( \sqrt{\Delta} ) \: \Bigr. \Bigl| \: \: 
                                                    \Delta = -1, -2, -3, -4, -7 \: \Bigr\} , 
\end{eqnarray*} 
where $E(\tau ) = {\C}/ {\Z}+ {\Z}\tau $.  
\end{example} 

\begin{proof}
By Corollary \ref{self-product} and Shioda-Mitani formula $(\ref{S-M formula})$,  
we have $\widetilde{\delta}(A) = 1$ 
if and only if 
$T_{A}$ is primitive and the class number $| \mathcal{C}(\mathcal{O})  |$ 
of the corresponding order $\mathcal{O}$ is equal to $1$.  
As a result of Heegner-Baker-Stark's theorem,    
$T_{A}$ is one of the following lattices (see \cite{Co} Theorem 7.30 (ii)) :  
\begin{eqnarray}\label{binary forms with class number 1}
&  &   
\begin{pmatrix}  2 &  1  \\
                                 1 &  2 \end{pmatrix} , 
\begin{pmatrix}  2 &  0  \\
                                 0 &  2 \end{pmatrix} , 
\begin{pmatrix}  2 &  1  \\
                                 1 &  4 \end{pmatrix} , 
\begin{pmatrix}  2 &  0  \\
                                 0 &  4 \end{pmatrix} , 
\begin{pmatrix}  2 &  1  \\
                                 1 &  6 \end{pmatrix} , 
\begin{pmatrix}  2 &  0  \\
                                 0 &  6 \end{pmatrix} , 								 
\begin{pmatrix}  2 &  0  \\
                                 0 &  8 \end{pmatrix} , 
\nonumber\\   
&  & 
\begin{pmatrix}  2 &  1  \\
                                 1 &  10 \end{pmatrix} , 
\begin{pmatrix}  2 &  1  \\
                                 1 &  14 \end{pmatrix} , 
\begin{pmatrix}  2 &  0  \\
                                 0 &  14 \end{pmatrix} , 
\begin{pmatrix}  2 &  1  \\
                                 1 &  22 \end{pmatrix} , 
\begin{pmatrix}  2 &  1  \\
                                 1 &  34 \end{pmatrix} , 
\begin{pmatrix}  2 &  1  \\
                                 1 &  82 \end{pmatrix} .  
\end{eqnarray}
By Shioda-Mitani theory 
we can determine the elliptic curve $E$ explicitly from the transcendental lattice $T_{A}$ 
: see \cite{S-M},  Section 3.  
\end{proof}

On the other hand, 
for those $A$ with primitive $T_{A}$ and $\delta (A)=1$, $\widetilde{\delta}(A)=2$  
we have the following.

\begin{example}\label{decomp number 1 non-self-product}
There exist natural one-to-one correspondences 
between the following two sets : 

$({\rm a\/})$ The set of isomorphism classes of Abelian surfaces $A$ with $\rho (A)=4$ 
such that $T_{A}$ is primitive and ${\rm Dec\/}(A) = \{  (E_{1},  E_{2})  \}$, $E_{1} \not\simeq E_{2}$.  


$({\rm b\/})$ The set of imaginary quadratic order with class number $2$.  
\end{example}

\begin{proof}
For an Abelian surface $A$ in the set $({\rm a\/})$,  
we associate the order $\mathcal{O}$ with discriminant $d(\mathcal{O}) = -{\rm det\/}(T_{A})$.  
By Shioda-Mitani formula we have $| \mathcal{C}(\mathcal{O}) | =2$. 
Then $T_{A}$ corresponds to the non-trivial element of the class group $\mathcal{C}(\mathcal{O})$.   
\end{proof}


Imaginary quadratic fields with class number $2$ 
are also classified by \cite{M-W}, \cite{St}.  
As a result, 
imaginary quadratic orders with class number $2$ are given by the following twenty-nine discriminants :  
\begin{eqnarray*}
-d(\mathcal{O})  & = &  15,  20,  24,  32,  35,  36,  40,  48,  51,  52,  60,  64,  75,  88,  91,  99,  \\ 
                              &    &   100, 112, 115,  123,  147,  148,  187,  232,  235,  267,  403,  427,  748. 
\end{eqnarray*}
Enumeration of the non-principal forms is a rather straightforward task 
and is left to the reader.

\begin{example}\label{dec number 1 non primitive}
Let $A$ be an Abelian surface with $\rho (A)=4$ and 
assume that $T_{A}$ is {\it not\/} primitive. 
If $\delta (A)=1$,  
then $A$ is isomorphic to one of the following four Abelian surfaces :  
\begin{eqnarray*}
E(\sqrt{-1})\times E(2\sqrt{-1}),    &  &
E(\tau _{1})\times E(2\tau _{1}),     \\
E(\tau _{1})\times E(3\tau _{1}),   &  & 
E(\tau _{2})\times E(2\tau _{2}),    
\end{eqnarray*} 
where 
$\tau _{1}= \frac{1+\sqrt{-3}}{2}$,  
$\tau _{2}= \frac{1+\sqrt{-7}}{2}$,  
and  
$E(\tau)={\C}/{\Z}+{\Z}\tau $.   
\end{example}

\begin{proof}
As $T_{A}$ is not primitive, 
we see from Corollary \ref{self-product} that  $\widetilde{\delta} (A)=2$.  
First consider the case 
$T_{A} \simeq  
\begin{pmatrix}  2n &  0  \\
                                0 &  2n \end{pmatrix} $ with  $n>1$.  
Since the actions of 
$SO(T_{A})$ preserve the fibres of the determinant homomorphism $(\ref{determinant homomorphism})$,                                 
we have $\tau (n)=1$ 
so that  $SO(T_{A})$ is a normal subgroup of $O(D_{T_{A}})$ by Lemma \ref{normality 1}.  
Thus we have $| O(D_{T_{A}}) | = 8$,  
which implies by Theorem \ref{order of isom grp of disc form}                           
that $n=2$.  
In the case of 
$T_{A} \simeq  
\begin{pmatrix}  2n &  n  \\
                                n &  2n \end{pmatrix} $ with  $n>1$,  
we have $n=2, 3$ by similar argument.

Next consider the case                                
$T_{A} \not\simeq  
\begin{pmatrix}  2n &  0  \\
                                0 &  2n \end{pmatrix} ,  
\begin{pmatrix}  2n &  n  \\
                                n &  2n \end{pmatrix}$.  
Let $B$ be the Abelian surface such that 
$T_{B}$ is primitive and $T_{B}(n)$ is properly equivalent to $T_{A}$,  $n>1$.  
By Proposition \ref{multiple of T_{A}} 
the number $\widetilde{\delta} (B)$ divides $\widetilde{\delta} (A) = 2$ 
so that $\widetilde{\delta} (B) = 1$ or $2$.  
However, 
the case that $\widetilde{\delta} (B) = 2$   
is impossible by Corollary \ref{multiple of T_{A}} $(2)$, $(3)$.  
Thus we have $\widetilde{\delta} (B) = 1$ 
so that 
$T_{B}$ is one of the lattices in the list $(\ref{binary forms with class number 1})$.   
It follows from Corollary \ref{primitivity and conductor} that $n=2$ and 
$T_{B} =  
\begin{pmatrix}  2 & 1  \\
                              1 & 4 \end{pmatrix} $ 
is the only case.  
\end{proof}

\section{Order of isometry group of discriminant form}\label{section 7} 
The aim of this section is to calculate the order of the group 
$O(D_{L})$ for a rank $2$ even lattice $L$.  
This section may be read independently of the previous sections. 
By the orthogonal decomposition 
$D_{L} = \mathop{\oplus}_{p} D_{p}$ 
where $D_{p}$ is the $p$-component for the prime number $p$,  
we have the canonical decomposition 
\begin{equation}\label{sect.7 decomp of isometry group}
O(D_{L}) = \mathop{\bigoplus}_{p} O(D_{p}). 
\end{equation}
Thus we first calculate $|O(D_{p})|$ for each prime number $p$, 
and then put them together to $|O(D_{L})|$.  
Throughout this section,   
a {\it finite quadratic form\/} means a finite Abelian group $D$ 
endowed with a quadratic form 
$q : D \to {\Q}/2{\Z}$ 
such that the associated bilinear form 
$b : D \times D \to {\Q}/{\Z}$ 
is non-degenerate. 
For a finite quadratic form $(D, q)$ on a $p$-group $D$ with $p \not= 2$, 
we identify $q$ with the bilinear form $b$.

\subsection{Local calculations}\label{subsection 7.1}
By the classification of finite quadratic forms \cite{Wa}, 
a finite quadratic form on an Abelian $p$-group of length $2$ is 
isometric to one of the following forms : 
\begin{eqnarray*}
&  A_{p, k}^{\theta , \theta '}   &  = 
          \begin{pmatrix}  \theta p^{-k} &   0  \\
                                           0                        &    \theta 'p^{-k} \end{pmatrix}  \: \: \: \: 
                  {\rm on\/} \: \:  ({\Z}/p^{k}{\Z})^{2},  \: \: \: \: 
                                     \theta , \theta ' \in {\Z}_{p}^{\times}/({\Z}_{p}^{\times})^{2} ,                                 \\
&  B_{p, l, k}^{\theta , \theta '}  & = 
          \begin{pmatrix}  \theta p^{-l} &   0  \\
                                           0                        &    \theta 'p^{-k} \end{pmatrix}  \: \: \: \: 
                  {\rm on\/} \: \:  {\Z}/p^{l}{\Z} \oplus {\Z}/p^{k}{\Z},  \: \: \: \: 
                                l>k, \: \: \:  \:    \theta , \theta ' \in {\Z}_{p}^{\times}/({\Z}_{p}^{\times})^{2} ,          \\
&  V_{k}  & = 
           \begin{pmatrix}  2^{1-k}     &   2^{-k}   \\
                                            2^{-k}        &   2^{1-k}   \end{pmatrix}  \: \: \: \: 
                  {\rm on\/} \: \:  ({\Z}/2^{k}{\Z})^{2},                                                                                                         \\
&  U_{k}   & =  
           \begin{pmatrix}  0              &   2^{-k}   \\
                                            2^{-k}    &  0              \end{pmatrix}  \: \: \: \: 
                  {\rm on\/} \: \:  ({\Z}/2^{k}{\Z})^{2}.                                                                      
\end{eqnarray*}
The following proposition is essentially a consequence of successive approximation.

\begin{prop}\label{Hensel lemma}
Let $(D, q)$ be a finite quadratic form on a $p$-group 
with no direct summand of order $2$. 
For the finite quadratic form 
\begin{equation*}\label{reduced quadratic form}
(\overline{D}, \bar{q}) = (D/N,  \: p\cdot q), \: \: \: \: \: N = \{ x \in D, \: px=0 \} ,  
\end{equation*}
the natural homomorphism 
\begin{equation*}\label{reducing isometries}
\kappa : O(D, q) \longrightarrow O(\overline{D}, \bar{q}) 
\end{equation*}
is surjective. 
\end{prop}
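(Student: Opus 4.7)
The strategy is Hensel-style successive approximation. Before addressing surjectivity, I would verify that $\kappa$ is well-defined. Since $N$ is characterized intrinsically as the $p$-torsion of $D$, every $\sigma\in O(D,q)$ preserves $N$ and induces an endomorphism of $D/N$. The formula $\bar q(x+N):=p\cdot q(x)\in \mathbb{Q}/2\mathbb{Z}$ is independent of the lift: for $y\in N$, one computes $p\cdot q(x+y)-p\cdot q(x)=p\cdot q(y)+b(x,py)=p\cdot q(y)$, and the hypothesis that $D$ has no direct summand of order $2$ forces $p\cdot q(y)\equiv 0\bmod 2\mathbb{Z}$ for every $y\in N$ (this is trivial for odd $p$, and for $p=2$ it rules out the single problematic case where $q(y)\in\tfrac12+\mathbb{Z}$ on a $\mathbb{Z}/2\mathbb{Z}$ summand). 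That $\kappa$ preserves $\bar q$ is then immediate.

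For surjectivity, I would first reduce to the case of a single Jordan block. For $p$ odd, $D$ admits a canonical orthogonal decomposition $D=\bigoplus_k D^{(k)}$ into homogeneous components of exponent exactly $p^k$, since these components can be recovered intrinsically from the filtrations $p^jD$ and $D[p^j]$. Any $\bar\sigma\in O(\bar D,\bar q)$ preserves the induced decomposition of $\bar D=\bigoplus_k \overline{D^{(k)}}$, so it suffices to lift on each block. On a single homogeneous block of exponent $p^k$, the form is non-degenerate over $\mathbb{Z}/p^k\mathbb{Z}$ and the reduction $\overline{D^{(k)}}$ is non-degenerate over $\mathbb{Z}/p^{k-1}\mathbb{Z}$; lifting an isometry from the latter to the former is a direct application of Witt's extension theorem for non-degenerate quadratic spaces over $\mathbb{Z}/p^k\mathbb{Z}$, which one can prove by choosing an orthogonal basis on each side and matching them up.

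The main obstacle is the case $p=2$, where the Jordan decomposition is not orthogonal and one must accommodate the blocks $U_k$ and $V_k$ from Subsection~\ref{subsection 7.1} alongside the diagonal $A$- and $B$-blocks. Here I would proceed one level at a time: pass from exponent $2^k$ to $2^{k-1}$ by quotienting by the socle of the top Jordan layer. At each stage, one lifts $\bar\sigma$ first as a group homomorphism (using that the relevant short exact sequence of abelian $2$-groups splits), and then corrects the resulting isometry defect. The defect lies in a group that is itself controlled by isometries of the lower-level blocks, so one can cancel it by composing with an isometry in $\ker\kappa$. The hypothesis on no $\mathbb{Z}/2\mathbb{Z}$-summand is used here to guarantee that the reduction at each stage is still well-behaved (the form $\bar q$ stays non-degenerate in the required sense) and that the defect-cancellation step always admits a solution. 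Iterating this procedure until all of $N$ is exhausted produces the required lift of $\bar\sigma$.
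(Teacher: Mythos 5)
Your well-definedness paragraph is fine, but the surjectivity argument has two genuine gaps. For odd $p$, the reduction to a single Jordan block rests on a false claim: the homogeneous components $D^{(k)}$ are \emph{not} intrinsically recoverable from the filtrations $p^jD$ and $D[p^j]$ (those filtrations are canonical, but the complements realizing the Jordan splitting are not), and an isometry of $\overline{D}$ need not preserve a chosen splitting. Concretely, take $\overline{D}=\Z/p\oplus\Z/p^{2}$ with form $\mathrm{diag}(a/p,\,b/p^{2})$ (which arises as the reduction of $\mathrm{diag}(a/p^{2},b/p^{3})$): one checks that for suitable $t$ and a unit $v$ with $v^{2}\equiv 1 \bmod p$ there is an isometry sending $e_{1}\mapsto e_{1}+pte_{2}$ and $e_{2}\mapsto e_{1}+ve_{2}$, which preserves neither summand. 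So ``it suffices to lift on each block'' does not follow; to salvage this route you would need a Witt-type cancellation step showing that the image splitting can be carried back to the original one by an isometry already known to lift, and that argument is absent. (Also, ``Witt's extension theorem over $\Z/p^{k}\Z$'' as you invoke it is really a Hensel-type lifting statement and deserves more than the one-line justification, though for odd homogeneous blocks it is repairable.)

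The second and larger gap is $p=2$, which you correctly identify as the main obstacle but then settle by assertion: lifting $\bar\sigma$ as a group homomorphism and ``cancelling the isometry defect by an element of $\ker\kappa$'' is precisely the content of the proposition at that scale, and nothing is offered to show the defect is always cancellable. This is exactly where the blocks $U_k$, $V_k$ and the odd-type blocks $A^{\theta,\theta'}_{2,k}$, $B^{\theta,\theta'}_{2,l,k}$ interact across adjacent scales, and the kernels of the reduction maps computed later in this section are quite small (e.g.\ $\Z/p\Z$ or $(\Z/2\Z)^{2}$), so there is no a priori room to absorb an arbitrary defect. For comparison, the paper's proof avoids all local case analysis: using the hypothesis (no summand of order $2$) and the comparison of the classifications of $\Z_p$-lattices and finite quadratic forms (Nikulin, Prop.\ 1.8.1), it realizes $(D,q)\simeq(D_{L(p)},q_{L(p)})$ for an even $\Z_p$-lattice $L$, identifies $\kappa$ with the natural map $O(D_{L(p)})\to O(D_{L})$, and concludes from the surjectivity of $O(L)\to O(D_{L})$ and $O(L(p))\to O(D_{L(p)})$ (Nikulin, Cor.\ 1.9.6) together with $O(L)=O(L(p))$. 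Either complete your local analysis along those missing steps, or pass through such a lattice-theoretic argument.
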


\begin{proof}
Note that 
$(\overline{D}, \bar{q})$ is well-defined as a finite quadratic form.  
A comparison of the classification of ${\Z}_{p}$-lattices and 
that of finite quadratic forms (cf. \cite{Ni} Proposition 1.8.1) 
enables us to find an even ${\Z}_{p}$-lattice $L$ such that 
$(D_{L(p)}, q_{L(p)}) \simeq (D, q)$.  
Then the homomorphism $\kappa $ is identified with 
the homomorphism 
\begin{equation*}
\kappa ' : O(D_{L(p)}, q_{L(p)}) 
= O(\frac{1}{p}L^{\vee}/L, \: p\cdot q_{L}) 
\to  O(\frac{1}{p}L^{\vee}/\frac{1}{p}L, \: p^{2}\cdot q_{L}) 
\simeq O(D_{L}, q_{L}). 
\end{equation*}
Consider the natural homomorphisms 
$O(L) \to O(D_{L})$  and 
$O(L(p)) \to O(D_{L(p)})$,  
which are surjective by Corollary 1.9.6 of \cite{Ni}. 
Since the diagram 
\begin{equation*}
\begin{array}{ccc} 
  O(L(p))                                             &                     =                                      &          O(L)                                         \\
   \downarrow                                   &                                                             &        \downarrow                            \\
  O(D_{L(p)})                                     &    \stackrel{\kappa '}{\to}                    &       O(D_{L}) ,  
\end{array}
\end{equation*}
commutes,   
we see that $\kappa '$ is surjective.  
\end{proof}

Thus natural reduction homomorphisms 
\begin{equation*}
O(A_{p, k}^{\theta , \theta '})  \to  O(A_{p, k-1}^{\theta , \theta '}), \: \: \: 
O(B_{p, l, k}^{\theta , \theta '})  \to  O(B_{p, l-1, k-1}^{\theta , \theta '}), \: \: \:  \cdots 
\end{equation*}
are defined and are surjective if 
$p \not= 2$ or $k \geq 2$.

\begin{lemma}\label{kernel of reduction general}
We have the following isomorphisms. 

$(1)$  ${\rm Ker\/} (O(A_{p, k}^{\theta , \theta '})  \twoheadrightarrow  O(A_{p, k-1}^{\theta , \theta '})) 
                 \simeq {\Z}/p{\Z}$ ,   
              where $k\geq 2$ if $p \not= 2$,  and  $k \geq 3$ if $p=2$.  

$(2)$  ${\rm Ker\/} (O(B_{p, l, k}^{\theta , \theta '})  \twoheadrightarrow  O(B_{p, l-1, k-1}^{\theta , \theta '}))  
                 \simeq {\Z}/p{\Z}$ ,  
              where $k\geq 2$ if $p \not= 2$,  and $k \geq 3$ if $p=2$.  

$(3)$  ${\rm Ker\/} (O(V_{k})  \twoheadrightarrow  O(V_{k-1})) \simeq {\Z}/2{\Z}$   
              where $k\geq 2$.  

$(4)$  ${\rm Ker\/} (O(U_{k})  \twoheadrightarrow  O(U_{k-1})) \simeq {\Z}/2{\Z}$  
              where $k\geq 2$.  
\end{lemma}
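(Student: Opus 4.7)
The plan is to parametrize kernel elements explicitly and reduce to a linear counting problem. By Proposition~\ref{Hensel lemma} and its proof, the reduction $\kappa$ sends an isometry $\gamma$ of $(D,q)$ to its induced action on $\bar D = D/N$, where $N = \{x \in D : px = 0\}$. Hence $\gamma \in \ker\kappa$ if and only if $(\gamma - \mathrm{id})(D) \subset N$. In each of the four cases one checks that $N = p^{k-1}D$ (with the obvious modification for $B_{p,l,k}^{\theta,\theta'}$, where $N$ is the direct sum of the top layers of the two cyclic summands), so one may write $\gamma = \mathrm{id} + p^{k-1} M$, with $M$ represented, in the given basis, by a $2\times 2$ matrix whose entries are naturally elements of $\mathbb{F}_p$.

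Unfolding the isometry requirement $q(\gamma x) = q(x)$ via $q(u+v) = q(u)+q(v)+b(u,v)$ yields
\[
p^{k-1} b(x, Mx) \;+\; p^{2(k-1)} q(Mx) \;\equiv\; 0 \pmod{2\mathbb{Z}}, \quad x \in D.
\]
The first observation is that in the stated range of $k$ the quadratic summand vanishes modulo $2\mathbb{Z}$: the values of $q$ on each of the four forms have $p$-denominator at most $p^k$, so $p^{2(k-1)} q(Mx)$ is a multiple of $p^{k-2}$ (respectively of $2^{k-1}$ in cases~(3),(4) where $q$ has denominator $2^{k-1}$), which lies in $2\mathbb{Z}$ exactly under the hypotheses $k\ge 2$ ($p$ odd) or $k\ge 3$ ($p=2$) for~(1),(2), and $k\ge 2$ for~(3),(4). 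What remains is the linear condition $M^{T}G + GM \equiv 0$ modulo the appropriate denominator, where $G$ is the Gram matrix of the given form.

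Writing $M = \bigl(\begin{smallmatrix}a & b\\ c & d\end{smallmatrix}\bigr)$ and plugging in the Gram matrices $\mathrm{diag}(\theta p^{-k}, \theta' p^{-k})$, $\mathrm{diag}(\theta p^{-l}, \theta' p^{-k})$, $\bigl(\begin{smallmatrix}2^{1-k} & 2^{-k}\\ 2^{-k} & 2^{1-k}\end{smallmatrix}\bigr)$, $\bigl(\begin{smallmatrix}0 & 2^{-k}\\ 2^{-k} & 0\end{smallmatrix}\bigr)$ respectively, a direct computation reduces the linear system to three independent congruences on $a,b,c,d$ (for instance, in~(1) with $p$ odd: $a\equiv d\equiv 0$ and $b\theta + c\theta' \equiv 0 \pmod{p}$), whose solution space is one-dimensional over $\mathbb{F}_p$. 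This produces exactly $p$ (respectively $2$) elements in the kernel, and the group structure is manifestly cyclic since addition of the parameter $b$ corresponds to composing kernel elements. The main obstacle is the careful $2$-adic bookkeeping in cases~(3),(4) and the boundary $k=3$ of cases~(1),(2) at $p=2$: there the distinction between $q$ and $b$ matters, and one must track the denominators of $q(Mx)$ rather than of $b$ alone to confirm that $2^{2(k-1)}q(Mx) \in 2\mathbb{Z}$ — any weaker divisibility would spoil the linearization and force a nontrivial quadratic correction.
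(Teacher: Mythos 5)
Your proposal is correct and is essentially the paper's own argument: write a kernel element as $\mathrm{id}$ plus a matrix supported on the top layer $N$, impose the isometry congruence, observe that in the stated range of $k$ the quadratic correction term disappears so only a linear system in $a,b,c,d$ over $\mathbb{F}_p$ survives, and count its (one-dimensional) solution space — this is exactly the computation the paper performs for case $(2)$ and leaves to the reader in the other cases, including the sharper modulus needed at $p=2$. One small wording repair: for odd $p$ the vanishing of $p^{2(k-1)}q(Mx)$ in $\Q/2\Z$ is not because a multiple of $p^{k-2}$ is automatically even, but because on an odd $p$-group the quadratic form is identified with the $\Q/\Z$-valued bilinear form (equivalently, its values have even numerator), after which your conclusion and final congruences stand unchanged.
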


\begin{proof}
We prove only the assertion $(2)$.  
Other assertions can be proved analogously and are left to the reader. 
Let $p \not= 2$.  
An isometry 
$\gamma \in O(B_{p, l, k}^{\theta , \theta '})$ 
contained in the kernel of the reduction 
$O(B_{p, l, k}^{\theta , \theta '})  \to  O(B_{p, l-1, k-1}^{\theta , \theta '})$ 
is represented as 
\begin{equation*}
\gamma =         \begin{pmatrix}  1+ p^{l-1}a    &  p^{l-1}b           \\
                                                                 p^{k-1}c       &  1+ p^{k-1}d       \end{pmatrix} ,  \: \: \: 
						a, b, c, d \in {\Z}/p{\Z}.  
\end{equation*}
Since $\gamma $ preserves the quadratic form, 
we have 
\begin{equation}\label{isometry condition general}
{}^t\gamma 		 \begin{pmatrix}  \theta    &             0                     \\
                                                                       0        &   p^{l-k}\theta '       \end{pmatrix}   \gamma  
\equiv 																	   
                       \begin{pmatrix}  \theta    &             0                     \\
                                                              0        &   p^{l-k}\theta '       \end{pmatrix}   \: \: \: 
{\rm mod\/} \: 
                      \begin{pmatrix}  p^{l}{\Z}    &      p^{l}{\Z}              \\
                                                       p^{l}{\Z}       &   p^{l}{\Z}        \end{pmatrix}  	.														
\end{equation} 
Then trivial calculation shows that 
$a=d=0$ and $\theta b + \theta ' c=0$ 
so that 
the kernel is isomorphic to ${\Z}/p{\Z}$.  
For $p=2$,  
we need to replace $(\ref{isometry condition general})$ 
by the equation 
\begin{equation*}
{}^t\gamma 		 \begin{pmatrix}  \theta    &             0                     \\
                                                                       0        &   2^{l-k}\theta '       \end{pmatrix}   \gamma  
\equiv 																	   
                       \begin{pmatrix}  \theta    &             0                     \\
                                                              0        &   2^{l-k}\theta '       \end{pmatrix}   \: \: \: 
{\rm mod\/} \: 
                      \begin{pmatrix}  2^{l+1}{\Z}    &      2^{l}{\Z}              \\
                                                       2^{l}{\Z}          &   2^{l+1}{\Z}        \end{pmatrix}  	.														
\end{equation*}  
\end{proof}

\begin{lemma}\label{kernel of reduction interpolate}
We have the following isomorphisms. 
       
$(1)$  ${\rm Ker\/} (O(A_{2, 2}^{\theta , \theta '})  \twoheadrightarrow  O(A_{2, 1}^{\theta , \theta '})) 
                   \simeq    {\Z}/2{\Z} \oplus {\Z}/2{\Z}$.    
              
$(2)$  ${\rm Ker\/} (O(B_{2, l, 2}^{\theta , \theta '})  \twoheadrightarrow  O(B_{2, l-1, 1}^{\theta , \theta '})) 
                  \simeq    {\Z}/2{\Z} \oplus {\Z}/2{\Z}$.    
    				  
$(3)$ If $p \not= 2$, then 
            ${\rm Ker\/} (O(B_{p, l, 1}^{\theta , \theta '})  \twoheadrightarrow  O(B_{p, l-1, 0}^{\theta , \theta '}))$ 
         	is the dihedral group of order $2p$. 
\end{lemma}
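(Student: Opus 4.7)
The plan is to mimic the strategy of Lemma \ref{kernel of reduction general}: represent each kernel element as a small matrix perturbation of the identity, expand the isometry relations on basis vectors, and read off the surviving degrees of freedom. The new feature is that we sit at the boundary of the stabilization range, where the ``higher-order'' contributions $p^{l-1}a^{2}$, $p^{k-1}d^{2}$ that were killed by $p$-adic size in the previous lemma now contribute genuinely, so the bookkeeping of residues is what drives each count.

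For part $(1)$, I would write $\gamma = \begin{pmatrix} 1+2a & 2b \\ 2c & 1+2d \end{pmatrix}$ acting on $(\Z/4\Z)^{2}$ with $a,b,c,d \in \Z/2\Z$. Expanding $q(\gamma e_{i}) \equiv q(e_{i}) \pmod{2\Z}$, the self-terms $\theta a(a+1)$ and $\theta' d(d+1)$ vanish because $x(x+1) \in 2\Z$, leaving $\theta' c^{2} \equiv 0$ and $\theta b^{2} \equiv 0 \pmod{2}$; since $\theta, \theta'$ are odd units this forces $b=c=0$, and the bilinear relation then follows automatically. The kernel is therefore $\{\operatorname{diag}(1+2a,\, 1+2d)\} \simeq (\Z/2\Z)^{2}$.

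For part $(2)$, I would apply the same template to $\gamma = \begin{pmatrix} 1 + 2^{l-1}a & 2^{l-1}b \\ 2c & 1 + 2d \end{pmatrix}$ on $\Z/2^{l}\Z \oplus \Z/4\Z$, with $l \geq 3$. Because $2^{l-2} \in 2\Z$ for $l \geq 3$, the cross terms $2^{l-2}\theta a^{2}$ and $2^{l-2}\theta b^{2}$ disappear mod $2\Z$, and the conditions reduce to $\theta a + \theta' c^{2} \equiv 0 \pmod{2}$ from $q(\gamma e_{1})$ and $\theta b + \theta' c \equiv 0 \pmod{2}$ from the bilinear form, while $q(\gamma e_{2}) = q(e_{2})$ becomes vacuous. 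Since $\theta, \theta'$ are odd these give $a = b = c$ in $\Z/2\Z$ with $d$ free, hence four kernel elements; squaring out $\gamma$ (again using $l \geq 3$) shows each has order $2$, identifying the kernel with $(\Z/2\Z)^{2}$.

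For part $(3)$, I would write $\gamma = \begin{pmatrix} 1 + p^{l-1}a & p^{l-1}b \\ c & d \end{pmatrix}$ on $\Z/p^{l}\Z \oplus \Z/p\Z$, working mod $\Z$ since $p \neq 2$. The three bilinear relations simplify to
\begin{equation*}
2\theta a + \theta' c^{2} \equiv 0, \quad \theta'(d^{2}-1) \equiv 0, \quad \theta b + \theta' c d \equiv 0 \pmod{p}.
\end{equation*}
The first and third uniquely determine $a$ and $b$ from $(c,d)$ because $p \neq 2$, while the second forces $d = \pm 1$. Thus the kernel has exactly $2p$ elements, parameterized by $(c, d) \in \Z/p\Z \times \{\pm 1\}$. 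To identify the group structure, I would multiply two elements $\gamma(c_{1},1)\gamma(c_{2},1)$ and observe that, in the bottom row, the $p^{l-1}$-factors vanish mod $p$, so the product lies in the $d=1$ family with parameter $c_{1}+c_{2}$; this shows the subgroup $\{\gamma(c,1)\}$ is cyclic of order $p$. A similar computation shows that conjugation by $\gamma(0,-1) = \operatorname{diag}(1,-1)$ sends $c \mapsto -c$, producing the dihedral presentation.

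The main obstacle I anticipate is the structural identification in part $(3)$: element counting alone does not distinguish $D_{p}$ from $\Z/2p\Z$, so the matrix multiplication verifying inversion-by-conjugation must be done cleanly, tracking which cross terms die mod $p$ in the bottom row versus mod $p^{l}$ in the top. Parts $(1)$ and $(2)$ are, by contrast, essentially mechanical once the $2$-adic cancellation pattern $x(x+1) \in 2\Z$ is isolated.
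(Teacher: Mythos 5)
Your proposal is correct and follows essentially the same route as the paper: for part $(3)$ the paper uses the identical matrix parameterization of kernel elements, derives the same relations $d^{2}=1$, $b=-\theta''cd$, $a=-2^{-1}\theta''c^{2}$ with $\theta''=\theta^{-1}\theta'$, and exhibits the dihedral structure via generators $\eta,\sigma$ satisfying $\eta^{p}=\sigma^{2}=(\eta\sigma)^{2}=1$, which is equivalent to your cyclic-subgroup-plus-inverting-conjugation argument. Parts $(1)$ and $(2)$ are left to the reader in the paper as ``analogous,'' and your explicit $2$-adic computations (the $x(x+1)\in 2\Z$ cancellation, the vanishing of the $2^{l-2}$ terms, and the order-two check) correctly carry out that same template.
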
   
        
\begin{proof}  
We prove only the assertion $(3)$.  
An isometry    
$\gamma \in O(B_{p, l, 1}^{\theta , \theta '})$ 
contained in the kernel is represented as 
\begin{equation*}  
\gamma =         \begin{pmatrix}  1+ p^{l-1}a             &       p^{l-1}b           \\
                                                                           c            &             d       \end{pmatrix} ,  \: \: \: 
						a, b, c, d \in {\Z}/p{\Z}.  
\end{equation*}			 
By the isometry condition we have 
\begin{equation*}  
d^{2}=1, \; \; \;      
b= -\theta '' dc, \; \; \;     
a= -2^{-1}\theta '' c^{2},  
\end{equation*}   
where     
$\theta '' = \theta ^{-1} \theta '$.  
So there are ambiguities of  
$d \in \{  \pm  {\rm id\/}  \} $  
and    
$c \in {\Z}/p{\Z}$.   
It follows that the kernel is generated by     
\begin{equation*}     
\eta =      \begin{pmatrix}  1-2^{-1}\theta '' p^{l-1}         &      -\theta ''p^{l-1}               \\
                                                       1                                             &                      1                      \end{pmatrix}  
\: \: \: 		{\rm and\/}			   \: \: \: 														
\sigma   =      \begin{pmatrix}           1        &      0               \\
                                                                    0       &       -1                \end{pmatrix}  
\end{equation*}
with the relations 
$\eta ^{p} = \sigma ^{2} = (\eta \sigma)^{2} = 1$.  
\end{proof}

\begin{lemma}\label{initial values}
Let 
$\chi _{p}(a) = \left( \frac{a}{p} \right) $ 
be the Legendre symbol for $p\not= 2$. 																	

$(1)$ For $p \not= 2$,  
            $O(A_{p, 1}^{\theta , \theta '})$ 
         	is the dihedral group of order $2(p-\chi _{p}(-\theta \theta '))$. 
			 
$(2)$  We have 
\begin{equation*} 
                    O(A_{2, 1}^{\theta , \theta '})  \simeq 
                        \left\{ \begin{array}{cl} 
                                                 {\Z}/2{\Z},           &    \:  {\rm if\/} \: \:  \:  \theta \theta ' \equiv    1   \mod{4} , \\ 
                                                     \{ 1 \},               &     \:  {\rm if\/} \: \:  \:  \theta \theta ' \equiv   -1  \mod{4} . 
                                                \end{array} \right. 
             \end{equation*}

$(3)$  We have 
\begin{equation*} 
                    O(B_{2, l, 1}^{\theta , \theta '})  \simeq 
                        \left\{ \begin{array}{cl} 
                                                 {\Z}/2{\Z},                             &   \:   {\rm if\/} \: \:  \:  l=2,3 , \\ 
                                {\Z}/2{\Z} \oplus {\Z}/2{\Z} ,         &   \:   {\rm if\/} \: \:  \:  l\geq 4 . 
                                                \end{array} \right. 
             \end{equation*}

$(4)$  $O(V_{1})$ is the symmetric group $\frak{S} _{3}$.

$(5)$  $O(U_{1}) \simeq {\Z}/2{\Z}$.  
\end{lemma}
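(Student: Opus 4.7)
The plan is to dispatch each assertion by a case-specific direct computation, split according to whether the underlying prime is odd (item (1)) or equal to $2$ (items (2)--(5)). For (1), I would identify $A_{p,1}^{\theta,\theta'}$, after rescaling by $p$, with the non-degenerate diagonal binary quadratic form $\theta x^2 + \theta' y^2$ on the ${\mathbb F}_p$-vector space $({\Z}/p{\Z})^2$; then $O(A_{p,1}^{\theta,\theta'})$ coincides with the classical orthogonal group $O_2({\mathbb F}_p)$, which is dihedral, with cyclic rotation subgroup of order $p-1$ in the isotropic (hyperbolic plane) case and $p+1$ in the anisotropic case, extended by a reflection. Since a diagonal form is isotropic over ${\mathbb F}_p$ iff $-\theta\theta'$ is a square, i.e.\ $\chi_p(-\theta\theta') = 1$, the two cases unify to order $2(p - \chi_p(-\theta\theta'))$.

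For (2), (4), and (5) the underlying group is $({\Z}/2{\Z})^2$, so $\mathrm{Aut}(({\Z}/2{\Z})^2) = GL_2({\mathbb F}_2) \simeq \mathfrak{S}_3$ has only six elements, and I would tabulate $q$ on the three nonzero vectors to determine which permutations give isometries. For $A_{2,1}^{\theta,\theta'}$ the three transvections fail immediately because they shift $q$ on a basis vector by a nonzero amount in $\Q/2\Z$; only the identity and the swap $e_1 \leftrightarrow e_2$ remain, and the swap is an isometry iff $q(e_1) = q(e_2)$ in $\Q/2\Z$, i.e.\ $\theta \equiv \theta' \pmod 4$, equivalently $\theta\theta' \equiv 1 \pmod 4$. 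For $V_1$ a direct computation shows $q(e_1) = q(e_2) = q(e_1 + e_2)$ in $\Q/2\Z$ and that the pairwise $b$-values coincide, so every permutation in $\mathfrak{S}_3$ is an isometry. For $U_1$ the value $q(e_1 + e_2) = 1/2$ is distinguished from $q(e_1) = q(e_2) = 0$, forcing any isometry to fix $e_1 + e_2$ and hence to be at most the swap of $\{e_1, e_2\}$.

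For (3), which I expect to be the main obstacle, I would parameterize a group automorphism of ${\Z}/2^l{\Z} \oplus {\Z}/2{\Z}$ as a matrix $\begin{pmatrix} \alpha & c \\ \beta & 1 \end{pmatrix}$ with $\alpha \in ({\Z}/2^l{\Z})^{\times}$, $c \in \{0, 2^{l-1}\}$ (the $2$-torsion of ${\Z}/2^l{\Z}$), and $\beta \in {\Z}/2{\Z}$, and then impose the three conditions that $q(\gamma e_1)$, $q(\gamma e_2)$, and $b(\gamma e_1, \gamma e_2)$ be preserved. These reduce to explicit congruences, the decisive one having the shape $(\alpha^2 - 1)\theta + 2^{l-1}\beta\theta' \equiv 0 \pmod{2^{l+1}}$, whose set of solutions $(\alpha, \beta)$ depends sharply on $l$ through the $2$-adic divisibility of $\alpha^2 - 1$. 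For small $l$ only $\alpha = \pm 1$ with $\beta = 0$ is admissible, while for $l \geq 4$ additional solutions come from the order-$2$ elements $\alpha = 2^{l-1} \pm 1$ of $({\Z}/2^l{\Z})^{\times}$ paired with a compatible $\beta$, producing the jump from ${\Z}/2{\Z}$ to $({\Z}/2{\Z})^2$. The main technical hurdle is the careful $2$-adic bookkeeping and the interplay between $\alpha$, $\beta$, and $c$ across the three isometry conditions.
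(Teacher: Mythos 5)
Your overall strategy coincides with the paper's: part (1) is the classical structure of $O_2(\mathbb{F}_p)$ for the form $\theta x^2+\theta' y^2$ (the paper simply cites Taylor), parts (2), (4), (5) are finite tabulations inside $GL_2(\mathbb{F}_2)\simeq\mathfrak{S}_3$ (the paper calls them straightforward; note the tiny slip that for $U_1$ one has $q(e_1+e_2)=1$, not $1/2$, in $\Q/2\Z$ --- harmless, since all that matters is that it differs from $q(e_1)=q(e_2)=0$), and for part (3) your parameterization and your decisive congruence $(\alpha^2-1)\theta+2^{l-1}\beta\theta'\equiv 0 \pmod{2^{l+1}}$ are exactly the paper's conditions.

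However, in part (3) your identification of the extra solutions for $l\geq 4$ is wrong, and as stated that step fails. If $\alpha=2^{l-1}\pm 1$ (the order-$2$ units of $(\Z/2^l\Z)^{\times}$), then $\alpha^2-1\equiv 2^{l}\pmod{2^{l+1}}$, so the decisive congruence becomes $2\theta+\beta\theta'\equiv 0\pmod 4$, which has no solution with $\beta\in\{0,1\}$ since $\theta,\theta'$ are odd; these $\alpha$ are never isometries, for any $l$. The genuine extra solutions are the square roots of $1-2^{l-1}\theta''$ modulo $2^{l+1}$, where $\theta''=\theta^{-1}\theta'$: an odd residue is a square modulo $2^{l+1}$ exactly when it is $\equiv 1\pmod 8$, which for $1-2^{l-1}\theta''$ happens precisely when $l\geq 4$ (this is the correct reason for the jump at $l=4$), and the roots are $\alpha\equiv\pm(1+4\theta'')$ for $l=4$ and $\alpha\equiv\pm(1-2^{l-2}\theta'')$ for $l\geq 5$, paired with $\beta=1$ and $c=2^{l-1}$, as in the paper. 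Note these $\alpha$ have order $4$, not $2$, in $(\Z/2^l\Z)^{\times}$, so your route to the isomorphism type $\Z/2\Z\oplus\Z/2\Z$ also breaks: one must check instead that the full $2\times 2$ isometries with $\beta=1$ square to the identity (they do, because $\alpha^2+2^{l-1}\equiv 1 \pmod{2^l}$ and $\alpha+1$ is even), which is the missing verification behind the stated group structure for $l\geq 4$.
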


\begin{proof}
See Theorem 11.4 of \cite{Ta} for the assertion $(1)$.  
The verifications of the assertions $(2)$, $(4)$, $(5)$ are straightforward.  
We prove $(3)$.  
Let 
$\gamma =       \begin{pmatrix}         a       &      2^{l-1}b            \\
                                                                       c       &            d                    \end{pmatrix} 
\in  O(B_{2, l, 1}^{\theta , \theta '})$,  
where 
$a\in {\Z}/2^{l}{\Z}$ 
and 
$b, c, d \in {\Z}/2{\Z}$.  
If we denote 
$\theta '' := \theta ^{-1} \theta '$,  
the isometry condition for $\gamma $ is the following equations : 
\begin{eqnarray}
a^{2} + c^{2}\theta ''2^{l-1}            \equiv           1                   &    &     \mod{2^{l+1}},        \label{congruence 1}\\  
b^{2}2^{l-1} + d^{2}\theta ''             \equiv       \theta ''        &    &     \mod{4},                      \label{congruence 2}\\  
ab + cd \theta ''                                      \equiv         0                     &    &     \mod{2}.                      \label{congruence 3}
\end{eqnarray}
When $l=2$, 
we see that 
$a=\pm 1$, $b=c=0$, and $d=1$.  
When $l\geq 3$,  
we have 
$d=1$ by $(\ref{congruence 2})$  and 
$b=c$ by $(\ref{congruence 3})$. 
There are two possibilities for $c$ : $0$ or $1$.  
If $c=0$,  
then $a = \pm 1$ satisfy the equation $(\ref{congruence 1})$.  
If $c=1$, 
then the equation $(\ref{congruence 1})$ is written as 
\begin{equation}\label{congruence 1'}
a^{2} = 1-2^{l-1}\theta ''  \mod{2^{l+1}} . 
\end{equation}
When $l=3$,  
$(\ref{congruence 1'})$ does not have solution.  
When $l=4$,  
$(\ref{congruence 1'})$ has solutions $a= \pm (1+4\theta '')$. 
When $l\geq 5$, 
$(\ref{congruence 1'})$ has solutions $a= \pm (1-2^{l-2}\theta '')$. 
\end{proof}

From Lemmas \ref{kernel of reduction general}, \ref{kernel of reduction interpolate}, and \ref{initial values} 
we obtain the following results.

\begin{prop}\label{local result odd} 
Let $p\not= 2$ and $k\geq 1$.  

$(1)$   We have $| O(A_{p, k}^{\theta , \theta '})  |    = 2  \cdot p^{k-1}  \cdot (p-  \chi _{p}(-\theta \theta '))$.  

$(2)$   We have  $| O(B_{p, l, k}^{\theta , \theta '})  |  =  4 \cdot p^{k}$.  
\end{prop}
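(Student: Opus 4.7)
The plan is to establish both formulae by induction on $k$, using the reduction homomorphisms of Proposition \ref{Hensel lemma} together with the kernel computations of Lemmas \ref{kernel of reduction general} and \ref{kernel of reduction interpolate}. Since $p$ is odd, every element in a $p$-group has order at least $p \geq 3$, so the hypothesis ``no direct summand of order $2$'' is automatically satisfied and the reduction maps are surjective throughout.

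For (1), the base case $k=1$ is exactly Lemma \ref{initial values}(1), which gives $|O(A_{p,1}^{\theta,\theta'})| = 2(p - \chi_p(-\theta\theta'))$. For $k\geq 2$, the surjective reduction $O(A_{p,k}^{\theta,\theta'}) \twoheadrightarrow O(A_{p,k-1}^{\theta,\theta'})$ has kernel of order $p$ by Lemma \ref{kernel of reduction general}(1), so $|O(A_{p,k}^{\theta,\theta'})| = p\cdot |O(A_{p,k-1}^{\theta,\theta'})|$. Iterating down to $k=1$ yields $2p^{k-1}(p - \chi_p(-\theta\theta'))$, as required.

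For (2), I iterate Lemma \ref{kernel of reduction general}(2) a total of $k-1$ times to reduce to the case where the second index equals $1$, obtaining
\[
|O(B_{p,l,k}^{\theta,\theta'})| \; = \; p^{k-1}\cdot |O(B_{p,l-k+1,1}^{\theta,\theta'})|.
\]
For this remaining case, the surjective reduction $O(B_{p,l-k+1,1}^{\theta,\theta'}) \twoheadrightarrow O(B_{p,l-k,0}^{\theta,\theta'})$ has kernel the dihedral group of order $2p$ by Lemma \ref{kernel of reduction interpolate}(3). Because $l>k$ we have $l-k\geq 1$, so the target is a nondegenerate cyclic quadratic form on ${\Z}/p^{l-k}{\Z}$; its isometry group consists of multiplications by $u$ with $u^{2}\equiv 1 \pmod{p^{l-k}}$, which for odd $p$ forces $u=\pm 1$. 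Hence $|O(B_{p,l-k+1,1}^{\theta,\theta'})| = 2p\cdot 2 = 4p$, and therefore $|O(B_{p,l,k}^{\theta,\theta'})| = 4p^{k}$.

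The proof is essentially a telescoping product; no serious obstacle arises once the kernels and the base cases are in hand. The only point deserving care is ensuring that the surjectivity of Proposition \ref{Hensel lemma} applies at every step of the reduction chain, which in the odd $p$ case is automatic as noted above.
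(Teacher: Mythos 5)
Your proof is correct and follows essentially the same route the paper intends: it combines the surjectivity of the reduction maps (Proposition \ref{Hensel lemma}) with the kernel computations of Lemmas \ref{kernel of reduction general} and \ref{kernel of reduction interpolate} and the base cases of Lemma \ref{initial values}, telescoping down to $k=1$ and, for the $B$-forms, to the cyclic form whose isometry group is $\{\pm 1\}$. The paper gives no further detail beyond citing those three lemmas, so your write-up simply makes the intended argument explicit.
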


\begin{prop}\label{local result 2}
We have the following equalities.  

$(1)$ 
\begin{equation*}
| O(A_{2, k}^{\theta , \theta '})  | = 
                                      \left\{ \begin{array}{cc} 
                                                 2^{k},                   &     -\theta \theta ' \equiv  1   \mod{4} ,      \: k\geq 2    \\ 
                                                 2^{k+1},              &     -\theta \theta ' \equiv  -1   \mod{4} ,    \: k\geq 2    \\ 
												 1,                           &      -\theta \theta ' \equiv  1   \mod{4} ,      \: k=1           \\ 
												 2,                           &       -\theta \theta ' \equiv  -1   \mod{4} ,      \: k=1            
                                                \end{array} \right. 
\end{equation*}

$(2)$  
\begin{equation*}
| O(B_{2, l, k}^{\theta , \theta '})  | = 
                                      \left\{ \begin{array}{cc} 
                                                 2^{k+1},              &     l-k \leq 2,      \: k\geq 2    \\ 
                                                 2^{k+2},              &     l-k \geq 3,    \: k\geq 2    \\ 
												 2,                           &      l-k \leq 2,      \: k=1           \\ 
												 4,                           &      l-k \geq 3,      \: k=1            
                                                \end{array} \right. 
\end{equation*}

$(3)$ $| O(V_{k}) |  =  2^{k} \cdot 3$. 

$(4)$  $| O(U_{k}) |  =  2^{k}$. 
\end{prop}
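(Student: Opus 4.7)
The plan is to induct on the ``size'' parameter $k$ using the surjective reduction homomorphism $\kappa : O(D) \twoheadrightarrow O(\overline{D})$ of Proposition \ref{Hensel lemma}, with base cases supplied by Lemma \ref{initial values} and kernel sizes supplied by Lemmas \ref{kernel of reduction general} and \ref{kernel of reduction interpolate}. The only subtlety compared with the odd-prime case (Proposition \ref{local result odd}) is that for $p=2$ the reduction $O(D_{p,k}) \to O(D_{p,k-1})$ has kernel of order $2$ only once $k\geq 3$, while the single transition $k{=}2 \to k{=}1$ has an enlarged kernel of order $4$; accordingly the recursion must be started at $k=2$ and the known value at $k=1$ fed in.

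For $(3)$ and $(4)$, the argument is the cleanest: by Lemma \ref{kernel of reduction general}(3),(4) each reduction $O(V_k)\to O(V_{k-1})$ and $O(U_k)\to O(U_{k-1})$ has kernel $\Z/2\Z$ for $k\geq 2$, and Lemma \ref{initial values}(4),(5) gives $|O(V_1)|=|\mathfrak{S}_3|=6$ and $|O(U_1)|=2$. A one-line induction then yields $|O(V_k)|=2^{k-1}\cdot 6=2^k\cdot 3$ and $|O(U_k)|=2^{k-1}\cdot 2=2^k$.

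For $(1)$, I start from Lemma \ref{initial values}(2): $|O(A_{2,1}^{\theta,\theta'})|$ is $1$ or $2$ according to whether $\theta\theta'\equiv -1$ or $1\pmod 4$, equivalently $-\theta\theta'\equiv 1$ or $-1\pmod 4$. Pass from $k=1$ to $k=2$ via Lemma \ref{kernel of reduction interpolate}(1), whose kernel has order $4$: this yields $|O(A_{2,2})|=4$ or $8$, exactly $2^{k}$ or $2^{k+1}$ as claimed. For $k\geq 3$, Lemma \ref{kernel of reduction general}(1) gives a kernel of order $2$, and induction multiplies by $2$ at each step, matching the stated formula. Assertion $(2)$ is handled the same way: Lemma \ref{initial values}(3) gives $|O(B_{2,l,1}^{\theta,\theta'})|=2$ or $4$ according to whether $l\leq 3$ or $l\geq 4$; going to $k=2$ uses Lemma \ref{kernel of reduction interpolate}(2) (kernel of order $4$), and the cases $l-k\leq 2$ vs.\ $l-k\geq 3$ translate to $l-1\leq 3$ vs.\ $l-1\geq 4$, matching the base values; for $k\geq 3$ Lemma \ref{kernel of reduction general}(2) contributes a factor of $2$ at each reduction, preserving the dichotomy in $l-k$ (which is invariant under simultaneous reduction of $l$ and $k$).

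The main obstacle is bookkeeping at the $k=2\to k=1$ step: one must check that the enlarged kernel $(\Z/2\Z)^{\oplus 2}$ from Lemma \ref{kernel of reduction interpolate} together with the base value from Lemma \ref{initial values} correctly synthesizes both case distinctions (on $\theta\theta' \bmod 4$ for $(1)$, and on $l-k$ for $(2)$); once this is verified, the higher $k$ follow mechanically by Lemma \ref{kernel of reduction general}.
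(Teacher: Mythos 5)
Your proof is correct and is exactly the argument the paper intends: the paper derives Proposition \ref{local result 2} directly "from Lemmas \ref{kernel of reduction general}, \ref{kernel of reduction interpolate}, and \ref{initial values}", i.e.\ the same induction on $k$ via the surjective reductions of Proposition \ref{Hensel lemma}, with the order-$4$ kernel at the single step $k=2\to k=1$ for the $A$- and $B$-forms and order-$2$ kernels thereafter (and throughout for $U_k$, $V_k$). Your bookkeeping of the case distinctions ($-\theta\theta' \bmod 4$, resp.\ $l-k$, which is preserved under simultaneous reduction) matches the stated values.
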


\subsection{Global results}
Let $L$ be an even lattice of rank $2$.  
Denote by $(n, m)$ the invariant factor of $L \subset L^{\vee}$.  
That is, we have $n|m$ and 
there is a basis $\{ v_{1}, v_{2} \} $ of $L^{\vee}$ 
such that $L = \langle nv_{1}, mv_{2} \rangle $.  
Let 
\begin{eqnarray*}\label{prime decomp of inv factor}
n  &  =  &  p_{1}^{e_{1}} \cdots p_{\alpha}^{e_{\alpha}} \cdot   q_{1}^{f_{1}} \cdots q_{\beta}^{f_{\beta}},  \\ 
m &  =  &   p_{1}^{e_{1}} \cdots p_{\alpha}^{e_{\alpha}} \cdot   q_{1}^{f_{1}'} \cdots q_{\beta}^{f_{\beta}'} 
                      \cdot r_{1}^{g_{1}} \cdots r_{\gamma}^{g_{\gamma}},   
					  \: \: \: \:  f_{i}'>f_{i},  
\end{eqnarray*}
be the prime decompositions of $n$ and $m$.  
As groups, 
the $p$-components $D_{p}$ of the discriminant group $D_{L}$ are as follows : 
\begin{equation*}\label{p-comp of disc form and inv factor}
D_{p_{i}} \simeq ({\Z}/p_{i}^{e_{i}}{\Z})^{2},                                                         \: \: \: \: 
D_{q_{i}} \simeq  {\Z}/q_{i}^{f_{i}}{\Z} \oplus {\Z}/q_{i}^{f_{i}'}{\Z},          \: \: \: \: 
D_{r_{i}} \simeq  {\Z}/r_{i}^{g_{i}}{\Z}.  
\end{equation*} 
For a prime number $p$ dividing $n$, put 
\begin{equation*}\label{certain units}
\varepsilon _{p} :=  
                                      \left\{ \begin{array}{cc} 
                                                 -p^{-2e_{i}}\cdot {\rm det\/}(L),                  &      p=p_{i}. \\ 
                                                 0,                                                                               &      p=q_{i}. 
                                                \end{array} \right. 
\end{equation*}
When 
$D_{p} \simeq A_{p, e}^{\theta , \theta '}$ as a quadratic form, 
we have 
$-\theta \theta ' \equiv \varepsilon _{p}  \in  {\Z}_{p}^{\times}/({\Z}_{p}^{\times})^{2}$.  
When $D_{2} \simeq U_{e}$ (resp. $V_{e}$),  
we have 
$\varepsilon _{2} \equiv 1$ (resp. $5$) $\mod{8}$.

For an odd prime number $p$ and $\varepsilon \in \mathbb{F}_{p}^{\times}$, 
let 
$\chi _{p} (\varepsilon)= \left(   \frac{\varepsilon}{p}\right) $ be the Legendre symbol.  
We put 
$\chi _{p} (0) := 0$.   
For a natural number $n \equiv 1 \mod{4}$,  
we define 
\begin{equation*}
\chi _{2}(n) =  \left\{ \begin{array}{cl} 
				      1,                 &   \: \: \:   n \equiv 1 \mod{8},			\\ 
                                           -1,                &   \:  \: \:  n \equiv 5 \mod{8}.      			
                                \end{array} \right. 
\end{equation*}   
For a natural number $N$ 
let $\widetilde{\tau}(N) := \tau (N)$ if $N>1$ 
and $\widetilde{\tau}(1) := 0$. 
Then we have 
\begin{equation*}
\widetilde{\tau} (n) + \widetilde{\tau} (n^{-1}m) = \alpha + 2\beta + \gamma .
\end{equation*}


We are now in a position to express the formula for $|O(D_{L})|$.  

\begin{theorem}\label{order of isom grp of disc form}
Let $L, (n, m), \chi _{p}$, and $\widetilde{\tau}$ be as above. 

$(1)$ 
If $D_{2}$ is either trivial or $U_{k}$ or $V_{k}$,  
or equivalently if $L\simeq M(2^{e})$ for an even lattice $M$ with ${\rm det\/}(M)$ odd, 
then 
\begin{equation*}
|O(D_{L})| = 
2^{\widetilde{\tau} (n) + \widetilde{\tau}(n^{-1}m)} \cdot n \cdot  \mathop{\prod}_{p|n} \Bigl(  1- \frac{\chi _{p}(\varepsilon _{p})}{p} \Bigr) .  
\end{equation*}

$(2)$ 
If $D_{2}\simeq A_{2, k}^{\theta , \theta '}$,  
or equivalently if $L\simeq M(2^{e})$ for an odd lattice $M$ with ${\rm det\/}(M)$ odd, 
then  
\begin{equation*}
|O(D_{L})| = 
C \cdot 2^{\widetilde{\tau} (2^{-1}n) + \widetilde{\tau} (n^{-1}m)} \cdot n \cdot  
\mathop{\prod}_{{p|n}\atop{p\not= 2}} \Bigl(  1- \frac{\chi _{p}(\varepsilon _{p})}{p} \Bigr) ,   
\end{equation*}
where $C=1$ if $\varepsilon _{2} \equiv -1 \mod{4}$, 
$C=\frac{1}{2}$ if $\varepsilon _{2} \equiv 1 \mod{4}$.

$(3)$ 
If $D_{2}\simeq B_{2, l, k}^{\theta , \theta '}$,  
or equivalently if $L\simeq M(2^{e})$ for an odd lattice $M$ with ${\rm det\/}(M)$ even,  
then  
\begin{equation*}
|O(D_{L})| = 
C \cdot 2^{\widetilde{\tau} (2^{-1}n) +\widetilde{\tau} (n^{-1}m)} \cdot n \cdot  
\mathop{\prod}_{{p|n}\atop{p\not= 2}} \Bigl(  1- \frac{\chi _{p}(\varepsilon _{p})}{p} \Bigr) ,   
\end{equation*}
where $C=1$ if $l-k\geq 3$, 
$C=\frac{1}{2}$ if $l-k \leq 2$.
\end{theorem}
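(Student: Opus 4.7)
The plan is to use the orthogonal decomposition $O(D_L)=\bigoplus_p O(D_p)$ from (\ref{sect.7 decomp of isometry group}) and compute each local factor via the results of Section \ref{subsection 7.1}. First, from the invariant factor data $(n,m)$ I read off the type of each $p$-component: $D_{p_i}$ has underlying group $(\Z/p_i^{e_i}\Z)^2$ and is therefore a form of type $A_{p_i,e_i}^{\theta,\theta'}$, $D_{q_j}$ is of type $B_{q_j,f_j',f_j}^{\theta,\theta'}$, and $D_{r_k}$ is rank $1$ cyclic of order $r_k^{g_k}$. At the prime $2$, the $2$-adic Jordan decomposition of a rank $2$ even lattice shows that $L\otimes\Z_2$ is one of $U(2^e)$, $V(2^e)$, $\langle 2^e u_1\rangle\oplus\langle 2^e u_2\rangle$, $\langle 2^e u_1\rangle\oplus\langle 2^{e+a}u_2\rangle$, giving exactly the three cases of the theorem and realizing $L\simeq M(2^e)$ for the respective type of $M$. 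Since $L$ is even, $2$ cannot occur as one of the $r_k$'s, so every rank $1$ cyclic summand is at an odd prime.

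Second, for each cyclic summand $D_{r_k}$ at an odd prime, any isometry must send a generator to its $\pm$-image, so $|O(D_{r_k})|=2$. Together with Propositions \ref{local result odd} and \ref{local result 2}, this gives the order of every local factor. Taking the product over odd primes, each equal-elementary $A$-piece contributes a factor of $2$, each $B$-piece contributes $4$, and each cyclic piece contributes $2$, for a combined $2$-power $2^{\alpha+2\beta+\gamma}=2^{\widetilde{\tau}(n)+\widetilde{\tau}(n^{-1}m)}$ (using the identity stated before the theorem). The remaining $p$-power part produces $n_{\rm odd}\cdot\prod_{p|n,\,p\neq 2}(1-\chi_p(\varepsilon_p)/p)$, where $n_{\rm odd}$ is the odd part of $n$.

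Third, multiply in the $2$-local contribution. In case (1), the identities $|O(U_k)|=2^k$ and $|O(V_k)|=3\cdot 2^k$ are rewritten as $2^k\bigl(1-\chi_2(\varepsilon_2)/2\bigr)\cdot 2$; the extra factor $2$ promotes $n_{\rm odd}\cdot 2^{k-1}$ to the full $n$, while the $(1-\chi_2(\varepsilon_2)/2)$ is absorbed into the product $\prod_{p|n}(1-\chi_p(\varepsilon_p)/p)$. In cases (2) and (3) the $2$-local orders given by Proposition \ref{local result 2} contain an explicit constant ($2^k$ vs.\ $2^{k+1}$, or $2^{k+1}$ vs.\ $2^{k+2}$); after absorbing the factor $n/(2n_{\rm odd})$ into $n$, the leftover is exactly $C\in\{1,1/2\}$, and the shift from $\widetilde{\tau}(n)$ to $\widetilde{\tau}(2^{-1}n)$ accounts for the fact that the $2$-local contribution is no longer being paired with a Legendre factor at $2$.

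The main obstacle is purely combinatorial bookkeeping: tracking the powers of $2$ through the three cases on $D_2$ while correctly handling the small-$k$ exceptional values appearing in Proposition \ref{local result 2} (e.g.\ the values at $k=1$ or $l-k\leq 2$), and verifying in each sub-case that the constant $C$ and the shift $\widetilde{\tau}(n)\rightsquigarrow\widetilde{\tau}(2^{-1}n)$ conspire to reproduce the compact formula. The identification of the type of $D_2$ with the structural description $L\simeq M(2^e)$ is a direct consequence of the classification of rank $2$ even $\Z_2$-lattices and needs only to be recorded case by case.
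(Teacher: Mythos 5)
Your proposal is correct and follows essentially the same route as the paper: the paper's own proof is simply to combine the local orders from Section \ref{subsection 7.1} (Propositions \ref{local result odd} and \ref{local result 2}, plus the trivial order $2$ for odd cyclic components) via the decomposition $(\ref{sect.7 decomp of isometry group})$, noting that $D_2$ is never cyclic for an even lattice — exactly the bookkeeping you describe, including the role of $C$ and the shift to $\widetilde{\tau}(2^{-1}n)$ in the small-$k$ cases. Apart from a slightly garbled sentence about which factor of $2$ is absorbed where in case (1), the arithmetic identities you state check out.
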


\begin{proof}  
This follows immediately from the results of the previous section $\ref{subsection 7.1}$.      
Note that $D_{2}$ is never cyclic.                  
\end{proof}

From section $\ref{subsection 7.1}$ 							  
we also deduce the following.

\begin{prop}\label{multiple-divisible}
Let $L$ be a rank $2$ even lattice and $n>1$ be a natural number.  

$(1)$   
The number $| O(D_{L(n)}) |$ is divisible by the number $| O(D_{L}) |$.

$(2)$ 
If $n$ is coprime to ${\rm det\/}(L)$, 
we have 
\begin{equation*}\label{eqn: multiple-divisibility coprime case}
| O(D_{L(n)}) |  =  
| O(D_{L}) | \cdot 
2^{\tau (n)} \cdot n \cdot \mathop{\prod}_{p|n} \left(  1 - \frac{\chi _{p}(-{\rm det\/}(L))}{p}    \right) .  
\end{equation*} 

$(3)$  
If $L$ is primitive and $n|{\rm det\/}(L)^{a}$ for some $a \in {\Z}_{>0}$, 
we have 
\begin{equation*}\label{eqn: multiple-divisibility totally non-coprime case}
| O(D_{L(n)}) |  =  
| O(D_{L}) | \cdot 2^{\tau (n)} \cdot n.  
\end{equation*} 
\end{prop}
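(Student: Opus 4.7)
The strategy is purely local, based on the orthogonal decomposition $|O(D_L)| = \prod_p |O(D_{L,p})|$ from equation (\ref{sect.7 decomp of isometry group}) and the classification of finite quadratic forms in Section \ref{subsection 7.1}. For any prime $p \nmid n$, the scalar $n$ is a $p$-adic unit, so $L(n)\otimes {\Z}_p$ differs from $L\otimes{\Z}_p$ only by rescaling by a unit; consequently $|O(D_{L(n),p})| = |O(D_{L,p})|$, and everything reduces to comparing the two sides at the primes $p \mid n$. The key observation is that if $D_{L,p}$ is of type $A_{p,e}^{\theta,\theta'}$, $B_{p,l,k}^{\theta,\theta'}$, $U_e$, or $V_e$, then $D_{L(n),p}$ is of the same type with every level index increased by $v_p(n)$ and with characters $\theta,\theta'$ multiplied by $u := n/p^{v_p(n)} \in {\Z}_p^\times$; in particular $\chi_p(-\theta\theta')$ is unchanged because $u^2$ is a square in ${\Z}_p^\times$.

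For part $(1)$, direct inspection of Propositions \ref{local result odd} and \ref{local result 2} case by case shows that $|O(D_{L(n),p})|/|O(D_{L,p})|$ is a positive integer, hence $|O(D_L)|$ divides $|O(D_{L(n)})|$. For part $(2)$, coprimality of $n$ with $\det(L)$ forces $D_{L,p}$ to be trivial for every $p \mid n$, and $D_{L(n),p} \simeq A_{p,v_p(n)}^{\theta,\theta'}$ with $-\theta\theta' \equiv -\det(L)$ modulo $({\Z}_p^\times)^2$. Proposition \ref{local result odd} then yields $|O(D_{L(n),p})| = 2\,p^{v_p(n)-1}(p - \chi_p(-\det L))$ for odd $p$, while for $p=2$ the local form $L \otimes {\Z}_2$ is unimodular of type $U$ or $V$ (since $2 \nmid \det L$), so $D_{L(n),2} \simeq U_{v_2(n)}$ or $V_{v_2(n)}$ with order $2^{v_2(n)}$ or $3 \cdot 2^{v_2(n)}$, matching the factor $1 - \chi_2(-\det L)/2$ (well-defined since $-\det L \equiv 1 \bmod 4$ in both cases). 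Multiplying over $p \mid n$ and using $\prod_{p\mid n} p^{v_p(n)} = n$ produces the stated formula.

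For part $(3)$, primitivity of $L$ sharply restricts the local structure. For odd $p \mid \det(L)$, diagonalization of $L \otimes {\Z}_p$ combined with the content-one condition forces $D_{L,p}$ to be cyclic ${\Z}/p^l{\Z}$ with $l \geq 1$, since any $A_{p,e}^{\theta,\theta'}$ or $B_{p,l,k}^{\theta,\theta'}$ summand with minimal level $\geq 1$ would push $p$ into the content of the Gram matrix. For $p=2$, primitivity similarly rules out $U_e, V_e$ with $e \geq 1$ and $A_{2,e}^{\theta,\theta'}, B_{2,l,k}^{\theta,\theta'}$ with $\min(e,k) \geq 2$, leaving only $A_{2,1}^{\theta,\theta'}$ and $B_{2,l,1}^{\theta,\theta'}$ with $l \geq 2$. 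A case-by-case ratio computation via Propositions \ref{local result odd}, \ref{local result 2} and Lemma \ref{initial values} then shows $|O(D_{L(n),p})|/|O(D_{L,p})| = 2 \cdot p^{v_p(n)}$ in every case, and multiplying over $p \mid n$ yields the factor $2^{\tau(n)} \cdot n$.

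The main obstacle is the $p = 2$ case analysis in part $(3)$: one must distinguish $\theta\theta' \equiv 1$ versus $\equiv -1 \bmod 4$ for the $A_{2,1}^{\theta,\theta'}$ case and $l \leq 3$ versus $l \geq 4$ for the $B_{2,l,1}^{\theta,\theta'}$ case, and verify in each subcase that after shifting the levels by $v_2(n)$ the threshold in $|O(B_{2,\cdot,\cdot}^{\theta,\theta'})|$ (whether $l - k \leq 2$ or $\geq 3$) shifts in a perfectly compensating manner so that the uniform ratio $2^{v_2(n)+1}$ emerges. This is a finite but delicate bookkeeping exercise.
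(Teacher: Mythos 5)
Your proposal is correct, and it follows the same localization strategy as the paper: decompose $O(D_{L})$ into its $p$-components via $(\ref{sect.7 decomp of isometry group})$, note that the components at primes $p\nmid n$ are unchanged, and analyze the primes $p\mid n$ using the same primitivity analysis (cyclic odd components; $A_{2,1}^{\theta,\theta'}$ or $B_{2,l,1}^{\theta,\theta'}$ at $p=2$) that the paper uses. The difference is in which results of Section \ref{section 7} carry the weight. The paper deduces $(1)$ structurally from the surjectivity of the reduction homomorphisms (Proposition \ref{Hensel lemma}) and $(3)$ directly from the kernel computations (Lemmas \ref{kernel of reduction general} and \ref{kernel of reduction interpolate}): each scaling step contributes a factor $p$ except the first, which contributes $2p$ for odd $p$ (resp.\ a factor $4$ at $p=2$), so the per-prime ratio $2p^{v_{p}(n)}$ falls out at once, with no need for the ``compensating threshold'' bookkeeping you describe, since the kernels do not depend on whether $l-k\leq 2$ or $\geq 3$, nor on $\theta\theta' \bmod 4$. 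You instead compare the closed-form orders of Propositions \ref{local result odd}, \ref{local result 2} and Lemma \ref{initial values}; this works and your ratio computations are right (they reproduce $2p^{v_p(n)}$ in every subcase), but note that your stated ``key observation'' about types being preserved with shifted indices should explicitly include the trivial and cyclic local components, which are exactly the cases where the type of $D_{L,p}$ does change under scaling (trivial $\to A_{p,e}^{\theta,\theta'}$ or $U_{e}, V_{e}$; cyclic $\to B_{p,l+e,e}^{\theta,\theta'}$); these are the only cases relevant in $(2)$ and $(3)$, where you do treat them correctly, and for $(1)$ they are equally harmless since the corresponding ratios are again integers.
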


\begin{proof}
The assertion $(1)$ follows from Proposition \ref{Hensel lemma}.  

$(2)$ 
We identify the ${\Z}$-modules underlying $L$ and $L(n)$ in a natural way. 
Since $n$ is coprime to $| D_{L} |$, 
we have the orthogonal decomposition 
\begin{equation*}
D_{L(n)} = (L^{\vee}/L) \oplus (n^{-1}L/L). 
\end{equation*}
Hence we have 
\begin{equation*}
O(D_{L(n)}) = O(D_{L}) \oplus \mathop{\bigoplus}_{p|n} O(p^{-e}L/L),  
\end{equation*}
where $n=\prod p^{e}$ is the prime decomposition of $n$.  
If $p$ is odd, then 
$p^{-e}L/L \simeq A_{p, e}^{1, {\rm det\/}(L)}$.  
If $p=2$, 
we have 
$2^{-e}L/L \simeq U_{e} \; \text{or} \; V_{e}$ 
according to 
$-{\rm det\/}(L) \equiv 1 \; \text{or} \; 5 \mod{8}$.  

$(3)$ 
Let $D_{L} = \oplus _{p} D_{p}$ be the decomposition into $p$-components. 
For $p \not= 2$, $D_{p}$ is cyclic.  
On the other hand, $D_{2}$ is either trivial or $A_{2, 1}^{\theta , \theta '}$ or $B_{2, l, 1}^{\theta , \theta '}$.  
Thus our claim follows from Lemmas \ref{kernel of reduction general} and \ref{kernel of reduction interpolate}.  
\end{proof}

\noindent\textbf{Acknowledgements.}   
The author wishes to thank 
Professors Herbert Lange, Matthias Schuett,  Ichiro Shimada, Tetsuji Shioda       
for valuable comments.   
The contents of Sections 5.2 and 6.2 were improved considerably due to Professor Shioda's  advice. 
The author is specially grateful to Professor Ken-Ichi Yoshikawa for encouragements and advices.  
This work was supported by Grant-in-Aid for JSPS fellows.


\begin{thebibliography}{99} 


\bibitem{B-B} Baily, W. L., Jr.; Borel, A. 
 {\it Compactification of arithmetic quotients of bounded symmetric domains.\/} Ann. of Math. (2) \textbf{84} (1966) 442--528.

\bibitem{B-L} Birkenhake, C.; Lange, H.  
{\it Complex abelian varieties. \/} Second edition.  
Grundlehren der Mathematischen Wissenschaften, \textbf{302}. Springer, 2004.

\bibitem{Ca} Cassels, J. W. S. 
{\it Rational quadratic forms.\/} 
London Mathematical Society Monographs, \textbf{13}. Academic Press,  1978.

\bibitem{Co} Cox, D. A.
{\it Primes of the form $x\sp 2 + ny\sp 2$.\/} 
Wiley-Interscience, 1989.

\bibitem{Ha} Hayashida, T.  
{\it A class number associated with a product of two elliptic curves.\/}
Natur. Sci. Rep. Ochanomizu Univ. \textbf{16} (1965)  9?19.     


\bibitem{H-L-O-Y} Hosono, S.; Lian, B. H.; Oguiso, K.; Yau, S.-T.
{\it Fourier-Mukai number of a K3 surface.\/}  in "Algebraic structures and moduli spaces", 177--192, 
CRM Proc. Lecture Notes, \textbf{38}, Amer. Math. Soc., Providence, 2004. 


\bibitem{Kl} Kluit, P. G.
{\it On the normalizer of  $\Gamma \sb{0}(N)$.\/}  
Modular functions of one variable, V.  Lecture Notes in Math. \textbf{601}, 239--246.  Springer, 1977. 

\bibitem{La} Lange, H. {\it Principal polarizations on products of elliptic curves. \/} 
The geometry of Riemann surfaces and abelian varieties, 153--162, 
Contemp. Math., \textbf{397}, Amer. Math. Soc., Providence, 2006. 


\bibitem{L-N} Lehner, J.; Newman, M.
{\it Weierstrass points of $\Gamma \sb{0}\,(n)$.\/} 
Ann. of Math. (2) \textbf{79} (1964) 360--368. 


\bibitem{M-W} Montgomery, H. L.; Weinberger, P. J.
{\it Notes on small class numbers.\/} 
Acta Arith. \textbf{24} (1973/74), 529--542. 



\bibitem{Ni} Nikulin, V. V. 
{\it Integral symmetric bilinear forms and some of their geometric applications.\/}  
Izv. Akad. Nauk SSSR Ser. Mat. \textbf{43} (1979), no. 1, 111--177. 


\bibitem{Sh} Shioda, T. {\it The period map of Abelian surfaces.\/} 
J. Fac. Sci. Univ. Tokyo.\textbf{25} (1978), no. 1, 47--59.

\bibitem{S-M} Shioda, T.; Mitani, N. {\it Singular abelian surfaces and binary quadratic forms.\/} 
in "Classification of algebraic varieties and compact complex manifolds", pp. 259--287. 
Lecture Notes in Math., \textbf{412}, Springer, 1974.

\bibitem{St} Stark, H. M.
{\it On complex quadratic fields with class-number two.\/} 
Math. Comp. \textbf{29} (1975), 289--302.


\bibitem{Ta} Taylor, D. E.
{\it The geometry of the classical groups.\/} 
Sigma Series in Pure Mathematics, \textbf{9}. Heldermann Verlag, 1992.

\bibitem{Wa} Wall, C. T. C.
{\it Quadratic forms on finite groups, and related topics.\/} 
Topology \textbf{2} (1963) 281--298. 



\end{thebibliography}
\end{document}